\newtheorem{prop}{Proposition}[section]
\newtheorem{thm}[prop]{Theorem}
\newtheorem{lemma}[prop]{Lemma}
\newtheorem{cor}[prop]{Corollary}
\newtheorem{definition}[prop]{{Definition}}
\newtheorem{remark}[prop]{{Remark}}
\newtheorem{Example}[prop]{Example}
\newcommand{\cX}{{\mathcal X}}
\newcommand{\cQ}{{Q}}
\newcommand{\cO}{{\mathcal O}}
\newcommand{\cS}{{\mathcal S}}
\newcommand{\K}{{ K}}
\renewcommand{\aa}
\newcommand{\ov}{\overline}
\newcommand{\soc}{\operatorname{soc}\nolimits}
\newcommand{\rad}{\operatorname{rad}\nolimits}
\renewcommand{\dim}{\operatorname{dim}\nolimits}
\newcommand{\val}{\operatorname{val}\nolimits}
\newcommand{\Qg}{{\mathcal Q}_{\Gamma}}
\newcommand{\bc}{Brauer configuration}
\newcommand{\bca}{\bc\ algebra}
\definecolor{candyapplered}{rgb}{1.0, 0.03, 0.0}
\def\thm@space@setup{%
  \thm@preskip=0.7cm \thm@postskip=0.3cm
}
\begin{document}

\date{today}
\title[Special multiserial algebras and modules]
{Multiserial and special multiserial algebras and their representations}

\author[Green]{Edward L.\ Green}
\address{Edward L.\ Green, Department of
Mathematics\\ Virginia Tech\\ Blacksburg, VA 24061\\
USA}
\email{green@math.vt.edu}
\author[Schroll]{Sibylle Schroll}
\address{Sibylle Schroll\\
Department of Mathematics \\
University of Leicester \\
University Road \\
Leicester LE1 7RH \\
United Kingdom}
\email{schroll@le.ac.uk }

\subjclass[2010]{16G20, 
16G20  	
16D10,  	
16D50.  	
}
\keywords{ biserial, special biserial, radical cube zero, symmetric, multiserial, special multiserial, Brauer configuration algebra}
\thanks{This work was supported through the Engineering and Physical Sciences Research Council, grant number EP/K026364/1, UK and by the University of Leicester in form of a study leave for the second author. }

\begin{abstract}
In this paper we study multiserial and special multiserial algebras. These algebras are a natural generalization of biserial and special biserial algebras to algebras of wild representation type. 
We define a module to be  multiserial if its radical is the sum of uniserial modules
whose pairwise intersection is either 0 or
a simple module.  We show that all finitely generated modules over a special multiserial algebra are multiserial. In particular, this implies that, in analogy to special biserial algebras being biserial,  
special multiserial algebras are multiserial.  We then  show that 
the class of symmetric special multiserial algebras
coincides with the class of Brauer configuration algebras, where the latter are a  generalization of Brauer graph algebras. We end by showing 
that any symmetric algebra with radical cube zero is special multiserial and so, in particular, it is a Brauer configuration algebra.  
	\end{abstract}
\date{\today}
\maketitle


\section{Introduction}

In the study of the representation theory of finite dimensional algebras, the introduction of presentations of algebras by quiver and relations has 
led to major advances in the field.  Such presentations of algebras combined with another combinatorial tool 
that has proven powerful, the Auslander-Reiten translate and the 
Auslander-Reiten quiver, have led to many significant advances in the theory, to cite but a selection of these, see for example \cite{Bo, G, GR, R} or for an overview see \cite{ASS, SS1, SS2, ARS}. The addition of special 
properties such as semi-simplicity, self-injectivity, Koszulness, finite and tame representation type,  and finite global dimension, to name a few, have led 
 to structural results and classification theorems.  For example, the 
Artin-Wedderburn theorems for semi-simple algebras \cite{H}, 
the classification of 
hereditary algebras of finite representation type \cite{BGRS}, Koszul 
duality \cite{P}, classification of Nakayama algebras \cite{ARS}, covering theory of algebras  \cite{BG}, the study of tilted algebras \cite{BB, HR} and more recently, the study of cluster-tilted algebras beginning with \cite{BMR, CCS}. 

 Biserial and special biserial algebras have been the object of intense study at the end of the last century. Many aspects of the representation theory of these algebras is well-understood, for example, to cite but a few of the earlier results, the structure of the indecomposable representations \cite{Ri, GP, WW}, almost split sequences \cite{BR}, maps between indecomposable representations \cite{CB, K}, and the structure of the Auslander-Reiten quiver \cite{ESk}. Recently there has been renewed interested in this class of algebras. On the one hand this interest stems from its connecting with cluster theory.  In \cite{ABCP} the authors show that the Jacobian algebras of surface cluster algebras are gentle algebras, and hence a subclass of special biserial algebras. This class has been extensively studied since, see  \cite{CC, Ka} for examples of the most recent results. On the other hand with the introduction of $\tau$-tilting and silting theory \cite{AIR, AI}, there has been a renewed interested in special biserial algebras and symmetric special biserial algebras, in particular, see \cite{ AAC, MS, Z1, Z2}.

For self-injective algebras, Brauer tree and Brauer graph algebras have been useful in the classification of group algebras and blocks of group algebras of finite and tame representation type \cite{BD, Br, Hi, E} and the derived equivalence classification of self-injective algebras of tame representation type, see for example in  \cite{A, Sk} and the references within.  In these classifications  biserial and special biserial algebras have played an important role.

In this paper, we  study two classes of algebras, \emph{multiserial} and \emph{special multiserial} algebras introduced in \cite{VHW}, that are mostly of wild representation type. These algebras generalize biserial and special biserial algebras. In fact, they contain the classes of biserial and special biserial algebras and  we will see that they also contain the class  of symmetric algebras
with radical cube zero. One common feature of these classes is that their representation theory is largely controlled by the uniserial modules. The same is true for multiserial and special multiserial algebras.  

  We say that a module $M$ over some algebra is  \emph{multiserial} if the radical of $M$  is a sum of uniserial modules $U_1,\dots
U_l$ such that, if $i\ne j$, then $U_i\cap U_j$ is either $(0)$ or a 
simple module.  An algebra
$A$ is \emph{multiserial} if, as a right and left module, $A$ is multiserial. 	 For the definition of a \emph{special multiserial} algebra see Definition~\ref{def-specialmultiserial}. We note that the definition of  multiserial algebra as well as that of special multiserial algebra first appears in \cite{VHW}. Subsequently multiserial algebras and rings have been studied in \cite{KY} with a focus on hereditary multiserial rings, and with a slightly more general definition of multiserial algebra, they appear in \cite{J, M, BM}.

One of the main results of this paper is that any module  $M$ over a special multiserial algebra 
is multiserial.  As a consequence, a special multiserial algebra is a multiserial algebra, generalizing
the work of \cite{SW}  on special biserial algebras. Since special multiserial algebras are,
in general, wild, such a general result on the structure of modules is surprising.

\textbf{Theorem A.} \textit{Let $K$ be a field and let $A$ be a special multiserial $K$-algebra. Then every finitely generated $A$-module is a multiserial module.   }

 \textbf{Corollary} \textit{Any special multiserial algebra is a multiserial algebra.}

In Section \ref{arrowfree}, we introduce the concept of a ring having an
arrow-free socle.  We show that every self-injective finite dimensional algebra has an
arrow-free socle.  For an algebra with an arrow-free socle, we show that a
number of conditions are equivalent to
the algebra being special multiserial.

In \cite{GS}, Brauer configuration algebras were introduced. Their construction is based on combinatorial data, called a Brauer configuration, which generalizes Brauer graphs which in turn generalize Brauer trees. 
A Brauer configuration algebra is a finite dimensional symmetric algebra.
Our next result shows that, over an algebraically closed field,  
 Brauer configuration algebras and symmetric special multiserial algebras  coincide. 

\textbf{Theorem B.} \textit{Let $K$ be an algebraically closed field and let $A$ be a $K$-algebra. Then $A$ is a symmetric special multiserial algebra if and only if $A$ is a Brauer configuration algebra. } 

Another well-studied class of finite dimensional symmetric algebras is that 
of symmetric algebras with radical cube zero \cite{AIS, Be, ES1, ES2}. We prove that every symmetric algebra, over an algebraically closed field, with radical cube zero is a Brauer configuration algebra. 

\textbf{Theorem C.}  \textit{Let $K$ be an algebraically closed field. Then every symmetric $K$-algebra with Jacobson radical cube zero is a special multiserial algebra and, in particular, it is a Brauer configuration algebra. }

In proving Theorems A, B and C, we obtain many structural results on multiserial and special multiserial algebras. 


The paper is outline as follows. In Section~\ref{section special multiserial}, we define multiserial modules, multiserial algebras and  special multiserial algebras. We show that a module over
a special multiserial algebra is multiserial. In Section~\ref{arrowfree} we define algebras with arrow-free socle and show properties of such algebras. Section~\ref{symmetricspecialmultiserial} is on symmetric special multiserial algebras and  we prove that an algebra is symmetric special multiserial if and only if it is a Brauer configuration algebra. Finally, in Section~\ref{radicalcubezero}, we show that symmetric
algebras with radical cube zero are special multiserial and hence, they are Brauer configuration algebras.


\section{Modules over special multiserial algebras}\label{section special multiserial}


Let $K$ be  a field and let $A$ be a $K$-algebra. Unless explicitly stated otherwise, all modules considered are finitely generated right modules. Furthermore,  let $KQ/I $ be a finite dimensional algebra, for a quiver $Q$ and an admissible ideal $I$. Denote by $Q_0$ the set of vertices of $Q$ and by $Q_1$ the set of arrows in $Q$. By abuse of notation we sometimes view an element in $KQ$ as an element in $KQ/I$ if no confusion can arise. 

Recall that a $K$-algebra $A$ is \emph{biserial} if for every indecomposable projective
left or right module $P$, there are uniserial left or right modules
$U$ and $V$, such that $\rad(P) = U+ V$ and $U \cap V $ is either zero or simple.

The algebra $A$ is 
\emph{special biserial} if it is Morita equivalent to an algebra of the form $K\cQ/I$ where $K\cQ$
is a path algebra and $I$ is an admissible ideal such that the following properties hold

\begin{itemize}
\item[(S1)] For every arrow $a$ in $\cQ$ there is at most one arrow $b$ in $\cQ$
 such that $a b\notin I$  and at most one arrow $c$ in  $\cQ$  such that $c a\notin I$.
\item[(S2)] At every vertex $v$ in $Q$ there are at most two arrows in $\cQ$ starting at $v$ and at most two arrows ending at $v$.
\end{itemize}

In particular, property (S2) implies that at every vertex there are at most two incoming and two outgoing arrows. 
A special multiserial algebra, as defined below, does not satisfy this property instead it only satisfies property (S1).

 We now give the definitions of  the two main concepts studied in this paper, namely multiserial algebras and special multiserial algebras (Definitions ~\ref{def-multiserial} and ~\ref{def-specialmultiserial}). These algebras were first defined and studied in \cite{VHW}.

\begin{definition}\label{def-multiserial}{\rm 
Let $A$ be a  $\K$-algebra.
 \begin{itemize}
\item[1)] We say that a left or right
$A$-module is \emph{multiserial} if  $\rad(M)$ can be written as
a sum of uniserial modules $U_1\dots U_l$ such that,   if $i\ne j $, then
$U_i\cap U_j$ either $(0)$ or a simple module. 
 
\item[2)]  
The algebra $A$ is \emph{multiserial} if $A$, as a left or right $A$-module
is multiserial.	
\end{itemize}} 
\end{definition}


Furthermore, we remark that a multiserial algebra, that satisfies the additional property that  the radical is a sum of at most
two uniserial module whose intersection is zero or simple (on the left and
on the right)  is a biserial algebra.

 Recall from \cite{VHW} the definition  of special multiserial algebras. 

\begin{definition}\label{def-specialmultiserial} {\rm Let $A$ be a finite dimensional algebra.  
We say that $A$  is a \emph{special multiserial algebra} if $ A$ is Morita equivalent to a
quotient $K\cQ/I$ of a path algebra $K\cQ$
 by an admissible ideal $I$ such that the following property holds

\begin{itemize}
\item[(M)]For every arrow $a$ in $\cQ$ there is at most one arrow $b$ in $\cQ$ such that 
$ab \notin I$ and at most one arrow $c$ in  $\cQ $ such that $ca \notin I$. 
\end{itemize}
}
\end{definition}

We note that the definition of an algebra being special multiserial 
is left-right symmetric. The following is the  main result of this section. 

\begin{thm}\label{multiserial theorem}
Let $K$ be a field, $A$ a special multiserial $K$-algebra, and $M$ a finitely generated  $A$-module. Then $M$ is multiserial. 
\end{thm}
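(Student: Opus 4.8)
The plan is to reduce to a basic algebra $A = K\cQ/I$ satisfying property (M) and then to exhibit an explicit decomposition of $\rad(M)$ into uniserial submodules. Since uniseriality, simplicity, and the submodule lattice are all invariants of the module category, being a multiserial module is a Morita-invariant notion, so I may assume from the outset that $A = K\cQ/I$. Fix a minimal generating set $m_1,\dots,m_k$ of $M$ with $m_i = m_i e_{v_i}$. Then $\rad(M) = M\rad(A) = \sum_i \sum_a m_i a A$, where the inner sum runs over the arrows $a$ with source $v_i$; write $U_{i,a} = m_i a A$. My goal is to show that $\rad(M) = \sum_{i,a} U_{i,a}$ is a multiserial decomposition.

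The first main step is that each $U_{i,a}$ is uniserial, and this is where property (M) on the right is used. For a fixed arrow $a$, property (M) provides a unique arrow $b$ with $ab \notin I$, so $\rad(U_{i,a}) = U_{i,a}\rad(A) = m_i ab A$, which is again cyclic; iterating, the radical series of $U_{i,a}$ is the descending chain $m_i a A \supseteq m_i ab A \supseteq m_i abc A \supseteq \cdots$ governed by the unique ``string'' $a, ab, abc, \dots$ determined by $a$. Since each term $\rad^t(U_{i,a}) = m_i(ab\cdots)A$ is cyclic, its top is simple, so every consecutive quotient in the radical series is simple and $U_{i,a}$ is uniserial. The second step is the pairwise intersection condition. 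Because each $U_{i,a}$ is uniserial, every submodule of it is a term of its radical filtration; hence for any two of these modules $U = U_{i,a}$ and $U' = U_{j,a'}$, whenever $U \cap U' \neq 0$ it equals $\rad^s(U) = \rad^{s'}(U')$ for suitable $s, s'$, and it remains to prove this common submodule $N$ is simple. Here I would invoke property (M) on the left: it forces the nonzero paths of $A$ to organize into maximal strings in which each arrow has a unique immediate predecessor, so the strings emanating from two distinct starting data can share an arrow only by coinciding backward all the way to their source, which is impossible. Thus the strings indexing the composition series of $U$ and $U'$ are arrow-disjoint, and any identification in $K\cQ/I$ between their elements can only come from a binomial socle relation, forcing $N$ to be the common simple socle.

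The main obstacle is precisely this last step. Unlike the monomial case, the admissible ideal $I$ may contain binomial relations $p - \lambda q \in I$, and these are exactly what produce the nonzero intersections; the work is to show that such relations can only identify the bottom composition factors, equivalently that $\rad^s(U) = \rad^{s'}(U')$ with this submodule of length $\geq 2$ cannot occur. I would establish this by comparing the unique continuations of the two arrow-disjoint strings and showing that an identification above the socle would propagate downward inconsistently with (M). Once the three ingredients are in place, namely the reduction to a basic algebra, the uniseriality of each $U_{i,a}$, and the intersection analysis, the decomposition $\rad(M) = \sum_{i,a} U_{i,a}$ is the required multiserial decomposition, and hence $M$ is multiserial.
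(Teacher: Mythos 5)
Your first step (each $m_iaA$ is uniserial, via condition (M)) is correct and matches the paper's Lemma~\ref{lem-uniser}, and the Morita reduction is unproblematic. The genuine gap is in the second step: it is simply false that, for a fixed minimal generating set $m_1,\dots,m_k$, the submodules $U_{i,a}=m_iaA$ pairwise intersect in $0$ or a simple module, and the obstruction is not the one you identify. Take $A=K\cQ$ with $\cQ\colon 1\xrightarrow{a}2\xrightarrow{b}3\xrightarrow{c}4$ and $I=(0)$; this algebra is hereditary, Nakayama, and trivially special multiserial, and $I$ contains no binomial relations whatsoever. Let $M=(e_1A\oplus e_2A)/N$ with $N=(ab,-b)A$. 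Since $N\subseteq\rad(e_1A\oplus e_2A)$, the images $m_1,m_2$ of $(e_1,0)$ and $(0,e_2)$ form a minimal generating set of $M$, yet in $M$ one has $m_2b=m_1ab$, hence $m_2bA=m_1abA\subseteq m_1aA$ with $\ell(m_2bA)=2$. Thus $m_1aA\cap m_2bA$ has length $2$, neither zero nor simple, and your decomposition of $\rad(M)$ is not multiserial (here one must discard the redundant summand $m_2bA$; note also that the ``strings'' $a,ab,abc$ and $b,bc$ are not arrow-disjoint --- one is a tail of the other). The point is that identifications $m_1p=m_2q$ above the socle are imposed by the presentation of $M$, not by relations in $I$, so no analysis of binomial socle relations or of propagation along strings in $K\cQ/I$ can exclude them. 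The correct conclusion is not that such identifications are impossible, but that when they occur the collection of uniserial submodules must be changed.

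That change is precisely the content of the paper's proof and is absent from your proposal. The paper considers all short exact sequences $0\to L\to\bigoplus_iU_i\to\rad(M)\to 0$ in which the $U_i$ are uniserial submodules summing to $\rad(M)$ and inducing an isomorphism on tops (the set $\cS_M$, nonempty by Lemma~\ref{generators}), and minimizes $\ell(L)$. The key step, Lemma~\ref{support lemma}, replaces a generator $u_j$ by $u_j+\frac{\lambda_i}{\lambda_j}u_iq$ --- genuinely altering the uniserial submodules --- to shrink the support of elements of $L$; together with Lemma~\ref{comparable lemma} this shows that for a minimal sequence any $x\in L\setminus\soc(L)$ would give $xa\in L\cap U_i$ for a single $i$, and quotienting by $L\cap U_i$ would produce a sequence in $\cS_M$ with strictly smaller kernel, a contradiction. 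Hence the minimal $L$ is semisimple, and Lemma~\ref{lem: socle-lemma} converts this into the intersection condition. Your argument fixes the $U_{i,a}$ once and for all and therefore cannot reach this conclusion; the mechanism for modifying the decomposition is the heart of the proof and needs to be supplied.
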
 

Theorem~\ref{multiserial theorem} has as an immediate consequence the following corollary.

\begin{cor}  Let $A$ be a special multiserial $K$-algebra. Then $A$ is a multiserial algebra.
\end{cor}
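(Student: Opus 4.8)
The plan is to obtain the corollary directly from Theorem~\ref{multiserial theorem} by taking the module $M$ to be the regular representation of $A$. By Definition~\ref{def-multiserial}, asserting that $A$ is a multiserial algebra means precisely that $A$ is a multiserial module over itself, which I would verify both for the right regular module $A_A$ and for the left regular module ${}_AA$; thus the whole problem reduces to invoking the theorem twice, once on each side.

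First I would dispose of the right-hand side. Since $A$ is finite dimensional, $A_A$ is a finitely generated right $A$-module, so Theorem~\ref{multiserial theorem} applies verbatim and yields that $\rad(A_A)$ is a sum of uniserial right submodules whose pairwise intersections are $(0)$ or simple; that is, $A_A$ is multiserial.

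For the left-hand side I would pass to the opposite algebra. As noted just after Definition~\ref{def-specialmultiserial}, the special multiserial condition is left-right symmetric, so $A\op$ is again special multiserial (property~(M) simply interchanges its two clauses under $A \mapsto A\op$). Since left $A$-modules coincide with right $A\op$-modules, the left regular module ${}_AA$ becomes the finitely generated right regular module $(A\op)_{A\op}$, to which Theorem~\ref{multiserial theorem} again applies; hence ${}_AA$ is multiserial. Having checked the condition on both sides, I would conclude that $A$ is a multiserial algebra.

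The substantive difficulty is entirely absorbed into Theorem~\ref{multiserial theorem}; within the corollary there is no genuine obstacle, the only point requiring attention being the two-sided nature of the definition, which is handled cleanly by the left-right symmetry of the special multiserial property.
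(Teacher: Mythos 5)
Your proposal is correct and matches the paper's intent exactly: the paper declares the corollary an immediate consequence of Theorem~\ref{multiserial theorem}, obtained by applying the theorem to the regular module, with the left-hand case handled by the left--right symmetry of condition (M) (a point the paper itself invokes when reducing the theorem to right modules). Your explicit passage to $A\op$ merely spells out what the paper leaves implicit.
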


In the case that $A$ is a biserial algebra, we obtain as a Corollary to Theorem~\ref{multiserial theorem} the following result due to Skowro\'nski and Waschb\"usch.

\begin{cor}\cite{SW}\label{biserialisspecialbiserial} Let $A$ be a special biserial $K$-algebra. Then $A$ is a biserial algebra.  
\end{cor}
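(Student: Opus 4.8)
The plan is to deduce the statement from Theorem~\ref{multiserial theorem} together with the one extra axiom, (S2), that a special biserial algebra satisfies beyond those of a special multiserial algebra. First observe that property (S1) in the definition of special biserial is word-for-word property (M); hence every special biserial algebra is in particular special multiserial. By Theorem~\ref{multiserial theorem}, and the corollary that a special multiserial algebra is multiserial, the algebra $A$ is therefore multiserial: for each indecomposable projective $P$, the radical $\rad(P)$ is a sum of uniserial modules with pairwise intersections zero or simple. In view of the remark following Definition~\ref{def-multiserial}, it then suffices to improve this to a decomposition of $\rad(P)$ into at most two uniserial summands whose intersection is zero or simple, on both the left and the right.

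To this end I reduce, using Morita invariance, to the case $A = K\cQ/I$ with $I$ admissible satisfying (S1)$=$(M) and (S2). Consider a right indecomposable projective $P_v = e_v A$ at a vertex $v$. Its radical is generated by the arrows with source $v$, so $\rad(P_v) = \sum_{a} aA$, the sum ranging over arrows $a$ with source $v$. Property (M) shows each cyclic submodule $aA$ is uniserial: at every step a nonzero path beginning with $a$ has a unique arrow by which it may be continued without landing in $I$, so the submodules of $aA$ form a chain. Property (S2) bounds the number of arrows out of $v$ by two, whence $\rad(P_v)$ is a sum of at most two uniserial modules $aA$ and $bA$. The left-hand statement is entirely symmetric: it uses the half of (S2) bounding the arrows ending at $v$ together with the left-right symmetry of (M), equivalently one applies the argument to $A\op$.

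It remains to check that, for two distinct arrows $a \ne b$ out of $v$, the intersection $aA \cap bA$ is zero or simple; this is the only point requiring real work, and it is exactly where (S1) is used rather than merely the multiserial conclusion. Since $aA$ is uniserial, any nonzero submodule, and in particular $aA \cap bA$ if it is nonzero, contains the simple socle $\soc(aA)$, and likewise contains $\soc(bA)$; being itself uniserial, $aA \cap bA$ then has these as a common simple socle. The crucial observation is that (S1) makes nonzero paths deterministic in both directions: if two nonzero paths with distinct initial arrows were to agree up to a scalar in $K\cQ/I$ while still being extendable, then applying (S1) to their common continuation would force their last arrows to coincide, and peeling off arrows one at a time would force the whole paths, hence their initial arrows, to coincide, a contradiction. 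Consequently any coincidence between $aA$ and $bA$ can occur only at the socle, so $aA \cap bA$ is at most simple. With the at-most-two uniserial decomposition from the previous paragraph and this intersection property holding on both sides, the remark following Definition~\ref{def-multiserial} yields that $A$ is biserial. The main obstacle, as indicated, is precisely this last intersection estimate; the rest is a direct translation of Theorem~\ref{multiserial theorem} through the extra axiom (S2).
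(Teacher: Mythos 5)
Your overall strategy is the one the paper intends: (S1) is condition (M), so $A$ is special multiserial, Theorem~\ref{multiserial theorem} makes every module (in particular every indecomposable projective) multiserial, and (S2) should cut the number of uniserial summands of $\rad(P_v)$ down to two. However, in executing the last step you switch to the \emph{specific} uniserials $aA$ and $bA$ for the (at most two) arrows $a,b$ leaving $v$, and you then owe the assertion that $aA\cap bA$ is zero or simple. This is a genuine gap. First, an element of $aA\cap bA$ is not a single path matched against a single path: since $aA$ is uniserial the intersection is $\overline{p_i(a)}A$ for some $i$, but on the $bA$ side its generator is a linear combination $\sum_j\mu_j\overline{p_j(b)}$, so the situation "two nonzero paths agree up to a scalar" that your peeling argument treats is not the general one. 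Second, even in that special situation the peeling only forces the two paths to coincide when they have \emph{equal} length; applying (M) backwards from a common extendable end, the shorter path is exhausted first and one lands in the configuration $p=uq$ with $u$ a nontrivial cycle at $v$, in which the initial arrows of $p$ and $q$ are different and no contradiction has been reached. Excluding that configuration needs a further argument (for instance, from $\bar u\bar q=\lambda\bar q$ with $\lambda\ne0$ one gets $(\lambda e_v-\bar u)\bar q=0$ and $\lambda e_v-\bar u$ is a unit, forcing $\bar q=0$), and even then the linear-combination case remains. So the "only point requiring real work," as you call it, is exactly the point your proof does not close.

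The fix is to not use $aA$ and $bA$ at all, which is how the paper reads the corollary off Theorem~\ref{multiserial theorem}. The uniserials $U_1,\dots,U_l$ produced in the proof of that theorem satisfy property P3), i.e.\ $\bigoplus_iU_i/\rad(U_i)\cong\rad(P_v)/\rad^2(P_v)$; hence $l$ equals the number of arrows of $\cQ$ starting at $v$, which is at most two by (S2), and the theorem already guarantees that $U_1\cap U_2$ is zero or simple. (Note these $U_i$ need not be $aA$ and $bA$: Lemma~\ref{support lemma} replaces generators $u_j$ by $u_j+\frac{\lambda_i}{\lambda_j}u_iq$, so the final uniserials may be of the form $(a+bq)A$.) Together with the remark following Definition~\ref{def-multiserial} and the left--right symmetry of (M), this gives biseriality directly. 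One small caveat worth stating explicitly in either version: the bound $l\le 2$ does not follow from the bare statement that $P_v$ is multiserial (Definition~\ref{def-multiserial} imposes no minimality on the number of uniserials), so one genuinely needs property P3) from inside the proof of Theorem~\ref{multiserial theorem}.
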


Before proving Theorem~\ref{multiserial theorem}, we present a series of Lemmas that we will use in its proof. We start with a very general Lemma.

\begin{lemma}\label{socle lemma}
Let $W, T, U, V$   be $\Lambda$-modules for a ring $\Lambda$ such that there is a commutative exact diagram 
$$\xymatrix{
0  \ar[r]   & W   \ar[r]^{\text{incl}}  \ar[d]  & T \ar[r]^f \ar[d]^g &   U \ar[r]  \ar[d]^= &   0 \\
0 \ar[r]     &\soc (V)   \ar[r]^{\text{incl}}  \ar[d] &  V \ar[r]      \ar[d]   & U   \ar[r]  &   0  \\  
& 0 & 0 \\
}$$
Then  $W \supseteq \soc(T)$. In particular, if $W$ is semisimple then $W = \soc(T)$. 
\end{lemma}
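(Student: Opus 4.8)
The plan is to show directly that every simple submodule of $T$ is contained in $W$, which gives $\soc(T) \subseteq W$; the "in particular" clause then follows immediately, since a semisimple $W$ is by definition contained in $\soc(T)$.

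First I would record two facts read off from the diagram. By exactness of the top row, $W = \Ker(f)$. By exactness of the bottom row, the image of the inclusion $\soc(V) \hookrightarrow V$ equals the kernel of the map $V \to U$; in particular every simple submodule of $V$ lies in $\soc(V)$ and is therefore killed by $V \to U$. Commutativity of the right-hand square, where the vertical map on $U$ is the identity, says precisely that $f$ factors as $T \xrightarrow{g} V \to U$.

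Next I would take an arbitrary simple submodule $S \subseteq T$ and argue by contradiction that $f(S) = 0$. Suppose $f(S) \neq 0$. Since $f = (V \to U)\circ g$, the restriction $g|_S$ is nonzero, and because $S$ is simple this forces $g|_S$ to be injective, so $g(S)$ is a simple submodule of $V$. By the second fact above, $g(S) \subseteq \soc(V) = \Ker(V \to U)$, whence $f(S) = (V \to U)(g(S)) = 0$, contradicting $f(S) \neq 0$. Therefore $f(S) = 0$, i.e. $S \subseteq \Ker(f) = W$. As $\soc(T)$ is the sum of all simple submodules of $T$, this yields $\soc(T) \subseteq W$.

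Finally, for the last sentence, if $W$ is semisimple then, being a submodule of $T$, it is contained in $\soc(T)$; together with the inclusion just proved this gives $W = \soc(T)$. I expect the only point requiring care to be the bookkeeping between the two roles of $\soc(V)$ — as the literal socle and as the kernel of $V \to U$ — but these coincide by exactness of the bottom row, so there is no genuine obstacle and the whole argument reduces to the short diagram chase above.
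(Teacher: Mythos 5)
Your proof is correct and is essentially the paper's own argument: both take a simple submodule $S\subseteq T$, assume $f(S)\neq 0$, and derive a contradiction from the fact that $g(S)$ is a simple submodule of $V$ and hence lies in $\soc(V)=\Ker(V\to U)$. You merely spell out a few steps (that $W=\Ker(f)$ and that $g|_S$ is injective) that the paper leaves implicit.
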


\begin{proof}
Let $S$ be a simple submodule of $T$. Suppose that $f(S) \neq 0$.
Then $g(S) \not\subseteq \soc(V)$. But $g(S)$ is a simple submodule of $V$, and hence in $\soc(V)$, a
contradiction.
\end{proof}

From now on let $A = KQ/I$ be a special multiserial algebra. For $a,b \in Q_1$ such that  the vertex at which $a$ ends   equals the vertex
at which  $b$ starts, we use the convention that $ab$ is the path starting with $a$ followed by $b$.   If $M$ is a right
$A$-module, we say that an element $m\in M$ is \emph{right uniform}, if there is some vertex  $v \in Q_0$ such that $m e_v = m$ where $e_v$ is the trivial path at vertex $v$. 

By the left-right symmetry of the definition of special multiserial  it is sufficient to  prove Theorem \ref{multiserial theorem} for right modules.  The
following result was proved, in the special case of modules of the form $aA$ where $a$ is an arrow, in \cite{VHW}.

\begin{lemma}\label{lem-uniser}
Let $M$ be an $A$-module and let $m \in M$ be right
uniform. For $a \in Q_1$, the  module generated by $ma$ is uniserial.
\end{lemma}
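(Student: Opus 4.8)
The plan is to induct on $\dim_K N$, where $N := (ma)A$ is the cyclic module in question, and to show that $\rad N$ is again a module of exactly the shape covered by the lemma, so that the inductive hypothesis applies. First I would dispose of the degenerate case. Writing $v$ for the vertex with $me_v = m$, if $a$ does not start at $v$ then $ma = m e_v a = 0$ and $N = 0$ is vacuously uniserial; otherwise $ma = ma\,e_w$ is right uniform at the terminal vertex $w$ of $a$, and I may assume $ma \neq 0$.

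The first key observation is that $N$ has a simple top. Since $ma$ is right uniform at $w$, the map $A \to N$, $\alpha \mapsto (ma)\alpha$, factors through the indecomposable projective $e_w A$, whose top is the simple module $S_w$. As $N \neq 0$, Nakayama gives $\rad N \subsetneq N$, and the top of $N$ is a nonzero quotient of $S_w$, hence equal to $S_w$; in particular $\rad N$ is the unique maximal submodule of $N$.

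The second, and main, computation is to identify $\rad N$. Using $\rad N = N\cdot\rad A = (ma)\,\rad A$ together with the fact that $\rad A$ is spanned as a right ideal by the arrows (each path of positive length begins with an arrow), I obtain $\rad N = \sum_{b' \in Q_1}(ma)b'A = \sum_{b' \in Q_1} m(ab')A$. Because $M$ is a module over $A = KQ/I$, every path lying in $I$ annihilates $m$, so each summand with $ab' \in I$ vanishes. Property (M) applied to the arrow $a$ on the right now says there is at most one arrow $b$ with $ab \notin I$. Hence either $\rad N = 0$, in which case $N = S_w$ is already simple, or $\rad N = m(ab)A = (ma)b\,A$, a cyclic module generated by the right uniform element $ma$ multiplied by the single arrow $b$ — precisely the form treated by the lemma. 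Since $\rad N \subsetneq N$, we have $\dim_K \rad N < \dim_K N$, so the induction is well founded.

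Finally I would assemble these facts. By the inductive hypothesis $\rad N = (ma)b\,A$ is uniserial, so its submodules form a chain. Any proper submodule $L \subsetneq N$ satisfies $L + \rad N \neq N$ (else Nakayama forces $L = N$), so by maximality of $\rad N$ we get $L \subseteq \rad N$. Thus the submodules of $N$ are $N$ itself together with the (totally ordered) submodules of $\rad N$, so they form a chain and $N$ is uniserial. I expect the only genuine obstacle to be the radical computation: one must be careful that it is property (M) for the arrow $a$ in the \emph{right}-multiplication direction, combined with the fact that paths in $I$ kill $m$, that collapses the sum over all arrows $b'$ to the single surviving term $(ma)b$; everything else is the standard Nakayama/simple-top bookkeeping.
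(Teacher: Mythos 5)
Your proof is correct and is essentially the paper's argument: the paper also observes that, by condition (M), at each stage there is at most one arrow $b$ with $mab \neq 0$, then $mabc\neq 0$, and so on, concluding that $maA$ is uniserial "continuing in this fashion." You have simply made that iteration precise as an induction on $\dim_K (ma)A$, with the radical computation $\rad((ma)A) = (ma)bA$ and the Nakayama/simple-top bookkeeping spelled out explicitly.
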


\begin{proof}
If $ma = 0$ the result follows. If $ma \neq0$ there exists at most one arrow $b$ such that $mab \neq 0$. If $mab \neq 0$, there exists at most one arrow $c$ such that $mabc \neq0$.    Continuing in this fashion we see that the submodule generated by $ma$ is uniserial. 
\end{proof}

We note that for a general algebra $\Lambda$ if $M$ is a  $\Lambda$-module such that  $\rad^2(M) = 0$  then $\rad(M)$, being semisimple implies
$M$ is a multiserial module.

\begin{lemma}\label{generators}
Let $M$ be an $A$-module with $\rad^2(M) \neq 0$. Then there exist right uniform elements  $u_1, \ldots, u_t$ in $\rad(M) \setminus \rad^2(M)$ such that 

1) $u_i = m_i a_i$ for some right uniform elements $m_i \in M \setminus \rad(M)$ and $a_i \in Q_1$, 

2) $u_i A$ is a uniserial module, 

3) $\sum_{i = 1}^t u_i A = \rad(M)$,

4) $\rad(M)/ \rad^2(M) \simeq \bigoplus \pi(u_i) (A / \rad(A))$ where $\pi: \rad(M) \to \rad(M)/ \rad^2(M)$ is the canonical surjection. 
\end{lemma}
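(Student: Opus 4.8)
The plan is to produce the $u_i$ as products $m_j a$ of a fixed minimal generating set of $M$ with the arrows of $Q$, and then to thin this collection down to a basis of the top of $\rad(M)$. First I would fix generators: since $M/\rad(M)$ is semisimple, I choose a $K$-basis of it consisting of right uniform vectors and lift these to right uniform elements $m_1,\dots,m_s \in M\setminus\rad(M)$ that generate $M$. For each $j$ and each arrow $a\in Q_1$ starting at the vertex $v_j$ with $m_j e_{v_j}=m_j$, the element $m_j a$ is again right uniform and lies in $\rad(M)$, and by Lemma~\ref{lem-uniser} the cyclic module $m_j a A$ is uniserial. Moreover, because $\rad(M)=M\rad(A)$ and $\rad(A)=\sum_{a\in Q_1} aA$ as a right ideal, we have $\rad(M)=\sum_{j,a} m_j a A$; this is the raw material for conditions (1)--(3).

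The next step is to record the identity $\rad^2(M)=\rad(M)\rad(A)=\rad(\rad(M))$, so that $\rad(M)/\rad^2(M)$ is precisely the top of $\rad(M)$ and is semisimple. Applying $\pi$ to the sum above shows that the elements $\pi(m_j a)$ span $\rad(M)/\rad^2(M)$ as a $K$-vector space, and since each $\pi(m_j a)$ is right uniform and annihilated by $\rad(A)$, it generates the at most one-dimensional semisimple submodule $\pi(m_j a)(A/\rad(A))$. I would then select from the family $\{m_j a\}$ a subfamily $u_1,\dots,u_t$, written $u_i=m_i a_i$, whose images $\pi(u_i)$ form a $K$-basis of $\rad(M)/\rad^2(M)$. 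By construction each $u_i\in\rad(M)\setminus\rad^2(M)$ is right uniform with $m_i\in M\setminus\rad(M)$ right uniform and $a_i\in Q_1$, and $u_iA$ is uniserial, which gives (1) and (2); the decomposition $\rad(M)/\rad^2(M)\cong\bigoplus_i \pi(u_i)(A/\rad(A))$ is exactly (4), since linearly independent uniform vectors generate independent simple summands over the semisimple ring $A/\rad(A)$.

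Finally, to obtain (3) I would invoke Nakayama's lemma for the finitely generated module $\rad(M)$: since the images $\pi(u_i)$ generate $\rad(M)/\rad(\rad(M))=\rad(M)/\rad^2(M)$, the elements $u_i$ themselves generate $\rad(M)$, that is, $\sum_{i=1}^t u_i A=\rad(M)$. The main point to get right, and the step that ties the conditions together, is the reconciliation of (3) with (4): one wants the chosen elements to be simultaneously a generating set for $\rad(M)$ and a basis for its top. This is exactly what the identity $\rad^2(M)=\rad(\rad(M))$ together with Nakayama's lemma delivers, so that passing from the spanning family $\{m_j a\}$ to a basis of the top loses no generators. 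The remaining care is purely bookkeeping with right uniform elements, ensuring that each retained $u_i$ is genuinely of the form $m_i a_i$ with $m_i$ a top generator.
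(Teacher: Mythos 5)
Your proposal is correct and follows essentially the same route as the paper: take right uniform generators of $M$, multiply by arrows to get a right uniform generating family of $\rad(M)$ whose cyclic submodules are uniserial by Lemma~\ref{lem-uniser}, and thin it to a subfamily whose images form a minimal generating set (equivalently, here, a basis) of the semisimple top $\rad(M)/\rad^2(M)$. The paper leaves parts (3) and (4) as "clear from the construction," and your Nakayama argument and the one-dimensionality of $\pi(u_i)(A/\rad(A))$ are exactly the details being elided.
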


\begin{proof}
 Choose a set of right uniform generators of $M$. Right multiplying these elements by arrows in $Q$ yields a generating set of   
$\rad(M)$ consisting of right uniform elements. 

Applying $\pi$ to this set we get a generating set of $\rad(M) / \rad^2(M)$, so we may select $u_1, \ldots, u_t$ so that $\pi(u_1), \ldots, \pi(u_t)$ is a minimal generating set of $\rad(M) / \rad^2(M)$ which is a
semi-simple module. 

Each $u_i = m_ia_i$ for some right uniform $m_i \in M \setminus \rad(M)$ and $a_i  \in Q_1$. By Lemma \ref{lem-uniser}
$u_i A$ is a uniserial module.  Parts (3) and (4) are clear from the
construction.
\end{proof}

Define the following partial order on the set of paths in $Q$. For $p, p'$ paths in $Q$, 
we say $p \geq p'$, if $p = q p'$  for some path $q$ in $Q$. The following Lemma follows directly from condition (M). 

\begin{lemma}\label{comparable lemma} Let $p$ and $p'$ be paths in $Q$ and $a\in Q_1$.
If $pa \neq 0$ and $p'a \neq 0$ then either $p \geq p'$ or $p' >p$. Hence $p$ and $p'$ are comparable. 
\end{lemma}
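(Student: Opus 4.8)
The plan is to prove the statement by induction on $n=\min(\ell(p),\ell(p'))$, where $\ell$ denotes the length of a path, using condition (M) to force $p$ and $p'$ to agree arrow-by-arrow as one reads backwards from the arrow $a$. The single structural fact I would record at the outset is that any subpath of a nonzero path is again nonzero: since $I$ is an ideal, if a subpath lay in $I$ then the whole path would lie in $I$ as well. In particular, from $pa\neq 0$ we already get $p\neq 0$, and likewise $p'\neq 0$.

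For the base case $n=0$, one of the two paths, say $p'$, is trivial, $p'=e_v$ for some vertex $v$. Then $p'a=e_v a\neq 0$ forces $a$ to start at $v$, while $pa\neq 0$ forces $p$ to end at $v$, so $p=p\,e_v$ and hence $p\geq e_v=p'$ (if $p$ is also trivial then $p=e_v=p'$). For the inductive step $n\geq 1$, both paths have length at least one, so I would write $p=\bar p\,b$ and $p'=\bar p'\,b'$ with $b,b'$ the terminal arrows. Because $pa$ and $p'a$ are nonzero, their subpaths $ba$ and $b'a$ are nonzero; condition (M) asserts that at most one arrow $c$ satisfies $ca\notin I$, and since both $b$ and $b'$ are such arrows we conclude $b=b'$. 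Now $\bar p\,b=p\neq 0$ and $\bar p'\,b=p'\neq 0$, and $\min(\ell(\bar p),\ell(\bar p'))=n-1$, so the induction hypothesis applied with the arrow $b$ in place of $a$ yields that $\bar p$ and $\bar p'$ are comparable. Appending the common last arrow $b$ to both then gives comparability of $p$ and $p'$: if $\bar p=q\,\bar p'$ then $p=\bar p\,b=q\,\bar p'\,b=q\,p'$, so $p\geq p'$, and symmetrically in the other case.

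I do not expect a genuine obstacle here, consistent with the remark that the Lemma follows directly from (M); the only points requiring care are bookkeeping of the left-right conventions and the base case. Concretely, I would keep straight that $p\geq p'$ means $p'$ is a \emph{terminal} segment of $p$, which is exactly the end at which $a$ is attached in $pa$ and $p'a$, so that condition (M) — a statement about the arrow immediately preceding a given arrow — is the right tool, and that trivial paths are handled cleanly by the base case rather than being folded awkwardly into the recursion.
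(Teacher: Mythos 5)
Your proof is correct and is essentially the argument the paper has in mind: the paper simply asserts that the lemma "follows directly from condition (M)," and your induction makes that precise by using (M) to identify the terminal arrows of $p$ and $p'$ one at a time, together with the observation that subpaths of paths not in $I$ are not in $I$. The base case and the handling of the order convention ($p \geq p'$ meaning $p'$ is a terminal segment of $p$, the end at which $a$ is attached) are both handled correctly.
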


For $M$ an $A$-module, let $\ell(M)$ be the number of non-zero terms in a composition series of $M$. 
We introduce the set of short exact sequences satisfying the properties P1) - P3) which are defined below.  
Let

 $$\begin{array}{ll} \cS_M = \{ \varepsilon: 0 \to L \to \bigoplus_i U_i \to 
\rad(M) \to 0  \sloppy  \mid  & \varepsilon  \mbox{ is a short exact sequence}\\
& \mbox{satisfying P1) - P3)
  defined below} \}.
\end{array}$$

\begin{enumerate}
\item[P1)]  Each $U_i$ is a uniserial submodule of $\rad(M)$. 

\item[P2)] $\sum_i U_i = \rad(M)$.

\item[P3)] The map $\bigoplus U_i / \rad(U_i) \to \rad(M) /\rad^2(M)$ induced \sloppy from the map \linebreak  $\bigoplus U_i  \to \rad(M) $ is an isomorphism.
\end{enumerate}

Note that by Lemma~\ref{generators}, the set $\cS_M$ is not empty. Let \[\alpha = {\rm min}\{ \ell(L) \mid 0 \to L \to \bigoplus_i U_i \to \rad(M) \to 0 \in \cS \}\] and let 
$$\cS^*_M = \{ \varepsilon \in \cS_M \mid \ell(L) = \alpha \}.$$ 

 For a general ring $\Lambda$ and $\Lambda$-modules $N_1, \ldots, N_t$, the \emph{support of an element $n$}, where $n  = \sum  n_i \in \bigoplus_i N_i$, with $n_i \in N_i$, is the set of all $i$ such that $n_i\neq 0$ and is denoted by ${\rm supp}(n)$. The cardinality  $|{\rm supp}(n)|$ of  ${\rm supp}(n)$ is the number of components for which $ n_i \neq 0$. 

\begin{lemma}\label{support lemma}
Let $0 \to L \to \bigoplus_i U_i \to \rad(M) \to 0  \in \cS^*_M$, $x \in L$ with $x=  \sum \lambda_i u_i p_i$, $\lambda_i \neq 0 \in K$, $u_i \in U_i\setminus\rad(U_i)$  and $p_i$ a non-zero path in $A$, for all $i$. If for some $i$ and $j$ with $i \neq j$, $p_i \geq p_j$ then there exists, 
$0 \to L' \to \bigoplus_i U'_i \to \rad(M) \to 0  \in \cS^*_M$ such that the following 
diagram commutes 
$$\xymatrix{
0  \ar[r]   & L   \ar[r]  \ar[d]^g  & \bigoplus_i U_i  \ar[r] \ar[d]^f &   \rad(M) \ar[r]  \ar[d]^= &   0 \\
0 \ar[r]    & L'   \ar[r]   &  \bigoplus_i U'_i  \ar[r]         &  \rad(M)  \ar[r] &   0  \\  
}$$
where $g$ and $f$ are isomorphisms and where $| {\rm supp} (g(x)) | < | {\rm supp} (x) |$.
Note that if $x \notin \soc(L)$ then $g(x) \notin \soc(L')$.
\end{lemma}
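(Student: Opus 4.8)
The plan is to realize the required reduction as an elementary change of the uniserial decomposition. Since $p_i \geq p_j$, write $p_i = q p_j$ for a path $q$ and replace the summand $U_j$ by the uniserial module $U'_j = u'_j A$ generated by
$u'_j = u_j + \lambda_j^{-1}\lambda_i\, u_i q$, keeping $U'_k = U_k$ for $k \neq j$. The motivation for this choice is the identity $u'_j p_j = u_j p_j + \lambda_j^{-1}\lambda_i\, u_i q p_j = u_j p_j + \lambda_j^{-1}\lambda_i\, u_i p_i$, so that the new generator $u'_j$ absorbs both the $j$-th and the $i$-th terms of $x$. I will then let $f$ be the map induced by $u_j \mapsto u'_j - \lambda_j^{-1}\lambda_i\, u_i q$ and $u_k \mapsto u_k$ for $k \neq j$; a direct computation will show that in $f(x)$ the $i$-th coordinate cancels, whence $\mathrm{supp}(f(x)) \subseteq \mathrm{supp}(x)\setminus\{i\}$.

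First I would record the basic geometry. We may take each $u_i$ to be right uniform, say at a vertex $w_i$ (as in Lemma~\ref{generators}), and note that $u_i p_i \neq 0$, $u_j p_j \neq 0$ force $p_i, p_j$ to start at $w_i, w_j$; since $p_i = q p_j$, the path $q$ runs from $w_i$ to $w_j$, so $u_i q$ is right uniform at $w_j$, the vertex of $u_j$. Hence $u'_j$ is right uniform at $w_j$ and, by Lemma~\ref{lem-uniser}, $U'_j = u'_j A$ is uniserial. Next I would check that the resulting sequence $0 \to L' \to \bigoplus_k U'_k \to \rad(M) \to 0$ lies in $\cS_M$: P1) is immediate; P2) holds because $u_j = u'_j - \lambda_j^{-1}\lambda_i u_i q \in U'_j + U_i$, so $\sum_k U'_k \supseteq \sum_k U_k = \rad(M)$; and P3) holds since, when $q$ has positive length, $u_i q \in \rad^2(M)$ and thus $\pi(u'_j) = \pi(u_j)$, while if $q$ is trivial the passage $\pi(u_j) \mapsto \pi(u_j) + \lambda_j^{-1}\lambda_i \pi(u_i)$ is an invertible change of basis of $\rad(M)/\rad^2(M)$.

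The main obstacle is the well-definedness of $f$ on the summand $U_j = u_j A$: for the assignment $u_j \mapsto u'_j - \lambda_j^{-1}\lambda_i u_i q$ to extend to an $A$-homomorphism I must show $\mathrm{ann}_A(u_j) \subseteq \mathrm{ann}_A(u_i q)$, equivalently that $u_j \mapsto u_i q$ defines a homomorphism $U_j \to U_i$. This is where condition (M) is used: as $u_j p_j \neq 0$ and $u_i q p_j \neq 0$, both $u_j$ and $u_i q$ survive multiplication by the first arrow of $p_j$, and by (M) together with Lemma~\ref{lem-uniser} the uniserial modules $u_j A$ and $u_i q A$ are governed by the same uniquely determined sequence of arrows; consequently their annihilators are comparable, so the containment fails only if $u_i q A$ is strictly the longer. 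I expect this to be ruled out by exploiting the freedom in the statement to choose which comparable pair $(i,j)$ in $\mathrm{supp}(x)$ to reduce, together with the minimality of $\ell(L)$ in $\cS^*_M$ (the reversed containment would yield a decomposition in $\cS_M$ with strictly smaller $\ell(L)$). Granting $\mathrm{ann}_A(u_j) \subseteq \mathrm{ann}_A(u_i q)$, the map $f : \bigoplus_k U_k \to \bigoplus_k U'_k$ is well defined, is the identity on the summands $k \neq j$ and carries $u_j$ to an element mapping to $u_j$ in $\rad(M)$, so $f$ is an isomorphism over $\rad(M)$ (block triangular with unipotent correction); in particular $L' = f(L)$, $g := f|_L$ is an isomorphism, and $\ell(L') = \ell(L) = \alpha$, so the new sequence lies in $\cS^*_M$.

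It then remains to run the computation and the socle remark. Writing $f(x) = \sum_k \lambda_k f(u_k) p_k$ and using $f(u_j) = u'_j - \lambda_j^{-1}\lambda_i u_i q$ with $q p_j = p_i$, the $i$-th coordinate of $f(x)$ equals $\lambda_i u_i p_i - \lambda_j(\lambda_j^{-1}\lambda_i) u_i q p_j = \lambda_i u_i p_i - \lambda_i u_i p_i = 0$, while all other coordinates only involve indices already in $\mathrm{supp}(x)$; hence $|\mathrm{supp}(g(x))| < |\mathrm{supp}(x)|$. Finally, since $g : L \to L'$ is an isomorphism it carries $\soc(L)$ onto $\soc(L')$, so $x \notin \soc(L)$ forces $g(x) \notin \soc(L')$, giving the last assertion.
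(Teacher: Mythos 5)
Your construction is exactly the paper's: the same replacement $u'_j = u_j + \lambda_j^{-1}\lambda_i\, u_i q$ with $U'_l=U_l$ for $l\ne j$, the same change-of-generators map $f$ (whose $i$-th output coordinate picks up the correction $-\lambda_j^{-1}\lambda_i u_i q x_j$), and the same cancellation computation showing the $i$-th coordinate of $f(x)$ vanishes; your checks of P1)--P3) are extra detail consistent with the paper's one-line justification that ``$f$ just changes generating sets and hence is an isomorphism.'' So this is the same route, not a different one.

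The one place you depart from a complete argument is the step you explicitly leave open: the well-definedness of $f$, i.e.\ $\mathrm{ann}_A(u_j)\subseteq\mathrm{ann}_A(u_iq)$. You have correctly isolated the only nontrivial point (it is precisely what the paper's ``changes generating sets'' remark silently assumes, and it is equivalent to $\ell(u'_jA)=\ell(u_jA)$, i.e.\ to the new sequence actually landing in $\cS^*_M$ rather than merely in $\cS_M$). However, your sketched plan for closing it does not work as written: if the containment were reversed, so that $u_iqA$ is strictly longer than $u_jA$, then $\ell(U'_j)>\ell(U_j)$ and the modified sequence has a strictly \emph{larger} kernel, so minimality of $\ell(L)$ yields no contradiction in the direction you indicate; and the ``freedom to choose the comparable pair'' only helps in the degenerate case $p_i=p_j$, where one may let the longer of the two modules absorb the shorter -- when $p_i>p_j$ strictly the roles of $i$ and $j$ are forced. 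Two remarks that do help: first, since $u_jp_j\ne 0$ and $u_iqp_j\ne 0$, condition (M) applied to the first arrow $c$ of $p_j$ shows that $u_j$ and $u_iq$ are both right multiples of the unique arrow $d$ with $dc\notin I$; this is also what you need to invoke Lemma~\ref{lem-uniser} legitimately (that lemma applies to elements of the form (right uniform element)$\cdot$(arrow), not to arbitrary right uniform elements), and it shows the two uniserial modules share the same continuation sequence of arrows, so their annihilators are at least comparable. Second, when $|\mathrm{supp}(x)|=2$ the required equality of lengths is automatic, since then $u_ip_iA=u_jp_jA$ forces $\ell(u_iqA)-\ell(p_j)=\ell(u_jA)-\ell(p_j)$. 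For larger support the containment still requires an argument, so as it stands your write-up has a gap at this point -- albeit one located exactly where the paper's own proof is silent.
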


\begin{proof}
Let $p_i \geq p_j$ for some  $i$ and $j$.  Then $p_i = q p_j$ for some path
$q$. Set $u'_l = u_l$ for all $l \neq j$. Then set $u'_j = u_j + \frac{\lambda_i}{\lambda_j} u_i q$. 
 Let $U_l'=u_l'A$ for all $l$.  It is immediate that  $\oplus_lU_l=\oplus_lU'_l$.
In fact, $ \sum \gamma_l u_l x_l =
\sum \gamma_l u'_l x_l - \gamma_j  \frac{\lambda_i}{\lambda_j} u'_i qx_j$ where $\gamma_l \in K$ and $x_l$ are paths.
Define a map $f: \bigoplus U_l \to  \bigoplus U'_l$ given by $f( \sum \gamma_l u_l x_l) =
\sum \gamma_l u'_l x_l - \gamma_j  \frac{\lambda_i}{\lambda_j} u'_iq x_j$ where $\gamma_l \in K$
and $x_l$ are paths. 
 Then $f$ just changes generating sets and hence is an isomorphism. Now $f(x) = f( \sum \lambda_l u_l p_l) = \sum \lambda_l u'_l p_l -   \lambda_i u'_i p_i  = \sum_{l \neq i} \lambda_l u'_l p_l $. Hence $| {\rm supp} (f(x)) | < | {\rm supp} (x) |$.
Since $g$ is a restriction of $f$, $x \in L$ and  $g(x) \in L'$, we have $| {\rm supp} (g(x)) | < | {\rm supp} (x) |$. The statement on the socles follows from the fact that $g$ is an isomorphism.
\end{proof}

Our final lemma is the following:

\begin{lemma}\label{lem: socle-lemma}  Suppose there exists a short
exact sequence
\[
0\to L\to\bigoplus_iU_i\to \rad(M)\to 0
\]
 in $\cS_M$, in particular, this implies that the $U_i$ are uniserial submodules of $M$ satisfying {\rm P1) - P3)} above. Suppose further that 
 $L$ is semisimple.   Then $M$ is multiserial.
\end{lemma}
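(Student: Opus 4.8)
The plan is to use hypotheses P1) and P2) to reduce the statement to a single intersection condition, and then to extract that condition directly from the semisimplicity of $L$. By P1) each $U_i$ is a uniserial submodule of $M$, and by P2) we have $\rad(M)=\sum_i U_i$; so, comparing with Definition~\ref{def-multiserial}, to conclude that $M$ is multiserial it suffices to show that for $i\neq j$ the intersection $U_i\cap U_j$ is either $(0)$ or simple. Only P1) and P2) enter at this stage; P3) is needed merely to place the sequence in $\cS_M$.

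First I would fix $i\neq j$ and set $N=U_i\cap U_j$, a submodule of $M$. Since $N\subseteq U_i$ and $U_i$ is uniserial, $N$ is itself uniserial, hence cyclic, and it is $(0)$ or simple precisely when $\rad(N)=0$. The device linking $N$ to $L$ is a diagonal embedding: for $n\in N$ let $z_n\in\bigoplus_k U_k$ be the element with $n$ in the $i$-th component, $-n$ in the $j$-th component, and $0$ elsewhere. This is legitimate because $n$ lies in both $U_i$ and $U_j$, and under the map $\bigoplus_k U_k\to\rad(M)$ of the given sequence it is sent to $n-n=0$; thus $z_n\in L$ for every $n\in N$.

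The heart of the argument is then to play this embedding against the assumption that $L$ is semisimple, equivalently $\rad(L)=L\rad(A)=0$. Suppose, for contradiction, that $N$ is neither $(0)$ nor simple. Then $\rad(N)=N\rad(A)\neq 0$, so there exist an element $n\in N$ and an arrow $a\in Q_1$ with $na\neq 0$; moreover $na\in N$ because $N$ is a submodule. Computing componentwise, $z_n\cdot a=z_{na}\neq 0$, and $z_{na}\in L$ while $a\in\rad(A)$, so $z_{na}\in L\rad(A)$, contradicting $L\rad(A)=0$. Hence $\rad(N)=0$, that is, $U_i\cap U_j$ is $(0)$ or simple, and therefore $M$ is multiserial.

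I expect the only real content to be the diagonal embedding $n\mapsto z_n$ together with the observation that semisimplicity of $L$ says exactly that no such $z_n$ survives one step of the radical action; everything else is bookkeeping. The points to be careful about are that $na$ again lies in $N$ (so that $z_{na}$ is well defined and lands in $L$), and that the first arrow of any path witnessing $N\rad(A)\neq 0$ already witnesses it, which is what permits taking $a$ to be a single arrow rather than a longer path.
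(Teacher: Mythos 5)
Your argument is correct and is essentially the paper's own proof: both rest on the observation that $U_i\cap U_j$ embeds into the kernel $L$ (your explicit diagonal map $n\mapsto z_n$ is exactly what the paper means by ``$U_i\cap U_j$ is isomorphic to a submodule of the kernel''), combined with the fact that a nonzero non-simple submodule of a uniserial module has nonzero radical and so cannot sit inside a semisimple $L$. You merely spell out the details the paper leaves implicit.
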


\begin{proof}  
If $M$ is not a multiserial module, then for every choice of
uniserial modules $U_1,\dots, U_t$ such that $\sum_iU_i=\rad(M)$,
for some $i\ne j$,  $U_i\cap U_j$ is neither 0 nor a simple module.

But $U_i\cap U_j$ is isomorphic to a submodule of the
kernel of the canonical surjection $\bigoplus_iU_i\to \rad(M)$.
Hence,  since $U_i\cap U_j$ is a submodule of a uniserial
module,  it follows that $\rad(U_i\cap U_j)$ is nonzero.  Thus, the kernel of any
map $\bigoplus_iU_i\to \rad(M)$ is never semisimple.
The result follows. \end{proof}

\emph{Proof of Theorem~\ref{multiserial theorem}:} Without loss of
generality, we may assume that $M$ is indecomposable.
If $\rad^2(M) = 0 $, we have seen that the result is true. Assume that $\rad^2(M) \neq 0$. 
If there exists $0 \to L \to \bigoplus_i U_i \to \rad(M) \to 0 \in\ \cS^*_M$ such that $L$ is semi-simple 
then the result follows from Lemma \ref{lem: socle-lemma}.

Suppose no such $L$ exists, that is, $L$ is not semi-simple
for every short exact sequence in $\cS^*_M$. We will now show that this leads to a contradiction. Consider the set $$\cX = \{ x  \mid \mbox{ there is a s.e.s }  0 \to L \to \bigoplus_i U_i \to \rad(M) \to 0  \mbox{ in } \cS^*_M \mbox{ and } x \in L \setminus \soc(L)\}.$$
Let $\cX_{min} = \{ x \in \cX \mid |{\rm supp} (x)| \mbox{ is minimal in } \{{|\rm supp}(y)|, y \in \cX \}\}$. 

Let $x \in \cX_{min}$ where $0 \to L \to \bigoplus_i U_i \to \rad(M) \to 0$ and $x \in L \setminus \soc(L)$.   Then $x = \sum \lambda_i u_i p_i$
with $\lambda_i \in K$ and with $u_i \in U_i\setminus \rad(U_i)$. Note that by choice of $x$ the number of non-zero $\lambda_iu_ip_i $ is as small as possible.

By Lemma~\ref{support lemma} and minimality of $|{\rm supp}(x)|$, all the $p_i$ are  incomparable  in the partial order on the paths in $KQ$ defined earlier. Since $x \notin \soc(L)$, there exists an arrow $a \in Q_1$ such that $xa \neq 0$.

Since $xa \neq 0$ there exists $u_i p_i a \neq 0$ for some $i$. If  for some $j \neq i$, $u_j p_j a \neq 0$ then $p_ia \neq 0$ and $p_j a\neq 0$ and hence  by Lemma~\ref{comparable lemma}, $p_i$ is comparable to $p_j$.  Then by  Lemma~\ref{support lemma} we obtain a contradiction to the minimality of $|{\rm supp}(x)|$. Hence $xa = \lambda_i u_i p_ia$. So we have $xa \in L \cap U_i$  for exactly one $i$.

Now consider 
 $$\xymatrix{&& 0 & 0  && \\
(1) & 0  \ar[r]   & L/(L \cap U_i)   \ar[r]  \ar[u]  & U_i / (L \cap U_i) \oplus (\bigoplus_{j \neq i} U_j)   \ar[r] \ar[u] &   \rad(M) \ar[r]   &   0 \\
(2) & 0 \ar[r]    & L   \ar[r] \ar[u]  &  \bigoplus_i U_i  \ar[r]      \ar[u]   &  \rad(M)  \ar[r]  \ar[u]&   0  \\  
&& L \cap U_i \ar[r]^= \ar[u] &  L \cap U_i \ar[u] \\
&& 0 \ar[u] & 0 \ar[u] \\ 
}$$
Since the short exact sequence in line (2) is in $\cS^*_M$, clearly the one in line (1) is in $\cS_M$. But 
$\ell(L / (L \cap U_i) < \ell(L)$ thus contradicting the minimality of $L$ and so the assumption that $L$ is not semi-simple is false and the result follows. \hfill $\Box$

\subsection*{ More structural results on the uniserial modules over a special multiserial algebra.} Due to the importance of uniserial modules, we end this section with a few structural
results about such modules over a special multiserial algebra.   In Section~\ref{symmetricspecialmultiserial} we show that a symmetric special multiserial algebra is a Brauer configuration
algebra.  In this case, further  structural results on the uniserial
modules can be found in \cite{GS}.

Let $A=K\cQ/I$ be a finite
dimensional special multiserial
algebra,
let $J$ be the ideal in $K\cQ$ generated  by the arrows in $\cQ$, and let $N\ge 2$ be an
integer such that $J^N\subseteq I\subseteq J^2$. 
If $x\in K\cQ$, we denote its image in $ A$ 
  by $\bar x$. If $p$ is a path in $\cQ$, the \emph{length $\ell(p)$ of $p$} is the number of arrows in $p$.
 
For every arrow in $\cQ$, we now define a set of paths starting or ending with  that arrow. 
Let $a$ be an arrow in  $\cQ$ and let $i$ be a non-negative integer. We set $p_0(a) = a$ and we 
define $p_i(a) = aq$ where $q$ is the unique path in $\cQ$ of length  $i$ such that $aq \notin I$ if such a path
$q$ exists.  If no such path exists, then $p_i(a)$ is not defined.
  Thus $p_1(a) = ab$ for a unique arrow 
$b$ such that $ab \notin I$ if such an arrow $b$ exists and $p_2(a)  =abc$ for a unique arrows $b$ and $c$ such that $abc \notin I$
if such $b$ and $c$ exist. Now define $p_{-j}(a) = qa$ where $q$ is the unique path 
		of length $j$ in $\cQ$ such that $qa \notin I$ if such a $q$ exists. Again the uniqueness of $q$, if it exists, follows from (M). Let $t(a)$ denote the largest non-negative integer $i$ such that $p_i(a) \notin I$. Let $s(a)$ be the largest non-negative integer $j$ such that 
$p_{-j}(a) \notin I$. 
Note that $0\leq s(a), t(a) \leq N-1$. Furthermore, $\overline{p_{t(a)}(a)}$ is in the right socle of $A$ and 
$\overline{p_{-s(a)}(a)}$ is in the left socle of $A$. 

The next lemma provides a number of results about these paths. 

\begin{lemma}\label{lem-path}Let $A=K\cQ/I$ be a special multiserial $K$-algebra.
\begin{enumerate}
\item 
Suppose that $q,q'$ are paths in $K\cQ$ and $a$ is an arrow in $\cQ$ such that 
$qaq'\not\in I$. Then $qa=p_{-\ell(q)}(a)$ and $aq'=p_{\ell(q')}(a)$.
\item 
Suppose that $q=a_1a_2 \cdots a_{i-1}a_i a_{i+1}\cdots a_r$ and $q'=b_1\cdots b_{i-1}a_i b_{i+1}\cdots b_r$ are paths in $K\cQ$ such that
$q\not\in I$  
 and $q\ne q'$. Then $q'\in I$.
\item  For $0\le i\le j\le t(a)$, $p_i(a)q=p_j(a)$, for some path $q$.
\item  For $0\le i\le j\le s(a)$  $qp_{-i}(a)=p_{-j}(a)$, for some path $q$.
\end{enumerate}\end{lemma}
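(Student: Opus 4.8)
The plan is to derive all four statements from two facts that are already available. The first is the admissibility of $I$: since $I$ is a two-sided ideal of $K\cQ$, any subpath (prefix, suffix, or interior factor) of a path that does not lie in $I$ again does not lie in $I$; equivalently, if some subpath lies in $I$ then so does the whole path. The second is the uniqueness of one-sided extensions already recorded after the definition of $p_i(a)$ and $p_{-j}(a)$: condition (M) forces, for each arrow $a$ and each length $n\ge 0$, at most one path $w$ of length $n$ with $aw\notin I$, and at most one path $w$ of length $n$ with $wa\notin I$.

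For (1), since $qaq'\notin I$ and $I$ is an ideal, the suffix $aq'$ and the prefix $qa$ are both subpaths of $qaq'$ and therefore are not in $I$. Now $aq'$ is a path beginning with the arrow $a$ whose forward part has length $\ell(q')$ with $aq'\notin I$, so by uniqueness of the forward extension it must coincide with $p_{\ell(q')}(a)$; dually, $qa$ is a path ending in $a$ whose backward part has length $\ell(q)$ and is not in $I$, so uniqueness of the backward extension gives $qa=p_{-\ell(q)}(a)$.

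For (2), I would argue by contradiction. Write $q=q_1 a_i q_2$ and $q'=q_1' a_i q_2'$ with $q_1=a_1\cdots a_{i-1}$, $q_2=a_{i+1}\cdots a_r$, and correspondingly for $q'$, and assume $q'\notin I$. Applying part (1) to the arrow $a_i$ sitting inside each of $q$ and $q'$ (both being not in $I$) yields $a_i q_2=p_{r-i}(a_i)=a_i q_2'$ and $q_1 a_i=p_{-(i-1)}(a_i)=q_1'a_i$. Cancelling $a_i$ forces $q_2=q_2'$ and $q_1=q_1'$, hence $q=q'$, contradicting $q\ne q'$; therefore $q'\in I$.

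For (3) and (4), I would exploit that $p_j(a)\notin I$ whenever $j\le t(a)$ (resp.\ $p_{-j}(a)\notin I$ whenever $j\le s(a)$) and pass to a subpath. Concretely, for $0\le i\le j\le t(a)$ the path $p_j(a)$ has the form $aw$ with $aw\notin I$, so its length-$(i+1)$ prefix is a subpath of a path not in $I$ and hence is not in $I$; by uniqueness of the forward extension this prefix equals $p_i(a)$, so $p_j(a)=p_i(a)q$ for $q$ the remaining length-$(j-i)$ factor. Statement (4) is proved in exactly the same way with prefixes replaced by suffixes and forward extensions replaced by the backward extensions $p_{-i}(a)$, $p_{-j}(a)$. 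The argument presents no real obstacle beyond bookkeeping: the whole lemma is a formal consequence of admissibility together with uniqueness of one-sided extensions under (M). The one point needing care is keeping the length and direction conventions straight — that $p_i(a)$ records $i$ arrows following $a$ while $p_{-j}(a)$ records $j$ arrows preceding $a$ — so that the subpaths invoked in (3) and (4) are indexed correctly and uniqueness is applied to extensions of the correct one-sided type.
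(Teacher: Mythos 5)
Your proposal is correct and is essentially the paper's own argument spelled out in full: the paper disposes of this lemma with the single line that it follows by repeated application of condition (M), and your two ingredients — that subpaths of a path not in $I$ are not in $I$ (the ideal property), and the uniqueness of one-sided extensions of a given length forced by (M) — are exactly what that repeated application amounts to. No gaps.
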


\begin{proof}  The proof is just  repeated applications of condition (M).
\end{proof}


We now examine the structure of uniserial modules defined by an arrow in $Q$.

\begin{lemma}\label{lem-Ua}
Let $A = K\cQ / I$ be a special multiserial algebra and let $a$ be an arrow in $Q$. Set 
$U_a = aA$. Then 
\begin{enumerate}
\item The  $A$-module  $U_a$ is uniserial. 
\item The  $A$-module  $U_a /\soc(U_a)$  is  uniserial. 
\item We have $\soc(U_a) = U_a \cap \soc(A)$. 
\item We have $\rad(A) = \sum_{b \in \cQ_1} U_b$.  
\item The set $\{\overline{p_0(a)}, \overline{p_1(a)}, \ldots,  \overline{p_{t(a)}(a)}   \}$ is a $K$-basis of $U_a$. 
\end{enumerate}
\end{lemma}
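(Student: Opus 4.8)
The plan is to take the five parts in a logical order, obtaining (1), (2) and (4) almost formally from Lemma~\ref{lem-uniser}, condition (M) and the presentation, and then to isolate the basis computation in (5) as the one genuine calculation. I would prove (1) first, since (2), (3) and (5) all lean on it. Let $u$ be the start vertex of $a$; then $e_u$ is right uniform (with $e_u e_u = e_u$) and $e_u a = a$, so Lemma~\ref{lem-uniser}, applied to $M = A_A$ and $m = e_u$, shows that $U_a = aA$ is uniserial. Part (2) is then immediate: every quotient of a uniserial module is uniserial, because the submodules of $U_a/\soc(U_a)$ correspond to the submodules of $U_a$ containing $\soc(U_a)$, and these form a chain. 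For (4) I would argue directly from the presentation: $\rad(A) = J/I$ is spanned by images of paths of length $\ge 1$; any such path factors as $b p'$ with $b \in Q_1$, so its image lies in $bA = U_b$, giving $\rad(A) \subseteq \sum_b U_b$; conversely each arrow $b$ lies in the right ideal $\rad(A)$, so $U_b \subseteq \rad(A)$.

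For (3) I would first record the general fact that for any submodule $U \le A_A$ one has $\soc(U) = U \cap \soc(A)$: a simple submodule of $U$ is a simple submodule of $A$, whence $\soc(U) \subseteq U \cap \soc(A)$, while $U \cap \soc(A)$ is a submodule of the semisimple module $\soc(A)$, hence semisimple and contained in $\soc(U)$. Applying this to $U = U_a$ gives the statement. To make the socle concrete (and to feed into (5)) I would also identify $\soc(U_a) = K\overline{p_{t(a)}(a)}$: no arrow $d$ satisfies $p_{t(a)}(a)d \notin I$, for otherwise $p_{t(a)+1}(a)$ would be defined, contradicting the maximality of $t(a)$; hence $\overline{p_{t(a)}(a)}\cdot\rad(A) = 0$, so this nonzero element spans a simple submodule, which must be all of the (simple, by uniseriality) socle.

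The substance is in (5), which I would split into spanning and dimension. For spanning, $U_a = aA$ is spanned by the images $\overline{aw}$ over paths $w$, and whenever $\overline{aw} \neq 0$ condition (M) forces $w$ to be the unique non-zero continuation of $a$ of its length, so $\overline{aw} = \overline{p_{\ell(w)}(a)}$; thus $\{\overline{p_0(a)}, \ldots, \overline{p_{t(a)}(a)}\}$ spans $U_a$ and $\dim_K U_a \le t(a)+1$. For the matching lower bound I would run the radical filtration through uniseriality: since $A$ is basic, every non-zero Loewy layer of the uniserial module $U_a$ is one-dimensional, so $\dim_K U_a$ equals the Loewy length of $U_a$. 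As $\rad^i(U_a)$ is spanned by $\{\overline{p_j(a)} : j \ge i\}$, one sees that $\rad^{t(a)}(U_a) \ni \overline{p_{t(a)}(a)} \neq 0$ while $\rad^{t(a)+1}(U_a) = 0$, and uniseriality forces the chain $U_a \supsetneq \rad(U_a) \supsetneq \cdots \supsetneq \rad^{t(a)}(U_a) \supsetneq 0$ to have exactly $t(a)+1$ strict steps. Hence $\dim_K U_a = t(a)+1$ equals the number of spanning vectors, so they form a basis.

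The main obstacle is precisely this bookkeeping in (5): spanning is cheap, but converting "$t(a)+1$ spanning vectors" into "$t(a)+1$-dimensional" requires pinning down the Loewy length, and the cleanest route is to push the radical-power filtration through the uniseriality of (1) rather than attempting a direct linear-independence argument, which tends to be circular since $\rad^{i+1}(U_a)$ is itself described in terms of the very vectors $\overline{p_j(a)}$. I would therefore make sure (1) is in hand before attacking (5), and I would record explicitly that basicness (simple modules one-dimensional) is what licenses the identity $\dim_K U_a = \text{Loewy length of } U_a$.
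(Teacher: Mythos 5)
Your parts (1)--(4) match the paper's proof in substance: (1) is the same repeated application of condition (M), just packaged as an invocation of Lemma~\ref{lem-uniser} with $M=A_A$ and $m=e_u$ (legitimate, since $e_u$ is right uniform and $e_ua=a$, and the paper itself notes that Lemma~\ref{lem-uniser} generalizes the $aA$ case); (2) and (4) are identical; and for (3) your general identity $\soc(U)=U\cap\soc(A)$ for an arbitrary submodule $U$ of $A_A$ is cleaner than, but equivalent to, the paper's remark that $\overline{p_{t(a)}(a)}$ lies in both socles. The genuine divergence is in (5). The paper proves linear independence head-on: assuming $\sum_i\lambda_ip_i(a)\in I$ and letting $i_0$ be the least index with $\lambda_{i_0}\neq 0$, it factors the sum as $p_{i_0}(a)\bigl(\lambda_{i_0}e_w+x\bigr)$ with $x\in J$, shows $\lambda_{i_0}e_w+x$ is invertible modulo $I$ because $J^N\subseteq I$ (a geometric-series trick), and so deduces $p_{i_0}(a)\in I$, a contradiction. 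You instead count dimensions: spanning gives $\dim_KU_a\le t(a)+1$, while the filtration $\rad^i(U_a)=\operatorname{span}\{\overline{p_j(a)}:j\ge i\}$, Nakayama, and the one-dimensionality of the simples (automatic for $K\cQ/I$ with $I$ admissible) force each of the $t(a)+1$ Loewy layers to be exactly one-dimensional, so $\dim_KU_a=t(a)+1$ and the spanning set is a basis. Both routes are correct; yours trades the invertibility computation for the radical filtration and is arguably slicker, though your aside that a direct independence argument ``tends to be circular'' is unfair to the paper's version, which carries it out without circularity.
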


\begin{proof}
Let $a \in \cQ_1$. If $U_a$ is a simple $A$-module, part (1) follows. Assume that $U_a$ is not a simple module. By condition (M), there is at most one arrow $b$ such that $ab \notin I$. It follows that $U_a / U_a \rad(A)$ and $U_a \rad(A) / U_a \rad^2(A)$ are both simple modules. 
Continuing in this fashion proves part (1). 

Part (2) follows from part(1). 

Part (3) follows from the observation that $p_{t(a)}(a)$ is in the right socle of $A$ and that 
it is also in the socle of $U_a$. 

Part (4) holds since $\sum_{b \in \cQ_1} U_b$ is the right submodule of $A$ generated by all arrows in $\cQ$. Hence $\sum_{b \in \cQ_1} U_b = J/I = \rad(A)$, where $J$ is the ideal in $K\cQ$ generated
by the arrows of $\cQ$.

We now prove part (5). It is clear that $\{\overline{p_0(a)}, \overline{p_1(a)}, \ldots,  \overline{p_{t(a)}(a)}   \}$ generates $U_a$. So we are left to show that if 
$\sum_{i=0}^{t(a)} \lambda_i p_i(a) \in I$, with $\lambda_i \in K$, then $\lambda_i = 0 $ for 
all $i$.  Suppose for contradiction that there is an integer $i$, $0 \leq i \leq t(a)$ such that 
$\lambda_i \neq 0$. Let $i_0$ be the smallest such $i$. 

By Lemma~\ref{lem-path} (4), $p_j(a) = p_{i_0}(a) q_j$, for $j \geq i_0$ and some path $q_j$ of length $j-i_0$ starting at the vertex, $w$, at which $p_{i_0}(a)$ ends.  
 Thus 
$$ \sum_{i=0}^{t(a)} \lambda_i p_i(a) = p_{i_0}(a)  \sum_{j = i_0}^{t(a)} \lambda_{j} q_j.$$
Let $e_w$ be the idempotent in $K \cQ$ associated to $w$. Then,  noting that if $j=i_0$, $q_j$ is
of length $0$ and hence
$q_j=e_w$,   we have
$$\sum_{j = i_0}^{t(a)} \lambda_j q_j = \lambda_{i_0} e_w + \sum_{j = i_0+1}^{t(a)} \lambda_{j} q_j.$$
Let $x  = \sum_{j = i_0+1}^{t(a)} \lambda_j q_j$. We have $x \in J$ and since $J^N \subset I$, there is some $y \in K \cQ$ such that 
$$ (\lambda_{i_0} e_w + x) y +I = e_w +I$$
 where $y$ is obtained as follows:
$ (\lambda_{i_0} e_w + x) (\lambda_{i_0}^{-1} e_w- x)= e_w-x^2$.
Then $(e_w-x^2) (e_w+x^2) =e_w-x^4$ and  continuing in this fashion we finally obtain 
$(\lambda_{i_0} e_w + x) (\lambda_{i_0}^{-1} e_w- x) (e_w+x^2)(e_w+x^4)\cdots =
e_w-x^{2n}$ and $x^{2n}\in I$.
 Hence 
$$ \sum_{i=0}^{t(a)} \lambda_i p_i(a)y + I = p_{i_0}(a) +I.$$
But by assumption $ \sum_{i=0}^{t(a)} \lambda_i p_i(a) \in I$ and hence $p_{i_0}(a) \in I$, 
a contradiction. This completes the proof. 
\end{proof}

\section{Algebras with arrow-free socles} \label{arrowfree}

In this section we introduce the concept of an algebra with arrow-free socle. We show that the socle of a self-injective algebra of radical series with length at least 3  is arrow-free. We
also show that for an algebra with arrow-free socle, condition (M) is equivalent to a stronger condition (M$'$) defined below. 

We fix the following notation.  We let $A=K\cQ/I$
be an indecomposable finite dimensional algebra with $I$ an admissible
ideal in the path algebra $K\cQ$.  Denote by $\pi: K\cQ \rightarrow A$ the 
canonical surjection and let $\bar{a} = \pi(a)$, for $a \in \cQ_1$.

\begin{definition}{\rm
We say that the socle of $A$ is \emph{arrow-free} if, for each $a \in \cQ_1$,  we 
have $\bar{a} \notin \soc (_{A}A)$ and   $\bar{a} \notin \soc (A_{A})$ 
where $_{A}A$ denotes the left $A$-module $A$ and $A_{A}$ the 
right $A$-module $A$.
}
\end{definition}

We first show that the socle of a self-injective algebra is arrow-free. 

\begin{prop}\label{lem-self-inj implies arrow-free}
Let $A$ be self-injective and $\rad^2(A) \neq 0$. Then the socle of $A$ is arrow-free.
\end{prop}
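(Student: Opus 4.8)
The plan is to argue by contradiction after reducing to the right socle. Since $A$ is self-injective if and only if $A\op$ is, and since the hypothesis $\rad^2(A)\neq 0$ is preserved under passing to $A\op$, it suffices to show that no arrow $\bar a$ lies in $\soc(A_A)$; the assertion for $\soc({}_AA)$ is then the same statement applied to $A\op$. So suppose some arrow $a$, from a vertex $v$ to a vertex $w$, satisfies $\bar a\in\soc(A_A)$. I will call such an arrow a socle-arrow and derive $\rad^2(A)=0$.

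First I would pin down the local structure forced by a socle-arrow. Because $A$ is self-injective, each indecomposable projective $e_vA$ has simple socle. As $\bar a$ is a nonzero element of $\soc(A_A)\cap e_vA=\soc(e_vA)$, this forces $\soc(e_vA)=K\bar a$. Since $\bar a$ is the image of a path of length one it does not lie in $\rad^2(A)$; hence $\rad^2(e_vA)$ must vanish, for otherwise its nonzero socle would sit inside $K\bar a$ and place $\bar a$ in $\rad^2(A)$. Thus $e_vA$ is uniserial of length two with $\rad(e_vA)=\soc(e_vA)=K\bar a$, so $a$ is the \emph{unique} arrow starting at $v$ and $\soc(e_vA)\cong S_w$. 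Dually, $e_vA$ is the injective envelope of $S_w$ and hence isomorphic to $\Hom_K(Ae_w,K)$, so that $\dim_K e_vA=\dim_K Ae_w$; therefore $Ae_w$ is also uniserial of length two, with $\rad(Ae_w)=\rad(A)e_w=K\bar a=\soc(Ae_w)$. In particular $\bar a\in\soc({}_AA)$, so $\rad(A)\bar a=0$, and $a$ is the \emph{unique} arrow ending at $w$.

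The key step is then a propagation argument. Using $\rad(A)\bar a=0$ together with the uniqueness of the arrow $a$ out of $v$, every arrow $c$ ending at $v$ satisfies $\bar c\,\rad(A)=\overline{ca}\,A=0$, so $c$ is again a socle-arrow; symmetrically, applying the same reasoning in $A\op$, using $\bar a\,\rad(A)=0$ and the uniqueness of the arrow $a$ into $w$, every arrow $b$ starting at $w$ is a socle-arrow. Consequently the set $E\subseteq\cQ_1$ of socle-arrows is closed under sharing a vertex: an arrow meeting $v$ is either $a$ itself (the unique outgoing arrow) or ends at $v$ and hence lies in $E$, while an arrow meeting $w$ is either $a$ or starts at $w$ and hence lies in $E$. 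Since $A$ is indecomposable its quiver is connected, so the nonempty, vertex-closed set $E$ must equal all of $\cQ_1$. But $E=\cQ_1$ means $\bar b\,\rad(A)=0$ for every arrow $b$, that is $\rad^2(A)=0$, contradicting the hypothesis.

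I expect the main obstacle to be exactly the passage from the purely local fact that $e_vA$ has length two to a global statement: self-injectivity by itself only links $e_vA$ to $Ae_w$, not to the neighbouring right projectives, so a naive dimension count will not propagate on its own. The decisive observation is that a socle-arrow forces \emph{both} a unique outgoing arrow at its source and a unique incoming arrow at its target, and this is precisely what makes the socle-arrow property spread along every edge of the connected quiver.
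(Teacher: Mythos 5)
Your proof is correct and follows essentially the same strategy as the paper's: assume some arrow $\bar a\colon v\to w$ lies in the socle, use the simplicity of $\soc(e_vA)$ (from self-injectivity) together with admissibility of $I$ to force $a$ to be the unique arrow leaving $v$, and then propagate the socle-arrow property through the connected quiver to conclude that every arrow is a socle-arrow and hence $\rad^2(A)=0$. The only real difference is that you invoke the duality $e_vA\cong\Hom_K(Ae_w,K)$ to get the matching statement at the target vertex (unique incoming arrow at $w$, $\rad(A)\bar a=0$), which lets you spread the property both forwards and backwards, whereas the paper argues directly with paths and the admissibility of $I$ and propagates through the arrows ending at $v$, leaving the rest to ``continuing in this fashion.''
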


\begin{proof}
Suppose $\bar{a} \in \soc (A_{A})$ and suppose that $a$ is an arrow from 
a vertex $v$ to a vertex $w$ in $\cQ$. If $v = w$ then $A$ is isomorphic to $K[x]/(x^2)$ since $A$ is self-injective.   

Now suppose that $v \neq w$. If there is another arrow starting at $v$, by 
multiplying by arrows we would obtain a path $p \neq a$ such that $\bar{p}$ is a non-zero element in $\soc(A_A)$. Since $A$ is self-injective we get $
p-\lambda a \in I$, for some non-zero $\lambda \in K$, contradicting 
that $I$ is admissible. Thus $a$ is the only arrow starting at $v$. Suppose that $b$ is an arrow ending at $v$. If $ba \notin I$ then $\overline{ba} \in\
\soc(A_A)$ and hence, for some $\lambda \neq 0$, we have $ba - \lambda a \in I$, which contradicts that $I$ is admissible. Thus $ba \in I$ and $\bar{b} 
\in \soc(A_A)$. Continuing in this fashion, since $A$ is indecomposable,  we see that every arrow is in $\soc(A_A)$ and thus $\rad^2(A) =0$.
\end{proof}

We note that the converse does not hold in general. 

The following Lemma follows immediately from the definition of arrow-free. 

\begin{lemma} \label{lem- arrow-free implies M'}
If the socle of $A$ is arrow-free then for all arrows $a$ in $\cQ$ there are arrows 
$b$ and $c$ such that $ab \notin I$ and $ca \notin I$.   
\end{lemma}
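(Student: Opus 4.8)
The plan is to translate the arrow-free hypothesis, which is a statement about $\bar a$ not lying in a socle, into the non-vanishing of a product with $\rad(A)$, and then to reduce that product to products of $a$ with a single arrow. I would treat the right socle and the left socle separately but by symmetric arguments.

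First I would fix an arrow $a$ in $\cQ$ and record that, since $I$ is admissible, $\rad(A) = J/I$ where $J$ is the ideal of $K\cQ$ generated by the arrows; in particular $\rad(A)$ is spanned by the images $\bar p$ of paths $p$ of length at least one (this is also contained in Lemma~\ref{lem-Ua}(4)). Recalling that the right socle $\soc(A_A)$ consists of those $x \in A$ with $x\,\rad(A) = 0$, the arrow-free hypothesis gives $\bar a \notin \soc(A_A)$, hence $\bar a\,\rad(A) \neq 0$.

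Next I would exploit the factorization of paths. Every path $p$ of positive length factors as $p = b p'$ with $b \in \cQ_1$ its first arrow (using the convention that $bp'$ means $b$ followed by $p'$), so $\bar a\,\bar p = \bar a\,\bar b\,\bar{p'}$. Consequently, if we had $ab \in I$, i.e.\ $\bar a\,\bar b = 0$, for every arrow $b$, then $\bar a\,\bar p = 0$ for every positive-length path $p$, forcing $\bar a\,\rad(A) = 0$ and contradicting $\bar a \notin \soc(A_A)$. Therefore there is an arrow $b$ with $ab \notin I$. The left socle is handled the same way with left multiplication: $\bar a \notin \soc({}_A A)$ gives $\rad(A)\,\bar a \neq 0$, and writing each positive-length path as $p = p'' c$ with $c$ its last arrow yields $\bar p\,\bar a = \bar{p''}\,\bar c\,\bar a$; were $ca \in I$ for all arrows $c$ this would always vanish, again a contradiction, so some arrow $c$ satisfies $ca \notin I$.

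There is no genuine obstacle here; the statement is a direct unwinding of the definition of the socle, which is why the paper can call it immediate. The only point requiring care is bookkeeping with the composition convention ($ab$ meaning $a$ then $b$), so that ``first arrow'' matches right multiplication and ``last arrow'' matches left multiplication, together with the standard identification $\rad(A) = J/I$ used to reduce from arbitrary radical elements to single arrows.
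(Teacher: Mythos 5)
Your proof is correct and is exactly the unwinding the paper has in mind: the paper gives no argument beyond "follows immediately from the definition of arrow-free," and your identification of $\soc(A_A)$ with the right annihilator of $\rad(A)=J/I$, followed by factoring off the first (resp.\ last) arrow of a path, is the standard way to make that immediacy precise. No gaps; the convention bookkeeping you flag is handled correctly.
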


From condition (M) it follows that understanding the paths of length 2 is crucial. Set 
$$\Pi = \{ ab | a, b \in \cQ_1, ab \notin I\}.$$

We say that a cycle $C$ is \emph{basic} if there are no repeated arrows
in $C$. We say that  a set  $\{C_1, \ldots, C_r \}$ of basic cycles is \emph{special} if the following conditions hold
\begin{enumerate}
\item for each arrow $a$ in $\cQ$, $a$ occurs in exactly one $C_i$, $i = 1, \ldots r$,
\item  The path $ab$ is in $\Pi$  if and only if $ab$ is a subpath of  some cycle  $C_i=c_1 \ldots c_n$  where we consider $c_nc_1$ a subpath of $C_i$. 
\end{enumerate}

We show that for an algebra with arrow-free socle the following condition is equivalent to condition (M).  Set condition

 \begin{itemize}
\item[(M$'$)]For every arrow $a$ in $\cQ$ there exists exactly one arrow $b$ in $\cQ$ such that 
$ab \notin I$ and exactly  one arrow $c$ in  $\cQ $ such that $ca \notin I$. 
\end{itemize}

\begin{prop}\label{prop-(M) (M') equivalence}
Let $A = KQ/I$ be a finite dimensional indecomposable algebra with $I$ an admissible ideal of $KQ$. Suppose that the socle of $A$ is arrow-free. Then the following are equivalent
\begin{enumerate}
\item Condition (M) holds, that is, $A$ is  special multiserial. 
\item Condition ($M'$) holds. 
\item The map $\varphi: \Pi \rightarrow \cQ_1$ given by $\varphi(ab) = a$ is  bijective. 
\item  The map $\psi: \Pi \rightarrow \cQ_1$ given by $\psi(ab) = b$ is bijective.
\item There exists a special set of cycles.
\end{enumerate}
\end{prop}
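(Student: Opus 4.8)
The plan is to show all five conditions equivalent by routing everything through condition (M$'$), which I treat as the hub. The central observation is that the arrow-free socle hypothesis already supplies the ``at least one'' half of every successor/predecessor statement via Lemma~\ref{lem- arrow-free implies M'}: for each arrow $a$ there is at least one $b$ with $ab\notin I$ and at least one $c$ with $ca\notin I$. Equivalently, both set maps $\varphi,\psi\colon\Pi\to\cQ_1$ are surjective. With this in hand, every remaining condition amounts to an ``at most one'' (i.e.\ injectivity/uniqueness) requirement. I would begin with the trivial equivalence (1)$\Leftrightarrow$(2): (2)$\Rightarrow$(1) is immediate since ``exactly one'' implies ``at most one'', while (1)$\Rightarrow$(2) combines the ``at most one'' of condition (M) with the ``at least one'' of Lemma~\ref{lem- arrow-free implies M'}.

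Next I would dispose of (2)$\Leftrightarrow$(3)$\Leftrightarrow$(4) by a counting argument. Note that $\varphi$ is injective precisely when each arrow $a$ has at most one $b$ with $ab\notin I$, and $\psi$ is injective precisely when each arrow $b$ has at most one $a$ with $ab\notin I$; together with the surjectivity already supplied, these are exactly the two halves of (M$'$). Thus (3) and (4) are the two halves of (2), and it remains only to see that the two halves are equivalent under the arrow-free socle hypothesis. This is where finiteness enters: since $\Pi$ and $\cQ_1$ are finite and $\varphi,\psi$ are both surjective, bijectivity of either forces $|\Pi|=|\cQ_1|$, whereupon the other surjection between equinumerous finite sets is automatically bijective. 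This yields (3)$\Leftrightarrow$(4); since (3) and (4) together constitute (2) while each individually now implies the other, all three are equivalent.

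For (2)$\Leftrightarrow$(5) I would use the permutation of arrows induced by (M$'$). Under (2) the maps $\varphi,\psi$ are bijections, so $\sigma:=\psi\circ\varphi^{-1}\colon\cQ_1\to\cQ_1$ is a bijection sending $a$ to the unique arrow $b$ with $ab\notin I$; since $ab$ is a genuine path, the target of $a$ equals the source of $\sigma(a)$, so consecutive arrows compose. Decomposing $\sigma$ into orbits gives cyclic sequences of arrows, and I would take $\{C_1,\dots,C_r\}$ to be the associated cycles in $\cQ$. Because $\sigma$ is a bijection the orbits partition $\cQ_1$, so each arrow lies in exactly one $C_i$ and no arrow repeats within a cycle (the cycles are basic); and by construction $ab\in\Pi$ iff $b=\sigma(a)$ iff $ab$ is a cyclic subpath of the $C_i$ containing $a$. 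These are exactly the two special-set axioms, proving (2)$\Rightarrow$(5). Conversely, given a special set $\{C_i\}$, the first axiom places each arrow $a$ in a unique $C_i=c_1\cdots c_n$ as some $c_k$, and the second axiom then forces the unique $b$ with $ab\notin I$ to be $c_{k+1}$ and the unique $c$ with $ca\notin I$ to be $c_{k-1}$ (indices read cyclically), which is precisely (M$'$); hence (5)$\Rightarrow$(2).

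I expect the main obstacle to lie in the bookkeeping for (2)$\Leftrightarrow$(5): one must verify that the orbit cycles are genuine cyclic paths in $\cQ$ (source/target matching along each orbit), that ``basic'' is a consequence of $\sigma$ having no repeats within an orbit, and that the wraparound convention $c_nc_1$ in the second special-set axiom matches the cyclic structure of the $\sigma$-orbits. By contrast the counting step in (2)$\Leftrightarrow$(3)$\Leftrightarrow$(4) is conceptually light, but it depends essentially on the surjectivity of $\varphi$ and $\psi$ granted by the arrow-free socle; without that hypothesis, bijectivity of $\varphi$ and of $\psi$ would be logically independent and the clean cardinality argument would collapse.
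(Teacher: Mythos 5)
Your proposal is correct, and its overall architecture (using (M$'$) as the hub, with the arrow-free socle supplying the ``at least one'' half of everything via Lemma~\ref{lem- arrow-free implies M'}) matches the paper's. The one place where you genuinely diverge is the equivalence of (2), (3) and (4). The paper proves (3)$\Rightarrow$(2) by a path-chasing argument: starting from an arrow $a$ it builds a chain of predecessors $c_1, c_2, \dots$ until the first repeated arrow, and then uses injectivity of $\varphi$ to push the repeat back to the start of the chain, concluding that the predecessor of $a$ is unique; the equivalence with (4) is then asserted to be ``similar.'' You instead observe that (2) is exactly the conjunction of (3) and (4), and that since $\varphi$ and $\psi$ are both surjective (by the arrow-free hypothesis) between the finite sets $\Pi$ and $\cQ_1$, bijectivity of either one forces $|\Pi|=|\cQ_1|$ and hence bijectivity of the other. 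This cardinality argument is cleaner and symmetric in $\varphi$ and $\psi$, so you get (3)$\Leftrightarrow$(4) for free rather than repeating a mirror-image argument; what the paper's hands-on version buys is essentially the same chain construction it reuses verbatim for (2)$\Rightarrow$(5). Your treatment of (2)$\Leftrightarrow$(5) via the orbit decomposition of $\sigma=\psi\circ\varphi^{-1}$ is the paper's construction in slightly more structural clothing, and your (5)$\Rightarrow$(2) is marginally stronger than the paper's (5)$\Rightarrow$(1), which is harmless. I see no gaps.
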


\begin{proof} The implication (1) implies (2) follows from Lemma~\ref{lem- arrow-free implies M'}.

To see that (2) implies (3), let $ab \in \Pi$. Then by (M$'$), $b$ is unique and hence $\varphi$ is one-to-one and well-defined. Again by (M$'$), given $a \in \cQ_1$ there exists  $b \in \cQ_1$ such that $ab \in \Pi$ and hence $\varphi$ is onto. 

For the implication (3) implies (2), let $a \in \cQ_1$. By (3) there exists a unique arrow $b$ such that $ab \notin I$. Now by Lemma~\ref{lem- arrow-free implies M'} there exists an arrow $c_1$ such that $c_1a \notin I$. Again by Lemma~\ref{lem- arrow-free implies M'} there exists an arrow $c_2$ such that $c_2c_1 \notin I$. Continue in this way until the first repeat of an arrow, that is, we have some path $c_n \ldots c_s \ldots c_1 c_0$ where $ c_0=a$. If the first repeat is $c_n = c_s$, we show that $s = 0$. If not then since $\varphi$ is bijective, we have $c_{n-1} = c_{s-1}$, a contradiction. It now follows that $c_1$ is unique and (2) follows. 

That (2) is equivalent to (4) is similar to the equivalence between (2) and (3). 

Next we show that (2) implies (5). Let $a = a_0 \in \cQ_1$. Then there
exists a unique $a_1 \in \cQ_1$ such that $a_0 a_1 \notin I$ and there exists a unique  $a_2 \in \cQ_1$ such that $a_1 a_2 \notin I$. Continue in this way until the first repeat of an arrow to obtain a sequence of arrows 
$a_0 \ldots a_n$. As above we have $a_n = a_0$. So  $a_0 \ldots a_{n-1}$  is a basic cycle $C_1$. If there is some arrow 
$b_0$ such $b_0 \neq a_i$, for $i = 1, \ldots , n$, then continue in the same fashion to obtain a cycle $C_2 = b_0 \ldots b_m$. By uniqueness, no $b_i = a_j$. Either all the arrows occur in $C_1$ and $C_2$ or we can continue this proccess and construct a $C_3$. Eventually one obtains a special set 
$\{C_1, \ldots, C_r\}$ of special cycles. 

Finally we prove that (5) implies (1). Let  $a \in \cQ_1$. By the definition of  a special set of cycles $\{C_1, \ldots, C_r\}$, there exists an $i$ such that $a \in C_i$. The second part of the definition of a special set of cycles implies that there exists unique arrows $b$ and $c$ such that $ab \notin I$ and $ca \notin I$.  
\end{proof}

\begin{remark}\label{rmk-arrow-free}
{\rm 
(1) The above Proposition does neither assume that the algebra is self-injective nor that it is special multiserial. 

(2) Suppose that $A$ is special multiserial and arrow-free. If there are paths $p, q$ in $K\cQ$ with $\ell(p) \geq \ell(q)$  and $a \in \cQ_1$ such that $pa \notin I$ and $qa \notin I$ then there exists a unique path $r$ such that $rq = p$. 
}
\end{remark}

\section{Symmetric special multiserial algebras and Brauer configuration algebras}\label{symmetricspecialmultiserial}

In this section we study special multiserial algebras that have the additional property of being symmetric algebras. In the case of symmetric special biserial algebras,  it is proved in 
\cite{Roggenkamp, S} that 
the class of symmetric special biserial algebras coincides with the class of  Brauer graph algebras.  We will show in this section that an analogous results holds for  symmetric special multiserial algebras. Namely, the main result of this section 
is   to show that the class of symmetric special multiserial algebras coincides with the class of Brauer configuration algebras.
 Brauer configuration algebras have been defined in \cite{GS} and they can be seen as generalizations  of Brauer graph algebras. We will recall their definition below.   
 Note that in the present paper, we assume all Brauer configurations to be reduced.

 Before recalling definitions and further analysing the structure of symmetric special multiserial algebras, we first state the main result of this section.

\begin{thm}\label{thm-sma} 
Let $A=K\cQ/I$ be an indecomposable finite dimensional algebra over an algebraically closed field $K$
such that $I$ is an admissible ideal and  $ \rad(A)^2\ne 0$. Then $A$ is a symmetric special multiserial
algebra if and only if $A$ is a Brauer configuration algebra.
%
\end{thm}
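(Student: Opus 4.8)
My approach would be to establish both implications by constructing the relevant combinatorial or algebraic data explicitly. For the direction that a Brauer configuration algebra is symmetric special multiserial, I would invoke the known construction from \cite{GS}: a Brauer configuration algebra is by definition a quotient $K\cQ_{\Gamma}/I_{\Gamma}$ of a path algebra attached to a Brauer configuration $\Gamma$, and it is shown there to be symmetric. It therefore remains only to verify condition (M). I would do this by unwinding the definition of the quiver $\cQ_{\Gamma}$ and the ideal $I_{\Gamma}$: the arrows of $\cQ_{\Gamma}$ correspond to successor relations within the cyclic orderings at each vertex of $\Gamma$, and the relations in $I_{\Gamma}$ force that for any arrow $a$, the product $ab$ is nonzero for precisely the unique arrow $b$ that is the successor of $a$ in the relevant cyclic ordering (and similarly for $ca$). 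This is exactly condition (M), indeed the stronger condition (M$'$).

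For the converse, that a symmetric special multiserial algebra $A=K\cQ/I$ is a Brauer configuration algebra, I would exploit the machinery built earlier in the excerpt. Since $A$ is symmetric it is self-injective, and since $\rad^2(A)\neq 0$ Proposition~\ref{lem-self-inj implies arrow-free} gives that the socle of $A$ is arrow-free. Hence Proposition~\ref{prop-(M) (M') equivalence} applies, and in particular condition (5) yields a special set of cycles $\{C_1,\dots,C_r\}$. These cycles are the raw combinatorial input: each cycle determines a cyclic ordering, and the whole collection, together with multiplicities read off from the lengths of the associated uniserial modules $U_a=aA$ analysed in Lemma~\ref{lem-Ua}, should assemble into a Brauer configuration $\Gamma$. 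I would then argue that $A$ is isomorphic (or Morita equivalent, but since we have fixed a presentation, isomorphic) to the Brauer configuration algebra $K\cQ_{\Gamma}/I_{\Gamma}$, by matching quivers via the bijections $\varphi,\psi$ of Proposition~\ref{prop-(M) (M') equivalence} and matching relations using the basis description of Lemma~\ref{lem-Ua}(5) and the structural statements of Lemma~\ref{lem-path}.

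The main obstacle, I expect, will be recovering the full Brauer configuration data — especially the multiplicities of vertices and the correct identification of which arrows lie in the same polygon — purely from the ring-theoretic structure, and then checking that the relations of $I$ coincide with $I_{\Gamma}$ on the nose rather than merely up to the generators being of the right shape. The symmetry of $A$ is what pins this down: the symmetrizing form forces the socle of each projective to be one-dimensional and forces the two maximal nonzero paths through each arrow (the one starting at it and the one ending at it, governed by $t(a)$ and $s(a)$ in the notation before Lemma~\ref{lem-path}) to agree up to scalar, which is precisely the compatibility that a Brauer configuration encodes. The delicate point is translating the scalar relations coming from symmetry into the precise ``quadratic plus commutativity'' relations defining $I_{\Gamma}$; here the algebraically closed hypothesis on $K$ is needed to normalize the relevant scalars to the standard form. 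I would handle this by showing the socle relations and the path-length data determine each indecomposable projective up to isomorphism, and that the special set of cycles together with these data reconstructs $\Gamma$ uniquely, so that the two algebras have the same quiver and the same relations.
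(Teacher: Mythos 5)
Your proposal follows essentially the same route as the paper: the easy direction is delegated to the results of \cite{GS}, and for the converse you build the Brauer configuration from the cyclic structure of the arrows (your ``special set of cycles'' from Proposition~\ref{prop-(M) (M') equivalence} is exactly the orbit decomposition of the paper's permutation $\sigma$ induced by $I$), with the symmetrizing form and the algebraic closure of $K$ used to normalize scalars precisely as in Proposition~\ref{prop-good-pi}. The only real difference is in the final identification, where instead of trying to match the relations of $I$ and $I_\Gamma$ on the nose, the paper checks that the type one, two and three relations lie in the kernel of the induced surjection $K\cQ_\Lambda\to A$ and then concludes that the resulting surjection $\Lambda\to A$ is an isomorphism by comparing dimensions via the maximal non-projective uniserial modules --- a cleaner way to close the argument than the one you sketch.
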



\subsection{Definition of Brauer configuration algebras} We recall from \cite{GS} the definition of a (reduced) Brauer configuration algebra.  We start 
with the definition of a \bc , which generalizes a Brauer graph.   A \emph{\bc\ } $\Gamma$ is a
tuple $(\Gamma_0,\Gamma_1,\mu,\mathfrak o)$,  where
\begin{enumerate}
\item $\Gamma_0$  is a finite set of elements called \emph{vertices}.
\item $\Gamma_1$  is a finite collection of finite multisets of vertices which are called
\emph{polygons}.  Recall that a multiset is a set where elements can occur multiple times.
\item $\mu\colon \Gamma_0\to \{1,2,3,\dots\}$ is a set function called the \emph{multiplicity function}.
\item A vertex $\alpha$ is called \emph{truncated} if it occurs once in exactly one polygon
and $\mu(\alpha)=1$.
The sum over the polygons $V\in\Gamma_1$ of the number of times a vertex $\alpha$ occurs in $V$
is denoted $\val(\alpha)$.
 We say $\mathfrak o$ is an \emph{orientation} which means that, for each nontruncated vertex $\alpha$, there is a chosen 
cyclic ordering of the polygons  that contain $\alpha$, counting repetitions.  See the example and
the discussion below. 
\end{enumerate}
 We require that $\Gamma=(\Gamma_0,\Gamma_1,\mu,\mathfrak o)$ satisfies 
\begin{enumerate}
\item[C1.]  Every vertex in $\Gamma_0$ is a vertex in at least
one polygon in $\Gamma_1$.
\item[C2.]  Every polygon in $\Gamma_1$ has at least two vertices.
\item[C3.]  Every polygon in $\Gamma_1$ has at least one vertex
$\alpha$ such that  $\val(\alpha)\mu(\alpha)>1$.
\item[C4.]  If $\alpha$ is  a vertex in polygon $V$ and $\val(\alpha)\mu(\alpha)= 1$, that is, $\alpha$
is truncated then $V$ is a 2-gon.

\end{enumerate}

We note that C4 does not occur in the definition of a \bc\ in \cite{GS}.
In that paper a \bc\ was called \emph{reduced} if it satisfied C4.  In
this paper, all \bc s are ``reduced''.

\begin{Example}\label{ex1}{\rm
We give an example of a \bc.  Let $\Gamma_0=\{1,2,3,4\}$,
$\Gamma_1=\{V_1, V_2, V_3\}$ where $V_1=\{1,1,2,3,3\},
V_2=\{2,2,3\}$ and $V_3=\{2,4\}$, and
$\mu(i)=1$, except that $\mu(1)=3$ and $\mu(4)=2$.   To give an orientation, for each nontruncated
vertex, we need to be given a cyclic order of
the polygons that contain the vertex.  If a vertex occurs in a polygon more
than once, we will
use superscripts to denote these occurrences. Thus for vertex 1,
we need to order $V_1^{(1)},V_1^{(2)}$, for vertex 2, we must order $V_1,V_2^{(1)},V_2^{(2)},V_3$, for vertex
3 we must order $V_1^{(1)},V_1^{(2)},V_2$, etc.   So for vertex 1 we must have $V_1^{(1)}<V_1^{(2)}$, and to make it cyclic, we implicitly have $V_1^{(2)}<V_1^{(1)}$.
For vertex 2, there are  many choices of cyclic orderings, and for example, we will use
$V_1<V_3<V_2^{(1)}<V_2^{(2)} $, and to make it cyclic, we implicitly have
$V_2^{(2)}<V_1$.   Note that equivalently we could have taken any cyclic permutation of $V_1<V_3<V_2^{(1)}<V_2^{(2)} $.
 For vertex 3, take $V_1^{(1)}<V_1^{(2)}<V_2$ or a cyclic permutation of
this;  vertex 4, since $\mu(4)=2$ is not
truncated and we take the cyclic ordering to be just $V_3$ (with implicitly
$V_3<V_3$). }
\end{Example}

We call $V_{i_1}<V_{i_2}<\cdots
V_{i_m}$ a \emph{successor sequence of $V_{i_1}$ at  $\alpha$}
if $\alpha$ is a vertex in $ \Gamma_0$ and $V_{i_1}<V_{i_2}<\cdots
V_{i_m}$ is a cyclic ordering, obtained from the orientation $\mathfrak o$, of the polygons
containing $\alpha$ as an
element.   In this case, we say the $V_{i_{j+1}}$ is the successor of $V_{i_j}$ at $\alpha$,
for $j=1,\dots, m$ where $V_{i_{m+1}}=V_{i_1}$.                     

Fix a field $K$.
We now define the \bca\, $A$, associated to a \bc\ $\Gamma=(\Gamma_0,\Gamma_1,
\mu,\mathfrak o)$ via a quiver with relations.  That is, we will define a quiver 
$\cQ$ and a set of relations $\rho$ in the path algebra $K\cQ$ such that
$A$ is isomorphic to $K\cQ/I$, where $I$ is the ideal generated by $\rho$.
The vertex set of $\cQ$ is in one-to-one correspondence with $\Gamma_1$, the set
of polygons of $\Gamma$.  If $V$ is a polygon in $\Gamma_1$, we will denote the
associated vertex in $\cQ$ by $v$.  If  the polygon $V$ is a successor to the polygon
$V'$ at $\alpha$, there is an arrow from $v$ to $v'$, where $v$ is the vertex
in $\cQ$ associated to $V$
and $v'$  is the vertex in $\cQ$ associated to $V'$ in $\cQ$.   This gives a 
one-to-one correspondence between the set of successors in $\Gamma$ and the arrow set
in $\cQ$.

\begin{Example}\label{ex2}{\rm
The quiver $\cQ$ of Example \ref{ex1} is

\hskip 0.5in \begin{center} \includegraphics[scale=.7]{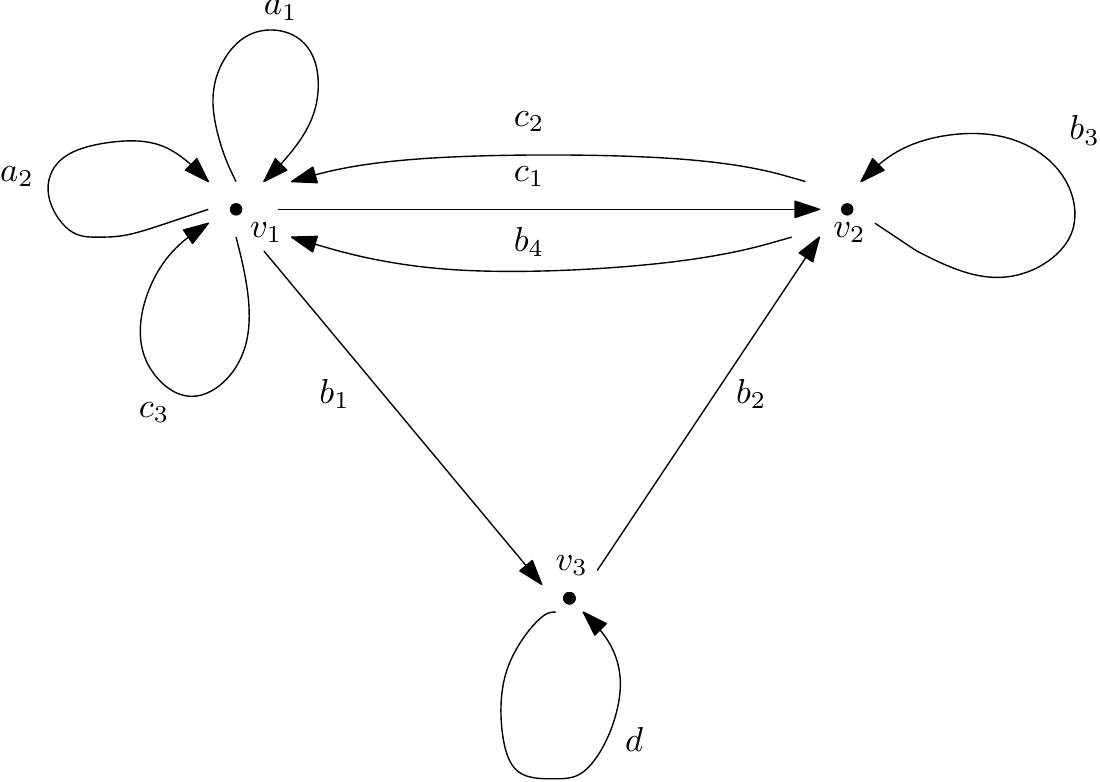}
\end{center}

\hskip 0.3in

Here $a_1$ corresponds to $V_1^{(2)}$ being a successor of $V_1^{(2)}$ and
$a_2$ corresponds to $V_1^{(1)}$ being a successor of $V_1^{(2)}$.  The arrows
labelled $b_1,b_2,b_3,b_4$ correspond to the successor sequence at vertex 2 of $\Gamma$.
The arrows
labelled $c_1,c_2,c_3$ correspond to the successor sequence at vertex 3 of $\Gamma$.
Finally, the arrow labelled $d$ corresponds to the successor sequence at vertex 4 of $\Gamma$. 
}\end{Example}

Before we define a generating set for the ideal of relations of $A$, we 
introduce the definition of a special $\alpha$-cycle at $v$, for
a nontruncated vertex $\alpha$ in $\Gamma_0$ and vertex $v\in{(\cQ_{\Gamma})}_0$.
Let
$ V_1< V_2 <\ldots < V_{\val(\alpha)}$ be the successor sequence
of $V_1$ at $\alpha$. For each $j$,
 let  $C_{j} = a_{j} a_{j+1}
\cdots a_{\val(\alpha)} a_1 \cdots a_{j-1}$ be the cycle in $\Qg$,   where the arrow $a_r$ corresponds to
the polygon $V_{r+1}$ being the successor of the polygon $V_r$ at the
vertex $\alpha$. 
Let $V$ be the polygon in $\Gamma_1$ associated to the vertex
$v\in{(\cQ_{\Gamma})}_0$ and suppose that 
$\alpha$ occurs $t$ times in $V$, with $t\ge 1$. Then there are $t$ indices $i_1, \dots, i_t$ such that $V = V_{i_j}$. 
We define
the \emph{special $\alpha$-cycles at $v$} to be  the cycles $C_{i_1}, \dots, C_{i_t}$.  We remark that
each $C_{i_j}$ is a cycle in $\cQ_{\Gamma}$, beginning and ending
at the vertex $v$. Note that if  $\alpha$ occurs only  once
in $V$ , then there is only one special $\alpha$-cycle at $v$. Furthermore, if  $V'$ is a polygon
consisting of $n$ vertices, counting repetitions, then there are a total of $n$
different special ${\alpha}'$-cycles at $v'$, for ${\alpha}'\in{(\cQ_{\Gamma})}_0$.  We will sometimes write \emph{special $\alpha$-cycle} (omitting the vertex) or simply \emph{special cycle} omitting both $\alpha$ and $v$, when
no confusion can arise.

\begin{Example}{\rm   Continuing the example, the special $ 1$-cycles at $v_1$ are
$a_1a_2$ and $a_2a_1$, the special  $2$-cycle at $v_1$ is $b_1b_2b_3b_4$, the special  $3$-cycles
at $v_1$
are $c_1c_2c_3$, and $c_3c_1c_2$, and there are no special  $4$-cycles at $v_1$.
There are no  special $1$-cycles at $v_2$, the  special $2$-cycles at $v_2$ are $b_3b_4b_1b_2$ and
$b_4b_1b_2b_3$, the special  $3$-cycle at $v_2$ is  $c_2c_3c_1$, and there are no  special $4$-cycles at $v_2$.  Finally there are no  special $i$-cycles at $v_3$ for  $i =1,3$.
The  special $2$-cycles at $v_3$ is $b_2b_3b_4b_1$ and the special 4-cycle at
$v_3$ is $d$.
}\end{Example}

There are three types of relations forming the generating set of relations $\rho_\Gamma$.

\textit{Relations of type one.}  For each polygon $V =\{\alpha_1, \ldots, \alpha_m\} \in \Gamma_1$ and each pair
of  nontruncated vertices $\alpha_i$ and $\alpha_j$  in $V$,  $\rho_\Gamma$ contains all relations of the form 
 $C^{\mu(\alpha_i)} - (C')^{\mu(\alpha_j)}$ or $(C')^{\mu(\alpha_j)} - C^{\mu(\alpha_i)}$ where $C$ is a special  $ \alpha_i$-cycle at $v$ and $C'$ is  a   special $ \alpha_j$-cycle at $v$.

\textit{Relations of type two.} The type two relations are all paths of
the form $C^{\mu(\alpha)}a_1$ where 
$C=a_1\cdots a_m$ is a  special $\alpha$-cycle for some vertex 
$\alpha$.

\textit{Relations of type three.} These relations are
quadratic monomial relations of the form $ab$ in $K\cQ_{\Gamma}$ where $ab$
is not a subpath of any cycle $C$ where $C$ is a special
cycle.

 \begin{definition}{\rm  Let $K$ be a field and
 $\Gamma$ a Brauer configuration.  The \emph{Brauer configuration
algebra $\Lambda_{\Gamma}$ associated to $\Gamma$}  is defined to be 
 $K\cQ_{\Gamma}/I_\Gamma$, where  $\cQ_{\Gamma}$ is
the quiver associated to $\Gamma$ and $I_\Gamma$ is the ideal in  $K\cQ_{\Gamma}$ generated by the set of relations $\rho_{\Gamma}$ 
of types one, two and three.}
\end{definition}

\begin{Example}{\rm
Continuing our example, some type one relations are $(a_1a_2)^3-(a_2a_1)^3,\linebreak (a_1a_2)^3-b_1b_2b_3b_4,
(a_2a_1)^3-b_1b_2b_3b_4, (a_1a_2)^3-c_1c_2c_3,  c_1c_2c_3-c_3c_1c_2,
b_2b_3b_4b_1-d^2$.
Some type two relations are $(a_1a_2)^3a_1, (a_2a_1)^3a_2, b_1\cdots b_4b_1, c_1c_2c_3c_1$,
and $d^3$.   Some type three relations are any $a_ib_j$, $b_ja_i$, $a_ic_k, a_id, db_j$, for
$1\le i\le 2, 1\le j\le 4, 1\le k\le 3$.  The set of relations of types one, two, and three generate the 
ideal of relations, but this set contains a large number of redundant relations and the set
is usually not a minimal generating set    for the ideal of relations.
}\end{Example}


\subsection{ Properties of symmetric special multiserial algebras} 
Before proving Theorem~\ref{thm-sma}, we analyse the structure of  symmetric special multiserial algebras in more detail. For this we do not
necessarily need to assume that $K$ is algebraically closed. 

From now on and for the remainder of Section 4, in addition to our previous assumptions, we always assume that all $K$-algebras considered are indecomposable. In fact we assume for the 
rest of this subsection, unless otherwise stated, that $A = KQ/I$ is an indecomposable symmetric special multiserial algebra where  $I$ is an admissible ideal and where $ \rad(A)^2\ne 0$. 

\begin{remark} {\rm Since $A$ is symmetric, it is self-injective, and therefore by Proposition~\ref{lem-self-inj implies arrow-free} its socle is arrow-free. So in particular, the condition (M$'$) holds for A.} 
\end{remark}
 


For the special multiserial algebra $A$, the additional property of being symmetric implies the existence of a permutation on the set of arrows of $Q$. 
Namely, let 
 $\pi\colon K\cQ\to A$ be the canonical surjection. Suppose that  $a$ is an arrow in $\cQ$.  Since
$\rad(A)^2\ne 0$ and $A$ is symmetric, 
$\pi(a)\not\in \soc(A)$ and hence there
is some arrow $b$ such that $a b\notin I$.    By condition (M), $b$ is
unique. This leads to the following definition.

\begin{definition}{\rm 
  Let $A = KQ/I$ be an indecomposable symmetric special multiserial algebra where  $I$ is an admissible ideal and such that $ \rad(A)^2\ne 0$.
Letting $\sigma(a)$ denote the unique arrow such that $a\sigma(a)\notin
I$, 
 the assignment $\sigma: \cQ_1 \to \cQ_1$ given by  $a \mapsto \sigma(a)$ defines a permutation on the set of arrows $Q_1$ of $Q$. We  call it the \emph{permutation induced by $I$.} }
\end{definition}

We remark that a similar contruction has been observed by S.\ Ladkani in the 
context of Brauer configuration algebras.

Let $\sigma$ be the permutation induced by $I$.
Note that if $b\ne a$, then $\sigma(a)\ne \sigma(b)$ since there is at most
one arrow $c$ such that $c\sigma(a) \notin I$. Thus  $\sigma$ is bijective and
 $\sigma^{-1}(a)$ is the unique arrow such that $\sigma^{-1}(a) a\notin I$. 
Since
$\sigma$ is an isomorphism, $\cQ_1$ is partitioned into the orbits of $\sigma$, which
we denote by  $\{O_1,\dots,O_m\}$.   These orbits will play an important role in
what follows.  Note that if $O$ is an orbit and $a$ is an arrow in $O$,
then $O=\{a,\sigma(a),\sigma^2(a),\dots,\sigma^{s}(a)\}$ where the cardinality $|O|$ of $O$
is $s+1$.

Since $A$ is symmetric, by definition, there is a  non-degenerate $K$-linear
form $f \colon A\to K$ such that, for all $x,y\in A$, $f(xy)=f(yx)$ (that is, the form $f$ is symmetric) and
$\ker(f)$ contains no nonzero two-sided ideals of $A$.  Furthermore,  the fact that $A$ is symmetric implies
that $A$ is  self-injective, and  (since $A$ is basic) the left socle of $A$ is equal to the right socle of $A$ and 
they are both equal to  the two-sided
socle of $A$.  Note that $\ker(f)$ contains no nonzero two-sided ideals if and
only if $\ker(f)\cap\soc(A)= \{0\}$.

 We now prove a technical lemma about the
orbits of $\sigma$.

\begin{lemma}\label{lem-orbits} Let $A = K \cQ/I $ be an indecomposable \sloppy
symmetric special 
multiserial $K$-algebra with $\rad(A)^2\ne 0$ and let $\sigma$ be the permutation on $Q$ induced by $I$. Given a $\sigma$-orbit  $O$ and an  arrow   $a$  in $O$, we set
 $$c_a=a\sigma(a) \sigma^2(a) \dots\sigma^{|O|-1}(a).$$ Then the following hold:
\begin{enumerate}
\item  The path $c_a$ is a cycle in $\cQ$.
\item The paths   $c_{\sigma^i(a)}$ are cycles in $\cQ$, for $0\le i\le |O|-1$.
\item There is an integer $m_a >0$ such that  $\overline{c_a^{m_a}}$ is a nonzero element in $\soc(A)$.
\item We have $m_a=m_{\sigma^i(a)}$ and 
$\overline{(c_{\sigma^i(a)})^{m_a}}$ is a nonzero element in $\soc(A)$, for $0\le i\le |O|-1$.
 \item We have  $f(\overline{c_a^{m_a}})=f(\overline{(c_{\sigma^i(a)})^{m_a}})$, for $0\le i\le |O|-1$, where $f$ is the symmetric linear form defined above.

\end{enumerate}

\end{lemma}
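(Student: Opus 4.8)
The plan is to treat the five assertions in order, since each builds on the previous ones. For part (1), I would observe that $c_a = a\sigma(a)\sigma^2(a)\cdots\sigma^{|O|-1}(a)$ is a product of arrows, and by the defining property of $\sigma$ each consecutive product $\sigma^i(a)\sigma^{i+1}(a)$ is not in $I$; more importantly, repeatedly applying $\sigma$ around the orbit must bring us back to the starting vertex. The key point is that $\sigma^{|O|}(a) = a$, so the arrow $\sigma^{|O|-1}(a)$ must end at the vertex where $a$ begins (otherwise $\sigma^{|O|-1}(a)\,a \notin I$ would fail, since $\sigma^{|O|-1}(a) = \sigma^{-1}(a)$ is exactly the unique arrow $c$ with $ca \notin I$). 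This forces $c_a$ to be a genuine closed path, i.e.\ a cycle. Part (2) is then immediate: since $\sigma^i(a)$ lies in the same orbit $O$ and the orbit is cyclically permuted by $\sigma$, the path $c_{\sigma^i(a)}$ is just a cyclic rotation of the arrows of $c_a$, hence also a cycle by the same argument.

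\textbf{The socle statements.} For part (3), I would use that $A$ is finite dimensional, so some power of the radical vanishes; since $\overline{c_a}\in\rad(A)$ and each $\overline{c_a^{\,k}}$ is built from products that avoid $I$ as long as possible (using condition (M) to guarantee uniqueness of the continuation at each step, cf.\ Lemma~\ref{lem-Ua} and Lemma~\ref{lem-path}), there is a largest $m_a$ with $\overline{c_a^{\,m_a}}\neq 0$. The claim is that this top nonzero power lies in $\soc(A)$: multiplying $\overline{c_a^{\,m_a}}$ by any arrow gives $\overline{c_a^{\,m_a}}\cdot(\text{arrow})$, which by condition (M) can only continue the unique path, and by maximality of $m_a$ this product must vanish, so $\overline{c_a^{\,m_a}}$ is annihilated by $\rad(A)$ on the right; a symmetric argument (using that $A$ is symmetric, hence the left and right socles coincide) handles left multiplication. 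For part (4), since the $c_{\sigma^i(a)}$ are cyclic rotations of $c_a$, their powers $c_{\sigma^i(a)}^{\,k}$ differ from $c_a^{\,k}$ only by a cyclic shift; I would show that $\overline{c_{\sigma^i(a)}^{\,k}}$ and $\overline{c_a^{\,k}}$ vanish for exactly the same $k$, giving $m_{\sigma^i(a)} = m_a$, and that the top power again lands in the socle by the same maximality argument.

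\textbf{Matching the linear form values.} Part (5) is where I expect the main obstacle. The goal is $f(\overline{c_a^{\,m_a}}) = f(\overline{(c_{\sigma^i(a)})^{\,m_a}})$ for all $i$. The natural tool is the symmetry $f(xy) = f(yx)$ of the defining form. The idea is that $c_{\sigma(a)}$ is obtained from $c_a$ by moving the initial arrow $a$ to the end: writing $c_a = a\,w$ where $w = \sigma(a)\cdots\sigma^{|O|-1}(a)$, one has $c_{\sigma(a)} = w\,a$. Then
\[
c_a^{\,m_a} = (aw)^{m_a} = a\,(wa)^{m_a-1}\,w,
\]
and I would like to rewrite this using cyclic invariance of $f$ so that $f(\overline{(aw)^{m_a}}) = f(\overline{(wa)^{m_a}}) = f(\overline{c_{\sigma(a)}^{\,m_a}})$, applying $f(xy)=f(yx)$ with $x = a$ and $y = (wa)^{m_a-1}w$. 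Iterating this relation around the orbit gives the general case $c_{\sigma^i(a)}$. The delicate point I would need to check carefully is that the relevant products are the genuine top socle elements (so that neither side has been killed by a relation) and that the cyclic rearrangement $(aw)^{m_a} \mapsto (wa)^{m_a}$ is legitimate at the level of $A$, not merely in $K\cQ$; this requires knowing that $\overline{(wa)^{m_a}}$ is itself the nonzero socle generator for the orbit representative $\sigma(a)$, which is exactly what parts (3) and (4) supply. Once that alignment is secured, the equality of $f$-values follows directly from the symmetry of $f$.
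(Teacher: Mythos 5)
Your treatment of parts (1), (2) and (5) matches the paper's, but there is a genuine gap in part (3) that propagates into part (4). You define $m_a$ as the largest integer with $\overline{c_a^{m_a}}\ne 0$ and claim that $\overline{c_a^{m_a}}$ is then annihilated on the right by every arrow ``by maximality of $m_a$''. Maximality only gives $\overline{c_a^{m_a+1}}=0$; it does not give $\overline{c_a^{m_a}\,a}=0$, since $c_a^{m_a}a$ is a proper initial subpath of $c_a^{m_a+1}$, and a path can be nonzero in $A$ while a longer path extending it is zero. In other words, the maximal nonzero path starting with $a$ (which by condition (M) is necessarily of the form $a\sigma(a)\cdots\sigma^{s-1}(a)$) could a priori terminate strictly between two full powers of $c_a$. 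This really does happen for special multiserial algebras that are not symmetric: take $\cQ$ a $2$-cycle with arrows $a,b$ and $I=(bab,\,abab)$; then $c_a=ab$, $\overline{c_a^2}=0$, yet $\overline{c_a\,a}=\overline{aba}\ne 0$, so the top power of $c_a$ is not in the socle. Hence the symmetry of $A$ must enter at exactly this point, not merely to identify the left and right socles as you use it. For the same reason, your assertion in part (4) that the powers of $c_a$ and of its rotation $c_{\sigma^i(a)}$ ``vanish for exactly the same $k$'' is unproven. (You also implicitly assume $\overline{c_a}\ne 0$ in order for $m_a\ge 1$ to exist, which is itself part of what needs proving.)

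The paper closes the gap with the non-degenerate symmetric form $f$: the maximal nonzero path $a\sigma(a)\cdots\sigma^{s-1}(a)$ lies in $\soc(A)$, so $f$ does not vanish on it (otherwise its $K$-span would be a nonzero two-sided ideal inside $\ker f$). Writing that path as $c_a^{m_a-1}a\cdots\sigma^{i}(a)$ and rotating cyclically inside $f$ gives $f(\overline{c_a^{m_a-1}a\cdots\sigma^{i}(a)})=f(\overline{\sigma^{i}(a)\,c_a^{m_a-1}a\cdots\sigma^{i-1}(a)})$, and the right-hand side is $0$ whenever $i<|O|-1$ because $\overline{\sigma^{i}(a)a}=0$. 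This forces $i=|O|-1$, i.e.\ the maximal path is exactly $c_a^{m_a}$, which is part (3). The same device ($f(\overline{c_a^{m_a}})=f(\overline{c_{\sigma(a)}^{m_a}})\ne 0$ together with the identity $a\,c_{\sigma(a)}^{m_a}=c_a^{m_a}a$) then yields part (4). You should import this use of $f$ into parts (3) and (4) rather than reserving it for part (5).
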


\begin{proof}
By definition,  $\overline{\sigma^i(a)}\overline{\sigma^{i+1}(a)}\ne 0$, for $1 \leq i \leq  |O|-1$. Therefore
$c_a$  is a path in $\cQ$.  Since $\sigma^{|O|}(a)=a$, the arrow $\sigma^{|O|-1}(a)$ ends at the same vertex at which $a$ starts.
 Thus $c_a$ is a cycle and
(1) is proved.

Statement (2) follows from (1) by replacing $a$ by $\sigma^i(a)$.  

  By the definition of $\sigma$, there must be an integer $s$  such that
$\ov{a {\sigma(a)}\cdots {\sigma^{s-1}(a)}}$  is a non-zero element in $\soc(A)$.
Since $\sigma^{|O|}(a)=a$, there exists an integer $m_a$ such that
\[{a}{\sigma(a)}\cdots \sigma^{s-1}(a)
=(c_a^{m_a-1})
a\cdots\sigma^i(a),\]
for  $1\le i\le |O|-1$.  Now there are two cases. Firstly, if $i=|O|-1$, then (3) directly follows. Secondly, 
suppose $i<|O|-1$. We will show that this is not possible since it leads to a contradiction, thus proving
(3).   So if $i<|O|-1$ then $f(\ov{(c_a^{m_a-1})
a\cdots\sigma^i(a)})\ne 0$.   But  $\ov{\sigma^i(a)a}=0$  since $a\ne \sigma^{i+1}(a)$.
Thus  $\ov{ \sigma^i(a)(c_a^{m_a-1})
a\cdots\sigma^{i-1}(a)} =0$.   But $f(\ov{(c_a)^{m_a-1}a\cdots\sigma^{i}(a)})=f(\ov{\sigma^i(a)(c_a^{m_a-1})
a\cdots\sigma^{i-1}(a)})$,  a contradiction.

			We now prove (4).   Suppose that $\ov{c_a^{m_a}}$ is a non-zero element in $\soc(A)$.
It suffices to show  $\ov{c_{\sigma(a)}^{m_a}}\in\soc(A)$.  
First note that, using $f(xy)=f(yx)$, for any $x, y \in A$, we
see that $f(\ov{c_a^{m_a}})=f(\ov{c_{\sigma(a)}^{m_a}})$.
Hence $\ov{c_{\sigma(a)}^{m_a}}\ne 0$.  
 Suppose for contradiction that $\ov{c_{\sigma(a)}^{m_a}}\not\in\soc(A)$,  then
 $\ov{ a c_{\sigma(a)}^{m_a}}\ne 0$ since $a$ is
the only arrow such that $\ov{a\sigma(a)}\ne 0$.
But
\[a{c_{\sigma(a)}^{m_a}}=c_a^{m_a}a\]
and hence  $\ov a\ov{c_{\sigma(a)}^{m_a}}= 0$ since
$\ov{c_{\sigma(a)}^{m_a}}$ is in the (left) socle
of $A$, a contradiction.

Part (5) follows since $f(xy)=f(yx)$,
\[c_a^{m_a}=(a\cdots \sigma^{i-1}(a))(c_{\sigma^i(a)}^{m_a-1}\sigma^i(a)
\cdots\sigma^{|O|-1})\text{ and}\]\[
c_{\sigma^i(a)}^{m_a}=(c_{\sigma^i(a)}^{m_a-1}\sigma^i(a)\cdots\sigma^{|O|-1})
(a\cdots \sigma^{i-1}(a)).
\]

\end{proof}

\begin{lemma}\label{lem-q}Let $A=K\cQ/I$ be an indecomposable 
symmetric special
multiserial $K$-algebra 
 and let $f\colon A\to K$ be a  non-degenerate symmetric $K$-linear form 
 such that $\ker(f)$ contains no two-sided
ideals in $A$.  Let $e$ be a primitive idempotent in $A$ and let $p$ and $p'$ be nonzero elements in $e\soc(A)e$ such that
$f(p)=f(p')$.  Then $p=p'$.

\end{lemma}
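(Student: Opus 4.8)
The plan is to deduce the equality $p=p'$ from the fact that the restriction of $f$ to the two-sided socle $\soc(A)$ is injective, which is exactly the hypothesis that $\ker(f)$ meets $\soc(A)$ trivially. Recall that, as noted above, the requirement that $\ker(f)$ contain no nonzero two-sided ideal of $A$ is equivalent to $\ker(f)\cap\soc(A)=\{0\}$; this is the only property of the symmetric form $f$ that I intend to use.

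First I would record that $e\soc(A)e\subseteq\soc(A)$. Indeed, $\soc(A)$ is the two-sided socle of $A$, hence a two-sided ideal, so multiplying on either side by the idempotent $e\in A$ keeps us inside $\soc(A)$. In particular both $p$ and $p'$ lie in $\soc(A)$, and since $\soc(A)$ is a $K$-subspace their difference $p-p'$ lies in $\soc(A)$ as well. Next, using that $f$ is $K$-linear and that $f(p)=f(p')$, I would observe $f(p-p')=f(p)-f(p')=0$, so that $p-p'\in\ker(f)$. Combining the two observations gives $p-p'\in\ker(f)\cap\soc(A)=\{0\}$, whence $p=p'$, as claimed.

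There is essentially no obstacle once the characterization $\ker(f)\cap\soc(A)=\{0\}$ is in hand: the content of the lemma is precisely that $f$ separates elements of the socle. I note that the hypothesis that $p$ and $p'$ are nonzero plays no role in this argument; it is included because in the intended applications $p$ and $p'$ are specific nonzero socle generators (for instance the elements $\overline{c_a^{m_a}}$ produced in Lemma~\ref{lem-orbits}), and the conclusion is then used to identify such generators inside $e\soc(A)e$.
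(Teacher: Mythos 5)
Your reduction of the lemma to ``$f$ is injective on the relevant subspace'' is the right instinct, but the subspace you use is too large, and the step $p-p'\in\ker(f)\cap\soc(A)=\{0\}$ is where the argument breaks down. The equivalence you quote is stated in the paper, but only one direction of it is correct: any nonzero two-sided ideal of a finite dimensional algebra meets the socle, so $\ker(f)\cap\soc(A)=\{0\}$ does force $\ker(f)$ to contain no nonzero two-sided ideals; the converse fails as soon as $A$ has more than one simple module. Indeed, $\ker(f)\cap\soc(A)$ is merely a subspace, not an ideal, and since $\soc(A)=\bigoplus_{e'}e'\soc(A)e'$ has $K$-dimension equal to the number of vertices of $\cQ$, a linear form $f\colon A\to K$ cannot restrict to an injection on $\soc(A)$ unless $\cQ$ has a single vertex. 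Concretely, if $e\neq e'$ are two primitive idempotents and $p\in e\soc(A)e$, $q\in e'\soc(A)e'$ are nonzero, then after rescaling $q$ one gets a nonzero element $p-q$ of $\ker(f)\cap\soc(A)$. So the equality you rely on is false in general, and the proof as written does not go through.

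The repair is exactly the ingredient your proof never invokes and which the paper's own argument isolates: the one-dimensionality of $e\soc(A)e$. Since $A$ is basic and symmetric with one-dimensional simple modules, $e\soc(A)e=\soc(eA)\cong S_e$ is one-dimensional, and it is a two-sided ideal of $A$ (it is annihilated on both sides by $\rad(A)$ and by every primitive idempotent other than $e$). Hence if $f$ vanished on a nonzero element of $e\soc(A)e$, it would vanish on this entire one-dimensional ideal, contradicting the hypothesis that $\ker(f)$ contains no nonzero two-sided ideal; therefore $f$ restricted to $e\soc(A)e$ is injective. Now your computation finishes the proof correctly in the smaller space: $p-p'\in e\soc(A)e$ and $f(p-p')=0$ force $p=p'$. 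Your closing observation that the nonvanishing of $p$ and $p'$ is not needed does survive this correction; what cannot be dispensed with is the localisation to $e\soc(A)e$ and its minimality as an ideal, rather than any global separation property of $f$ on $\soc(A)$.
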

\begin{proof}  Since $\ker f$ contains no non-zero two-sided ideals and since $\dim_K(e\soc(A)e)=1$,  we have that $f$ restricted to $e\soc(A)e $
is an isomorphism.  The result follows.
\end{proof}

 For the next  result we need to assume that the field $K$ is algebraically closed. 
Keeping the notations as above, we have the following. 

\begin{prop}\label{prop-good-pi} Let $K$ be an algebraically
closed field, let $A  $ be a basic  indecomposable
symmetric special multiserial  $K$-algebra, and let $\cQ$ be the quiver of $A$.
Then there exists a surjection $\pi\colon K\cQ\to A$ such 
that \begin{enumerate}
\item $\ker(\pi)$ is admissible, and 
\item   if $a$ and $b$ are arrows starting at a vertex $v$ in $Q$, then
$\pi(c_a^{m_a})=\pi(c_b^{m_b})$.
\end{enumerate}
\end{prop}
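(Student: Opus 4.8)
The plan is to begin with an arbitrary admissible presentation of $A$ and then to rescale the arrows so as to normalise the relevant socle elements. Since $\cQ$ is the quiver of $A$, fix once and for all a surjection $\pi_0\colon K\cQ\to A$ whose kernel $I_0$ is admissible; the permutation $\sigma$, the cycles $c_a$, and the exponents $m_a$ of Lemma~\ref{lem-orbits} are all computed relative to $I_0$. Fix also the symmetric nondegenerate form $f$. For a $\sigma$-orbit $O$ and an arrow $a\in O$, set $\delta_O=f(\pi_0(c_a^{m_a}))$. By Lemma~\ref{lem-orbits}(4) and (5) this scalar does not depend on the choice of $a$ in $O$, and it is nonzero because $\pi_0(c_a^{m_a})$ is a nonzero element of $\soc(A)$ while $\ker(f)\cap\soc(A)=0$.

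For a family of scalars $(\lambda_a)_{a\in\cQ_1}$ in $K^{\times}$, let $\phi$ be the algebra automorphism of $K\cQ$ fixing every vertex and sending each arrow $a$ to $\lambda_a a$, and put $\pi=\pi_0\circ\phi$. As $\phi$ fixes vertices and scales each arrow, it preserves the powers of the arrow ideal, so $\ker(\pi)=\phi^{-1}(I_0)$ is again admissible, giving part (1); moreover $ab\in\ker(\pi)$ if and only if $ab\in I_0$, so $\sigma$, the cycles $c_a$, and the exponents $m_a$ are unchanged. Since $c_a$ traverses each arrow of its orbit $O$ exactly once, one computes $\phi(c_a^{m_a})=\Lambda_O^{\,m_a}\,c_a^{m_a}$, where $\Lambda_O=\prod_{b\in O}\lambda_b$, and hence $f(\pi(c_a^{m_a}))=\Lambda_O^{\,m_a}\delta_O$.

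The crux is now to choose the $\lambda_a$ so that $f(\pi(c_a^{m_a}))=1$ for every arrow $a$. Because $\delta_O$ is an invariant of the orbit $O$, this is a condition on orbits rather than on individual arrows, and the orbits partition $\cQ_1$; thus the products $\Lambda_O$ may be prescribed independently (for each orbit let a single chosen arrow carry the scaling and set the remaining $\lambda_b=1$). Using that $K$ is algebraically closed and that the common value $m_O:=m_a$ (for $a\in O$), which is $\ge 1$ by Lemma~\ref{lem-orbits}(3),(4), solve $\Lambda_O^{\,m_O}=\delta_O^{-1}$ for each $O$; with this choice $f(\pi(c_a^{m_a}))=\delta_O^{-1}\delta_O=1$ for all $a$.

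Finally, let $a,b$ be arrows starting at a vertex $v$. Then $c_a^{m_a}$ and $c_b^{m_b}$ are cycles at $v$, so $\pi(c_a^{m_a})$ and $\pi(c_b^{m_b})$ lie in $e_v\soc(A)e_v$; both are nonzero (as each $\Lambda_O\ne 0$) and have $f$-value $1$. Lemma~\ref{lem-q} then forces $\pi(c_a^{m_a})=\pi(c_b^{m_b})$, which is part (2). The one point requiring care is the decoupling in the third paragraph: the reason the construction succeeds without any compatibility constraint between distinct arrows at a vertex is precisely that $f(\pi_0(c_a^{m_a}))$ depends only on the $\sigma$-orbit of $a$ (Lemma~\ref{lem-orbits}(5)), so that normalising all of these values to $1$ reduces to an orbit-by-orbit root extraction rather than a coupled global system.
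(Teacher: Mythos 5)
Your proposal is correct and follows essentially the same route as the paper: rescale the arrows (one chosen arrow per $\sigma$-orbit, the rest left alone) so that $f(\pi(c_a^{m_a}))$ takes a common nonzero value for all arrows $a$ — using Lemma~\ref{lem-orbits}(4),(5) to see this is an orbit-by-orbit root extraction, which algebraic closure permits — and then invoke Lemma~\ref{lem-q} to conclude equality of the socle elements at each vertex. Your additional checks (that the rescaling preserves admissibility, $\sigma$, the cycles and the exponents $m_a$, and that $c_a$ traverses each arrow of its orbit exactly once) are details the paper leaves implicit, but the argument is the same.
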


\begin{proof} Since $A$ is assumed to be finite dimensional and basic, there exists a surjection  $\pi'\colon K\cQ\to A$ such that
$\ker(\pi')$ is admissible.  Let $f\colon A\to K$ be a  non-degenerate symmetric  $K$-linear
form with no two sided ideal in its kernel. 
We now construct a
surjection $\pi\colon K\cQ\to A$ by defining, for each arrow $a$ in $\cQ$, a non-zero constant  $\lambda_a \in K$
such that  by setting $\pi(a)=\lambda_a\pi'(a)$  the desired properties hold. Since  $\ker(\pi') $ is admissible, clearly  $\ker(\pi)$ is admissible. 

We show that (2) holds one $\sigma$-orbit at a time.
Let $O$ be a $\sigma$-orbit and $a\in O$.  Fix a nonzero
element $k\in K$ and consider the cycle $c_a$.  Then by Lemma \ref{lem-orbits}  (1) and (3), $\pi'(c_a^{m_a})\in e_v\soc(A)e_v$
where $v$ is the vertex at which the arrow $a$ starts and $e_v$ is the associated primitive idempotent in $A$.   We know that $f( \pi'(c_a^{m_a}))\ne 0$.
Let $\lambda_a=(\frac{k}{f( \pi'(c_a^{m_a}))}) ^{1/m_a}$.  Note
that if we  set $\pi(a) = \lambda_a\pi'(a)$ and  $\pi(\sigma^i(a)) = \pi'(\sigma^i(a))$, for $1\le i\le |O|-1$, then $f(\pi(c_a^{m_a}))=k$.
By Lemma \ref{lem-orbits}(5),
 $ f(\pi(c_{\sigma^i(a)}^{m_a}))=k$ for $1\le i\le |O|-1$.  
That is, we define $\lambda_b=1$ if $b\in O$ and $b\ne a$. 

Let $\pi\colon\cQ\to A$ be the resulting surjection from the construction
above carried out for every $\sigma$-orbit.  Then we have that for each arrow $a$ in $\cQ$,
$f(\pi(c_a^{m_a}))=k$.  Applying Lemma \ref{lem-q}, we get the
desired result.
\end{proof}


\subsection{ Proof of Theorem~\ref{thm-sma}}

We are now able to prove Theorem~\ref{thm-sma}, which states that a \bca\ is a special multiserial algebra and conversely, that every symmetric special multiserial algebra is a \bca . 





\emph{Proof of Theorem~\ref{thm-sma}.} 
 First assume that $A= K \cQ / I$ 
 is symmetric special multiserial.  
 By Proposition \ref{prop-good-pi} we can assume that there is a surjection 
$\pi\colon K\cQ\to A$ with $I = \ker \pi$ such that if $a$ and
$b$ are arrows in $\cQ$ starting at the same vertex
then $\pi(c_a^{m_a})=\pi(c_b^{m_b})$ where $c_a, m_a, c_b, m_b$ are as defined in Lemma~\ref{lem-orbits}.  

  Let $\sigma\colon \cQ_1\to 
\cQ_1$ be the permutation induced by $I$.  For each $\sigma$-orbit $O$, choose an arrow $a\in O$, and let $L_O$ denote the multiset consisting
of the vertices occuring in $c_a$, counting repetitions.  More precisely,
if  $c_a =a\sigma(a)\cdots\sigma^{|O|-1}(a)$ and if $\sigma^i(a)$ is an arrow from
$v_{j_i}$ to $v_{j_{i+1}}$ then $L_O=\{v_{j_0},v_{j_1},\dots
v_{j_{|O|-1}}\}$.  
Note, for $i=0$, $\sigma^0(a)=a$ is
an arrow from $v_{j_0}$ to $v_{j_{1}}$ and $\sigma^{|O|-1}(a)$
is an arrow from $v_{j_{|O|-1}}$ to $v_{j_0}$; that is, $v_{j_{|O|}}=v_{j_0}$.  By construction,
the set  $L_O$ is independent of the choice of $a\in O$ since if $a'\in O$, then
$c_{a'}$ is a cyclic permutation of the arrows of $c_a$.

We now construct the desired \bca\ which we denote by $\Gamma$.
We begin with a set $\Gamma_0^* \subset \Gamma_0$ which is in one-to-one correspondence with the
set of $\sigma$-orbits $\cO=\{O_1,\dots, O_m\}$ of $\sigma$.  
We let $\Gamma_0^*=\{\alpha_1,\dots,
\alpha_m\}$ where $\alpha_i$ corresponds to $O_i$.  These will be the 
nontruncated vertices of $\Gamma$.  The polygons of $\Gamma$
are in one-to-one correspondence with the vertices of $\cQ$ such that if $\cQ_0=
\{v_1,\dots,v_n\}$ then we set $\Gamma_1=\{V_1,\dots, V_n\}$ where the polygon $V_i$ corresponds to the vertex $v_i$ of $\cQ$. 

We need to describe the truncated vertices in  $\Gamma_0$, the elements
that occur in each polygon $V_i$, the
multiplicity function $\mu$ and the orientation $\mathfrak o$.  We begin fixing
$\alpha\in \Gamma_0^*$,  $V\in \Gamma_1$ and determine how many times
$\alpha$ occurs as an element in $V$.   Suppose that $\alpha$ corresponds to
the $\sigma$-orbit $O$ and $V$ corresponds to $v\in\cQ_0$.  
Then $\alpha$ occurs in $V$ the number of times $v$ occurs in $L_O$.

Next we define the set of truncated vertices in $\Gamma$. 
For each polygon $V$ that  consists of exactly one nontruncated vertex, say $\alpha$, we
add a new vertex $\alpha_V$ to the vertex set $\Gamma_0$ and to $V$.  Thus $V=\{\alpha, \alpha_V\}$.
We set $\mu(\alpha_V)=1$ and hence $\alpha_V$ is a truncated vertex of $\Gamma$.
In this way, we have defined the truncated vertices  and we see that condition C3 is satisfied. We also see  that condition
C2, namely that  $|V|\ge 2$,  is satisfied.
From this construction it is clear that $\Gamma$ satisfies condition C4. 

For each  $\alpha\in\Gamma_0^*$, let $\mu(\alpha)=m_a$,  where $a$ is an arrow
in the $\sigma$-orbit corresponding to $\alpha$.   By Lemma \ref{lem-orbits}(4), $\mu$
is independent of the choice  of $a$. We have defined $\mu$ to be $1$ on truncated
vertices and hence, we have completed the definition of the multiplicity function $\mu$.

Finally,  we need to describe the orientation  $\mathfrak o$.  For this, we let
$\alpha$ be  a vertex in $\Gamma_0^*$ and assume that $\alpha$ corresponds
to the $\sigma$-orbit $O=\{a,\sigma(a),\sigma^2(a),\dots, \sigma^{|O|-1}(a)\}$.
 Then, as above,  $\sigma^i(a)$ is an arrow from $v_{j_i}$ to $v_{j_{i+1}}$, for $0\le i\le |O|-1$
and $v_{j_{|O|}}= v_{j_0}$.   If $V_{j_i}$ is the polygon corresponding to the vertex
$v_{j_i}$, then we let $V_{j_0}<\cdots<V_{j_{|O|-1}}$ be the successor sequence
at $\alpha$.   Varying $\alpha\in\Gamma_0^*$ yields an orientation $\mathfrak o$.

 We have now constructed a \bc \ $\Gamma = (\Gamma_0, \Gamma_1, \mu, \mathfrak o)$. Let $\Lambda$ be the \bca\ associated to $\Gamma$.
We show that  $\Lambda$ is isomorphic to $A$.  Let $\cQ_{\Lambda}$ be the quiver
of $\Lambda$.
We begin by showing that there is an isomorphism of quivers from $\cQ_{\Lambda}$ to $\cQ$.
By our construction  of the quiver of $\cQ_{\Lambda}$ given in
the beginning of this section, we see that the vertices of $\cQ_{\Lambda}$
correspond to the polygons in $\Gamma$, which in turn correspond
to the vertices of $\cQ$.  Thus, we get a one-to-one correspondence
between the vertices of $\cQ_{\Lambda}$ and the vertices of  $ \cQ$.
Again by our construction of the quiver of $\cQ_{\Lambda}$ given in
the beginning of this section, the arrows of $\cQ_{\Lambda}$ correspond
to successors in the successor sequences of $\Gamma$.   But the successor
sequences in $\Gamma$ correspond to the $\sigma$-orbits and 
each arrow in $ \cQ$ occurs once, in exactly one $\sigma$-orbit.
Thus, the quivers  $\cQ_{\Lambda}$ and $\cQ$ are isomorphic.

An isomorphism from $\cQ_{\Lambda}$ to $\cQ$ induces an isomorphism
of path algebras $K\cQ_{\Lambda}\to K\cQ$.
Thus, we obtain a surjection $\varphi\colon K\cQ_{\Lambda}\to A$.
It is straight forward to see that relations of types  one, two and three 
are all in $\ker(\varphi)$.  Hence $\varphi$ induces a surjection
from $\Lambda$ to $A$.   To complete the proof, we consider
the uniserial  modules in both algebras, that is in the \bca\ $\Lambda$ and in $A$.  We now apply the
results from Section 3 in \cite{GS} on uniserial modules in a \bca \ and the results
from Section 2 of this paper on uniserial modules in a  symmetric special multiserial algebra. It follows that the uniserial $\Lambda$-modules $U$ 
such that  $U$ is not a projective $\Lambda$-module and
such that $U$ is maximal with this property,  correspond to the 
uniserial $A$-modules $U'$  such that  $U'$ is not a projective  
 $A$-module and such that 
$U'$ is maximal with this property.
  Thus 
the dimensions of $\rad(\Lambda)$ and $\rad(A)$  are equal.
It follows that the surjection from $\Lambda$ to $A$ is an
ismorphism and we are done. 

 The converse immediately follows from Proposition 2.8 in \cite{GS}.


\hfill $\Box$

\section{Symmetric  algebras with radical cube zero are special multiserial}\label{radicalcubezero}

 In this section we show that the class of special multiserial algebras contains another 
class of well-studied algebras. Namely that of symmetric algebras with radical cube zero. 
We show that  basic symmetric algebras with radical cube zero are special multiserial and hence  that they are Brauer configuration algebras.  We remark that in \cite{GS}, it is proved
that the class of 
symmetric algebras  with radical cube zero associated to a symmetric matrix with non negative integer coefficients is
the same as the class of \bca s in which the polygons have no repeated vertices.  Our main results
of this section show
that dropping this restriction on polygons classifies all symmetric  algebras  with radical cube zero.

More precisely, we prove the following result.

\begin{thm}\label{thm-r3equiv} Let $K$ be an algebraically closed field and
let $A\cong K\cQ/I$ be a finite dimensional  indecomposable $K$-algebra.  Assume that  $\rad^3(A)=0$ but $\rad^2(A)\ne 0$.
Then the following statements are equivalent.
\begin{enumerate}
\item $A$ is a  symmetric $K$-algebra.

\item $A$ is a symmetric multiserial $K$-algebra.
\item $A$ is a symmetric special multiserial $K$-algebra.
\item $A$ is isomorphic to  an indecomposable \bca.

\end{enumerate}
\end{thm}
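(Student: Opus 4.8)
The plan is to prove the cycle of implications $(1)\Rightarrow(3)\Rightarrow(2)\Rightarrow(1)$ together with the equivalence $(3)\Leftrightarrow(4)$, since then all four statements are equivalent. Three of these are essentially formal. The equivalence $(3)\Leftrightarrow(4)$ is exactly Theorem~\ref{thm-sma}: the standing hypotheses (indecomposable, finite dimensional over the algebraically closed field $K$, $I$ admissible, $\rad(A)^2\ne 0$) are precisely those of that theorem. The implication $(3)\Rightarrow(2)$ is the Corollary to Theorem~\ref{multiserial theorem}, which says a special multiserial algebra is multiserial, and this preserves the symmetric hypothesis. Finally $(2)\Rightarrow(1)$ is immediate from the definitions. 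Thus the entire content is the implication $(1)\Rightarrow(3)$, that is, Theorem C.

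For $(1)\Rightarrow(3)$ I would first reduce to the basic case; since $A\cong K\cQ/I$ we may assume $A$ is basic. As $A$ is symmetric it is self-injective, and since $\rad(A)^2\ne 0$, Proposition~\ref{lem-self-inj implies arrow-free} gives that the socle of $A$ is arrow-free. The next, and key structural, step is to pin down the socle: because $\rad^3(A)=0$ we have $\rad^2(A)\subseteq\soc(A)$, while symmetry and basicness give that each $e_i\soc(A)e_i$ is one dimensional and $e_i\soc(A)e_j=0$ for $i\ne j$. Combining these with arrow-freeness (a single arrow can never lie in the socle) forces $\soc(A)=\rad^2(A)$, with $e_i\rad^2(A)e_i$ one dimensional for every vertex $i$ and $e_i\rad^2(A)e_j=0$ for $i\ne j$. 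In other words, every nonzero path of length two is a cycle $i\to j\to i$, and for each vertex the space of such paths collapses to a one dimensional socle line.

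With this structure in hand, the heart of the argument is to normalise the presentation so that condition (M) holds. Let $f\colon A\to K$ be the symmetric non-degenerate form. For vertices $i,j$ let $A_{ij}=e_i(\rad(A)/\rad^2(A))e_j$, the span of the arrows from $i$ to $j$; multiplication induces a bilinear pairing $A_{ij}\times A_{ji}\to e_i\rad^2(A)e_i\cong K$, $(\bar a,\bar b)\mapsto f(\overline{ab})$. Using that $f$ restricts to an isomorphism on each one dimensional space $e_i\soc(A)e_i$ (as in Lemma~\ref{lem-q}) together with arrow-freeness, I would show this pairing is non-degenerate, and that the relation $f(\overline{ab})=f(\overline{ba})$ makes the pairings in the two directions transposes of one another. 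For $i\ne j$ I then choose dual bases of $A_{ij}$ and $A_{ji}$; for the loops at a vertex $i$ the pairing is a non-degenerate symmetric form on $A_{ii}$, which over the algebraically closed field $K$ admits a basis whose Gram matrix is a block sum of the blocks $[1]$ and $\bigl(\begin{smallmatrix}0&1\\1&0\end{smallmatrix}\bigr)$ (the latter only needed in characteristic two). In every case each basis vector pairs nontrivially with exactly one basis vector. Rewriting the arrows of $\cQ$ in these new bases yields an automorphism of $K\cQ$ and hence a new admissible presentation $K\cQ/I'\cong A$ in which, for each arrow $a$, there is exactly one arrow $b$ with $ab\notin I'$ and exactly one arrow $c$ with $ca\notin I'$. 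Thus condition (M) holds and $A$ is special multiserial, proving $(1)\Rightarrow(3)$.

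The main obstacle is this last normalisation step: extracting from the abstract symmetry of $A$ a concrete change of the arrow basis that simultaneously diagonalises all the pairings and respects both the left and right versions of (M). The structural reduction $\soc(A)=\rad^2(A)$ with one dimensional diagonal pieces is what makes this tractable, turning the problem into the linear algebra of a single non-degenerate pairing per ordered pair of vertices; the only genuine care needed is the symmetric-form case for loops in characteristic two, handled by the symplectic/orthogonal normal form above.
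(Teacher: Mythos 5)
Your proposal is correct and follows essentially the same route as the paper: the formal reductions are identical, and the substantive implication $(1)\Rightarrow(3)$ is proved in both cases by using the symmetric form $f$ and the identity $f(\overline{ab})=f(\overline{ba})$ to normalise the arrow basis so that condition (M) holds. The paper's Proposition~\ref{prop-spec} carries out by hand exactly the Gram--Schmidt/hyperbolic-plane reduction whose conclusion you invoke as the normal form of a non-degenerate symmetric bilinear form over an algebraically closed field.
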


 \noindent {\it Proof.} Clearly (2) implies (1).  We see that (3) implies (2) by
Theorem \ref{multiserial theorem}.   By Theorem \ref{thm-sma}, (3) holds if and only if
(4) holds.  It remains to show that (1) implies (4).  For this we start with some
preliminary results, culminating in Theorem \ref{thm-r30}, which is the
desired result. \hfill $\Box$


For the remainder of this section, we let
 $A = K\cQ /I $ be an indecomposable symmetric $K$-algebra such that
$\rad^3(A)=0$ but $\rad^2(A)\ne 0$.
   We assume that $ I$ is an admissible
ideal and let
$f \colon A\to K$ be a non-degenerate symmetric  linear form 
such that $\ker(f)$ does not contain a two-sided ideal of $A$.  If $M$ is a right
$\Lambda$-module,  then the \emph{Loewy length} of $M$ 
is $n$ if $M\rad^{n-1}(A)\ne 0$ and $M\rad^{n}(A)= 0$.  We fix a surjection $\pi\colon K\cQ\to
A$ with kernel $I$ and if $x\in K\cQ$, we will write $\bar x$ for
$\pi(x)$.   More generally, we will write $\bar a$ for elements in $A$.


The next result is well-known but we include
 a proof for completeness.

\begin{lemma}\label{lem-ll} Keeping the notations and assumptions
above, 
every indecomposable  projective $A$-module has Loewy
length $3$.
\end{lemma}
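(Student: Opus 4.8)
The plan is to exploit the two extreme features of $A$ at once: the identity $\rad^3(A)=0$ forces every indecomposable projective to have Loewy length at most $3$, while self-injectivity together with the arrow-free socle (Proposition~\ref{lem-self-inj implies arrow-free}) prevents the Loewy length from dropping below $3$. Write $P_v=e_vA$ for the indecomposable projective at a primitive idempotent $e_v$. Since $\rad^3(A)=0$ we have $\rad^3(P_v)=P_v\rad^3(A)=0$, so the Loewy length of $P_v$ is at most $3$, and it suffices to rule out Loewy length $1$ and $2$ for every $v$. Before that I would record two standard facts: as $A$ is symmetric, hence self-injective, the socle $\soc(P_v)$ is simple and nonzero, i.e. $e_v\soc(A_A)$ is $1$-dimensional; and $\rad(P_v)/\rad^2(P_v)\cong e_v(\rad(A)/\rad^2(A))$ has a $K$-basis given by the images of the arrows of $\cQ$ starting at $v$, so its dimension is the number of such arrows.

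For the Loewy length $1$ case, suppose $P_v=S_v$, so $e_v\rad(A)=0$ and there is no arrow out of $v$. Since $\rad^2(A)\neq 0$ the quiver has an arrow, and indecomposability (connectedness of $\cQ$) forces an arrow $b$ ending at $v$. Then $\bar b\,\rad(A)=\bar b\,(e_v\rad(A))=0$, so $\bar b\in\soc(A_A)$ is an arrow in the socle, contradicting Proposition~\ref{lem-self-inj implies arrow-free}. (If $\cQ$ had only the vertex $v$, then $\rad(A)=e_v\rad(A)=0$, against $\rad^2(A)\neq 0$.)

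For the Loewy length $2$ case, suppose $e_v\rad^2(A)=0$ but $e_v\rad(A)\neq 0$. Then $\rad(P_v)$ is annihilated by $\rad(A)$, hence semisimple, hence contained in the simple module $\soc(P_v)$; as $\rad(P_v)\neq 0$ this gives $\rad(P_v)=\soc(P_v)$, a simple $1$-dimensional module. By the basis statement above, $\dim_K\rad(P_v)=\dim_K\big(\rad(P_v)/\rad^2(P_v)\big)$ equals the number of arrows starting at $v$, so there is exactly one such arrow $a$, and $\bar a$ spans $\rad(P_v)=\soc(P_v)\subseteq\soc(A_A)$. Thus $\bar a$ is again an arrow lying in the socle, contradicting Proposition~\ref{lem-self-inj implies arrow-free}. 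Both cases excluded, $P_v$ has Loewy length exactly $3$.

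The hard part is the Loewy length $2$ case: one must ensure that the simple radical $\rad(P_v)$ is spanned by a genuine single arrow rather than by a nontrivial linear combination of arrows, since the arrow-free hypothesis is phrased for individual arrows $\bar a$. The dimension count through $\rad(P_v)/\rad^2(P_v)$ is exactly what resolves this, forcing precisely one arrow out of $v$ and hence a bona fide arrow in the socle. Everything else is bookkeeping with the radical and socle filtrations.
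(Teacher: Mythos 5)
Your proof is correct. The reduction to excluding Loewy lengths $1$ and $2$, the use of the simple socle coming from self-injectivity, and the dimension count identifying $\rad(P_v)/\rad^2(P_v)$ with the span of the arrows out of $v$ are all sound, and the final step correctly pins down a single genuine arrow $\bar a$ lying in $\soc(A_A)$ rather than a mere linear combination, so Proposition~\ref{lem-self-inj implies arrow-free} applies.

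The route differs slightly from the paper's. The paper treats only the Loewy length $2$ case and argues directly: the indecomposable projective $P$ would have simple top and simple socle both isomorphic to $S$ and no other composition factors, forcing a loop at $v$ with no other arrow entering or leaving $v$, hence a block isomorphic to $K[x]/(x^2)$, contradicting indecomposability. You instead funnel both degenerate cases through the arrow-free socle property of Proposition~\ref{lem-self-inj implies arrow-free}, which the paper has already established (and whose proof is where the indecomposability of $A$ is actually consumed). Your version is arguably cleaner bookkeeping: it reuses an existing proposition rather than re-deriving the block decomposition, and it makes explicit the Loewy length $1$ case that the paper leaves implicit. The paper's version is more self-contained and makes visible exactly what the offending algebra would look like ($K[x]/(x^2)$). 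Both arguments rest on the same two pillars: simplicity of $\soc(P_v)$ from self-injectivity, and the identification of $\rad(P_v)/\rad^2(P_v)$ with the arrows leaving $v$.
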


\begin{proof} 

 Let $P$ be an indecomposable projective $A$-module
of Loewy length $2$. Then $P$ is the projective cover of a simple
$A$-module $S$. Since $A$ is symmetric, $P$ has  simple top and simple socle isomorphic to $S$ and these are the only composition factors of $P$. Thus $P$ is an extension 
of $S$ by $S$ and if $S$ is the simple at vertex $v$ in $\cQ$ then there is a loop at $v$ in $\cQ$. Furthermore, there is no other arrow leaving $v$. 
Since $A$ is symmetric, $P$ is also the injective hull of $S$ and so there is no arrow entering $v$. Thus there is a loop at $v$ and no other arrow entering or leaving $v$. Thus there is a factor $K[x]/(x^2)$ of $A$,  
contradicting the indecomposability of $A$.
\end{proof}


\begin{lemma}\label{lem-kerq}  Keeping the notations and assumptions
above, let  $e_v$ be the idempotent at   a vertex $v$ in $K\cQ$ and let $x$ be a linear combination
of  paths of length $2$ such that $e_vxe_v=x$.  Then $x\in I$ if and only if
$f(\bar x)=0$.
\end{lemma}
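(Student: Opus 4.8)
The plan is to reduce the entire statement to the single fact that the symmetric form $f$ is injective on $\soc(A)$. This injectivity is precisely the translation of the standing hypothesis that $\ker(f)$ contains no nonzero two-sided ideal; as noted above, that condition is equivalent to $\ker(f)\cap\soc(A)=\{0\}$. So once I know $\bar x$ lives in the socle, the lemma follows at once.

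The forward implication is immediate: if $x\in I$ then $\bar x=0$, hence $f(\bar x)=0$. For the converse I would first locate $\bar x$ inside the socle. Since $x$ is a $K$-linear combination of paths of length $2$, its image $\bar x$ lies in $\rad^2(A)$. Because $\rad^3(A)=0$, both $\rad^2(A)\rad(A)$ and $\rad(A)\rad^2(A)$ vanish, so $\rad^2(A)$ is contained in the right socle and in the left socle of $A$; as $A$ is symmetric these coincide with the two-sided socle $\soc(A)$. The hypothesis $e_vxe_v=x$ refines this to $\bar x\in e_v\soc(A)e_v$. In either reading, $\bar x\in\soc(A)$.

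Now suppose $f(\bar x)=0$. Then $\bar x\in\ker(f)\cap\soc(A)=\{0\}$, hence $\bar x=0$, that is $x\in I$, completing the equivalence. Equivalently, one may phrase the last step through the one-dimensionality of $e_v\soc(A)e_v$ together with Lemma~\ref{lem-q}: $f$ restricted to this line is an isomorphism onto $K$, so $f(\bar x)=0$ forces $\bar x=0$.

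I do not anticipate a genuine obstacle here; the only step requiring care is the verification that length-$2$ paths map into $\soc(A)$, which is exactly where the assumption $\rad^3(A)=0$ enters. This is also what makes the lemma specific to the radical-cube-zero setting, since without it $\bar x$ need not be in the socle and the form-theoretic argument would break down.
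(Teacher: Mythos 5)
Your proof is correct and follows essentially the same route as the paper: both arguments observe that $\bar x$ lies in $\soc(A)$ because $\rad^3(A)=0$ forces $\rad^2(A)$ into the (two-sided) socle, and then conclude from the fact that $\ker(f)$ meets $\soc(A)$ trivially — which the paper re-derives inside its proof by noting that the $K$-span of $\bar x$ would be a two-sided ideal in $\ker(f)$. Your version simply spells out the inclusion $\rad^2(A)\subseteq\soc(A)$ in more detail, which the paper takes for granted.
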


\begin{proof}
Suppose $x\not\in I$.  Then  $ \bar x$ is a nonzero
element of the socle of $A$.  If $f( \bar x)=0$ then the $K$-span
of $\bar x$ is in $\ker(f)$.  Since $\soc(A)$ is semisimple and each
simple $A$-module is one dimensional, we obtain a two sided
ideal in $\ker(f)$ which is a contradiction.  Hence $f(\bar x)\ne 0$.
Next suppose that $x\in I$.   Then $\bar x=0$ and we
conclude that $f(\bar x)=0$.
\end{proof}

Our next lemma shows the special nature of 
symmetric algebras  with radical cube zero.

\begin{lemma}\label{lem-1side}  Keeping the notations and assumptions
above, 
let $a$ and $b$ be arrows in $\cQ$.  
 Then the following statements
are equivalent:
\begin{enumerate}
\item $ab\not\in I$.
\item $ba\not\in I$.
\item $\ov{ab}$ is a nonzero element of $\soc(A)$.
\item $\ov{ba}$ is a nonzero element of $\soc(A)$.
\item $f(\ov{ab})\ne 0$.
\item $f(\ov{ba})\ne 0$.

\end{enumerate}
\end{lemma}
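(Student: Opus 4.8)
The plan is to route all six conditions through the symmetric linear form $f$, exploiting two features of the present setting: first, that $\rad^3(A)=0$ forces the image of any length-two path to lie in the socle as soon as it is nonzero, and second, that $\ker(f)$ contains no nonzero two-sided ideal. I would arrange the argument so that the only implication carrying real content is ``$ab\notin I$ implies $f(\ov{ab})\ne 0$''; everything else is either formal or immediate from $\rad^3(A)=0$.

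First I would clear away the cheap equivalences. Since $I=\ker(\pi)$, we have $ab\notin I$ if and only if $\ov{ab}\ne 0$; and whenever $\ov{ab}\ne 0$, both $\ov{ab}\,\rad(A)$ and $\rad(A)\,\ov{ab}$ vanish, because any product of $ab$ with an arrow is a path of length three, hence lies in $I$ as $\rad^3(A)=0$. Thus a nonzero $\ov{ab}$ automatically lies in the (two-sided) socle, which gives the equivalence of (1) and (3), and symmetrically of (2) and (4). The equivalence of (5) and (6) is immediate from the symmetry of $f$, since $f(\ov{ab})=f(\ov{a}\,\ov{b})=f(\ov{b}\,\ov{a})=f(\ov{ba})$. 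Finally, (5) $\Rightarrow$ (1) and (6) $\Rightarrow$ (2) are trivial, as $f(\ov{ab})\ne 0$ forces $\ov{ab}\ne 0$.

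The core step is (1) $\Rightarrow$ (5). Assume $ab\notin I$, so that $\ov{ab}$ is a nonzero socle element, annihilated by $\rad(A)$ on both sides by the previous paragraph. Writing any $x\in A$ as an idempotent part plus a radical part, one checks $x\,\ov{ab}\in K\ov{ab}$ and $\ov{ab}\,x\in K\ov{ab}$, since the radical part kills $\ov{ab}$ and the idempotent part only rescales it. Hence $A\ov{ab}A=K\ov{ab}$ is a nonzero two-sided ideal, and because $\ker(f)$ contains no nonzero two-sided ideal we conclude $f(\ov{ab})\ne 0$, which is (5). The same argument applied to $ba$ yields (2) $\Rightarrow$ (6). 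Combining these with the equivalences above shows $(1)\Leftrightarrow(5)\Leftrightarrow(6)\Leftrightarrow(2)$ and $(1)\Leftrightarrow(3)$, $(2)\Leftrightarrow(4)$, so all six statements are equivalent. (Alternatively, one can note that the two-sided ideal argument also forces $a$ and $b$ to begin and end at the same vertex $v$, so that $ab$ is a cycle with $e_v(ab)e_v=ab$; then the equivalence of (1) and (5) drops out of Lemma~\ref{lem-kerq} applied to $x=ab$.)

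The main obstacle, such as it is, lies entirely in the core step: extracting $f(\ov{ab})\ne 0$ from $ab\notin I$. The key realization is that non-degeneracy of $f$ should be used not pointwise but through the ``no nonzero two-sided ideal in $\ker(f)$'' formulation, and that the verification that the one-dimensional space $K\ov{ab}$ is a genuine two-sided ideal is exactly where $\rad^3(A)=0$ enters, since it is what guarantees that $\ov{ab}$ is annihilated by $\rad(A)$ on both sides. Once that is in place, the remaining bookkeeping is routine.
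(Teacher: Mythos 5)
Your proof is correct and follows essentially the same route as the paper: the equivalences (1)$\Leftrightarrow$(3) and (2)$\Leftrightarrow$(4) come from $\rad^3(A)=0$, (5)$\Leftrightarrow$(6) from symmetry of $f$, and the core step (1)$\Rightarrow$(5) is exactly the content of the paper's Lemma~\ref{lem-kerq} (that a nonzero one-dimensional socle element killed by $f$ would span a two-sided ideal in $\ker(f)$), which you simply re-derive inline and also correctly identify as the alternative citation.
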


\begin{proof} Note that since $A$ is  symmetric, if
$a$ and $b$ are arrows with $\ov{ab}\in\soc(A)$ and $\overline{ab} \neq 0$, then
there is a vertex  $v$ such that  $e_vabe_v=ab$ where $e_v$ is the corresponding idempotent in $K\cQ$. 
Using Lemma \ref{lem-kerq} and that  $f(\ov{ab})=f(\ov{ba})$,  it is clear that parts (1), (2), (5), and (6) are equivalent. Using that    
$\ker(f)$ cannot  contain any non-zero two-sided ideals and that $\rad^3(A)=0$,
we obtain that  part (3) is equivalent 
to part (1) and part (4) is equivalent part (2).
\end{proof}

The next result shows that  in general, for   a  basic indecomposable symmetric $K$-algebra $A$ such that
$\rad^3(A)=0$ but $\rad^2(A)\ne 0$  there is a special way of presenting $A$ as $K\cQ/I$.
For this we define a set $Arr$ whose $K$-span equals the $K$-span of  the image of the arrows in $\cQ$ and such that $Arr$ satisfies a tight set of multiplicative properties. 

\begin{prop}\label{prop-spec}Let $K$ be an algebraically closed
field and let $A$ be a basic  indecomposable symmetric $K$-algebra such that
$\rad^3(A)=0$ but $\rad^2(A)\ne 0$.

Then there is a $K$-linearly independent set $Arr\subset \rad(A)$  with the following 
properties.
\begin{enumerate}
\item $Arr$ generates  $\rad(A)$ as a two-sided ideal. 
\item If $\bar x$ is a nonzero linear combination of elements in  $Arr$ then
$\bar x\not\in \rad^2(A)$.
\item If $\bar a\in Arr$ is such that $ \bar a^2\ne 0$, then $\ov{ab}=0$ for all
$\bar b\in Arr$, $\bar b\ne \bar a$.
\item If $\bar a,\bar b\in Arr$ with $\bar a\ne \bar b$ and $\ov{ab}\ne 0$, then $\ov{ac}=0=\ov{bc}$ for
all $\bar c\in Arr$ with $\bar c\ne \bar a,\bar b$.
\item For each $\bar a,\bar b\in Arr$, not necessarily distinct, if $\ov{ab}\ne 0$ then
$f(\ov{ab})=1$.
\end{enumerate}
\end{prop}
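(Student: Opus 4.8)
The plan is to reduce the entire construction to a normal-form problem for a single symmetric bilinear form, and the bulk of the work will be showing that this form is non-degenerate. Set $V=\rad(A)/\rad^2(A)$ and define $B\colon V\times V\to K$ by $B(\bar x,\bar y)=f(\bar x\bar y)$. This is well defined because $\rad^3(A)=0$ forces $\bar x\bar y\in\rad^2(A)=\soc(A)$ and kills any $\rad^2(A)$-contribution, and it is symmetric since $f(\bar x\bar y)=f(\bar y\bar x)$. Because $A$ is basic and symmetric, I would first record two facts: $f$ vanishes on $e_vAe_w$ for $v\ne w$ (indeed $f(e_vze_w)=f(ze_we_v)=0$), and $f$ restricts to an isomorphism $e_v\soc(A)e_v\cong K$ (since $e_v\soc(A)e_v$ is a one-dimensional two-sided ideal, hence not inside $\ker f$). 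Combined with Lemma~\ref{lem-kerq}, this gives the key dictionary that for a product of radical elements landing in a socle summand $e_v\soc(A)e_v$, vanishing of the product is equivalent to vanishing of $f$ on it.

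The hard part will be to show that $B$ is non-degenerate. Suppose $\bar a\in V$ is nonzero with $B(\bar a,-)=0$, and choose a uniform representative $a=e_pae_q\in\rad(A)\setminus\rad^2(A)$. For any $x,y\in A$ one has $f(xay)=f(a\,yx)$; writing $yx$ as an idempotent part plus a radical part and using $B(\bar a,-)=0$ to discard the radical contribution shows $f(AaA)\subseteq Kf(a)$. Since $\ker(f)$ contains no nonzero two-sided ideal and $AaA\ne 0$, this forces $f(a)\ne 0$, whence $a=e_vae_v$ is a loop at some vertex $v$. Now $B(\bar a,-)=0$ gives $f(a\,\rad(A))=0$, and the socle dictionary yields $a\,\rad(A)=0$; symmetry of $B$ gives $\rad(A)\,a=0$ as well. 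Hence $AaA=Ka$ is a one-dimensional two-sided ideal annihilated by $\rad(A)$ on both sides, so $Ka\subseteq\soc(A)=\rad^2(A)$, contradicting $a\notin\rad^2(A)$. Thus $B$ is non-degenerate.

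Next I would exploit the vertex grading $V=\bigoplus_{v,w}e_vVe_w$. Since a product $\bar x\bar y$ can be a nonzero socle element only when it lands in some $e_v\soc(A)e_v$, the form $B$ is orthogonal with respect to this grading except that it pairs $e_vVe_w$ with $e_wVe_v$; in particular each loop space $V_v=e_vVe_v$ is $B$-orthogonal to everything else, while each $e_vVe_w$ with $v\ne w$ is totally isotropic and paired with $e_wVe_v$. Non-degeneracy of $B$ then forces $B$ to be non-degenerate on each $V_v$ and to induce a perfect pairing between $e_vVe_w$ and $e_wVe_v$. On each $V_v$ I would put the non-degenerate symmetric form $B|_{V_v}$ into an orthogonal normal form consisting of one-dimensional blocks and hyperbolic planes: over the algebraically closed field $K$ the one-dimensional blocks can be rescaled to value $1$ using square roots, and the hyperbolic planes (the only blocks occurring in the alternating, characteristic-two case) already have Gram matrix $\left(\begin{smallmatrix}0&1\\1&0\end{smallmatrix}\right)$. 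For each pair $v\ne w$ I would choose dual bases of $e_vVe_w$ and $e_wVe_v$ pairing to the identity. All these basis vectors are uniform by construction.

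Finally, lifting the union of all these bases arbitrarily to $\rad(A)$ produces the set $Arr$. Properties (1) and (2) are then immediate, since the images of $Arr$ form a basis of $V$ and Nakayama's lemma gives (1). Properties (3) and (4) read off the block structure through the socle dictionary, under which $B$-orthogonality is exactly the vanishing of the relevant products: an element with $\bar a^2\ne 0$ comes from a one-dimensional block and is $B$-orthogonal to every other basis vector, giving $\ov{ab}=0$ for all $\bar b\ne\bar a$; a pair $\bar a\ne\bar b$ with $\ov{ab}\ne 0$ comes either from a hyperbolic plane in some $V_v$ or from a dual vertex-pair, and in both cases $\bar a$ and $\bar b$ are $B$-orthogonal to all remaining basis vectors. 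Property (5) is precisely the normalization of the nonzero $B$-values to $1$. I expect the genuine obstacles to be the non-degeneracy argument above together with the characteristic-two subtlety in the normal form for $B|_{V_v}$: in characteristic two a symmetric bilinear form need not be diagonalizable, so one must argue via the semilinear map $x\mapsto B(x,x)$ that the form splits into one-dimensional and hyperbolic blocks, and it is exactly this splitting that produces both the $\bar a^2\ne 0$ loops of (3) and the $\ov{ab}\ne 0$, $\bar a\ne\bar b$ pairs of (4).
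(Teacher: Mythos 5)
Your proof is correct, and underneath it is the same argument as the paper's: both put the symmetric bilinear form $(\bar x,\bar y)\mapsto f(\bar x\bar y)$ on a complement of $\rad^2(A)$ in $\rad(A)$ into an orthogonal sum of one-dimensional blocks (the $\bar a$ with $\bar a^2\ne 0$) and hyperbolic planes (the pairs in (4)), with (5) as the normalization. The execution, however, is genuinely different. The paper never introduces the form on $\rad(A)/\rad^2(A)$ explicitly: it starts from $Arr=\pi(\cQ_1)$ and runs an explicit Gram--Schmidt, first replacing $\bar b$ by $\bar b-\frac{\gamma_{a,b}}{\gamma_{a,a}}\bar a$ for each anisotropic $\bar a$, then, for isotropic $\bar a$, producing a partner $\bar b$ with $\ov{ab}\ne 0$ whose existence comes from Lemma~\ref{lem-ll} (Loewy length $3$, equivalently $\soc(A)=\rad^2(A)$, so $\bar a\rad(A)\ne 0$), and finally rescaling by square roots. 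You instead isolate non-degeneracy of $B$ on $V=\rad(A)/\rad^2(A)$ as the key lemma---your annihilator argument for it is correct and is essentially the computation the paper distributes between Lemmas~\ref{lem-ll}, \ref{lem-kerq} and \ref{lem-1side}---then split $V$ along the vertex grading and quote the normal-form theorem for non-degenerate symmetric bilinear forms over an algebraically closed field. Your version buys conceptual clarity: properties (3)--(5) become a transcription of a Gram matrix, and the characteristic-two subtlety is confronted explicitly rather than silently avoided by the order of operations. The paper's version buys self-containedness: the explicit change-of-basis formulas work uniformly in all characteristics without appeal to the classification of bilinear forms.

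Two small points you should make explicit. First, the identification $\soc(A)=\rad^2(A)$, which you use both in the ``socle dictionary'' and in the final contradiction $Ka\subseteq\rad^2(A)$, is not automatic from $\rad^3(A)=0$ (only $\rad^2(A)\subseteq\soc(A)$ is); it requires indecomposability of $A$ via Lemma~\ref{lem-ll}. Second, when you pass from $B(\bar a,-)=0$ to a uniform representative, you should note that each component $e_pae_q$ of $a$ again lies in the radical of $B$ (which follows from $f(e_pae_qy)=f(a\cdot e_qye_p)$), so that the reduction to a uniform $a$ is legitimate. With these made explicit the argument is complete.
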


\begin{proof}The assumption that the simple  $A$-modules are one
dimensional implies there is a surjection $\pi\colon K\cQ\to A$ such that
$\cQ$ is the quiver of $A$ and that $\ker(\pi) $  is an admissible ideal. 
Let $f \colon A\to K$ be a linear form obtained from $A$ being symmetic.

Let $Arr=\pi(\cQ_1)$.  It follows that $Arr$  is linearly independent over $K$ and generates $\rad(A)$.  Moreover $Arr$ satisfies property (2).
If $\bar a$ and $\bar b$ are in $Arr$, then we set $\gamma_{a,b}=f(\ov{ab})$.

Let $Y=\text{Span}_K(Arr)$. Note that $Arr$ is a $K$-basis of $Y$.
We begin by  making a series of  linear changes of bases starting with
the basis $Arr$ of $Y$.
Suppose that there is an element $\bar a\in Arr$ such that $\bar a^2\ne 0$.
Then consider the change of basis with $\bar a$ remaining unchanged and
if $\bar b\in Arr$ with $\bar b\ne \bar a$, replace $\bar b$ by $\bar b-\frac{\gamma_{a,b}}{\gamma_{a,a}} \bar a$.
Note that after this change of basis, if $\bar b\ne \bar a$, 
$f(\bar a (\bar b-\frac{\gamma_{a,b}}{\gamma_{a,a}}\bar a))=0$ and hence, in the
new basis, $\ov{ab}=0$ by Lemma \ref{lem-1side}.  By abuse of notation,
we still call the new basis $Arr$.

If there is another $\bar b\in Arr$ such that  $\bar b^2\ne 0$, perform the same change
of basis for $\bar b$ instead of $\bar a$.  
Note that under this change of basis,  $\bar a$ remains  unchanged since
$\gamma_{a,b}=0$.
 Continuing in this fashion, we arrive at a basis, again called $Arr$, such that 
if $\bar a$ is an element in  $Arr$ and $\bar a^2\ne 0$, then, for all $\bar b\ne \bar a$,  we have
$\ov{ba}=0$ and $\ov{ab}=0$.

Now let $\bar a$ be an element in $Arr$ with $ \bar a^2=0$.  Then   by Lemma \ref{lem-ll} there  must be
an element $\bar b$ in $Arr$, $\bar b\ne \bar a$, such that $\ov{ab}\ne 0$.  Note that
$\bar b^2=0$, since if not, $\ov{ab}$ would equal 0.   Consider the change of
basis that leaves $\bar a$ and $\bar b$ unchanged, and where if $\bar c$ is an element in $Arr$ different
from $\bar a$ and $\bar b$,  we replace $\bar c$ by $\bar c-\frac{\gamma_{b,c}}{\gamma_{a,b}}\bar a
-\frac{\gamma_{a,c}}{\gamma_{a,b}}\bar b$.   Applying $f$ to the new basis,
we see that $\ov{ab}\ne 0$, $\ov{ac}=0=\ov{bc}$ for all $\bar c$ different from
$\bar a$ and $\bar b$.   Note that if $\bar c$ is an element of $Arr$ with $\bar c^2=0$,  then
$\bar c$ remains unchanged since $\gamma_{a,c}=0=\gamma_{b,c}$.
Continuing in this fashion, we obtain a new basis of $Y$,  which we call again $Arr$ satisfying  properties (3) and (4).

For each pair $\bar a,\bar b$ satisfying (4) above,  
choose either $\bar a$ or $\bar b$ and call it
a \emph{chosen element}.
We make one final change of basis of $Y$.
For each $\bar a\in Arr$ such that $\bar a^2\ne 0$,
replace $\bar a$ by $(1/(\gamma_{a,a})^{\frac{1}{2}})\bar a$.
For each   pair $\bar a,\bar b$  satisfying (4) above,
replace the chosen element, say  $\bar a $, by 
$(1/\gamma_{a,b})\bar a$ and leave $\bar b$ unchanged.
  We then  obtain a basis of $Y$, which   we call again $Arr$, satisfying properties (1)-(5)
and we  take this to be the desired set. 
\end{proof}

 We remark that  it follows from  the proof of Proposition 5.5 that the canonical surjection $\pi : K\cQ \to A$ maps the arrows of $Q$ bijectively to $Arr$. 

We now present the final result needed to finish the proof of Theorem \ref{thm-r3equiv}.

\begin{thm}\label{thm-r30}
Let $K$ be an algebraically closed field 
and let $A=K\cQ/I$ be a  finite dimensional  basic indecomposable $K$-algebra. Suppose that $A$ is symmetric and  that
$\rad^3(A)=0$ but $\rad^2(A)\ne 0$.  
Then $A$ is isomorphic to a \bca.
\end{thm}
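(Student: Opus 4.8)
The plan is to show that $A$, in a suitable presentation, satisfies condition (M), so that $A$ is a symmetric special multiserial algebra, and then to invoke Theorem~\ref{thm-sma} to conclude that $A$ is a Brauer configuration algebra. The engine of the argument is Proposition~\ref{prop-spec}: since $K$ is algebraically closed and $A$ is basic indecomposable symmetric with $\rad^3(A)=0$ and $\rad^2(A)\neq 0$, it supplies a $K$-linearly independent set $Arr\subset\rad(A)$ that generates $\rad(A)$ as a two-sided ideal and satisfies the multiplicative properties (1)--(5). By the remark following that proposition, there is a presentation $A\cong K\cQ/I$ with $I$ admissible in which the canonical surjection $\pi\colon K\cQ\to A$ carries the arrows of $\cQ$ bijectively onto $Arr$; I would fix this presentation and identify each arrow $a$ with the corresponding element $\bar a\in Arr$.

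The core step is to verify condition (M) in this presentation, which I claim reduces to the single statement that for each $\bar a\in Arr$ there is at most one $\bar b\in Arr$ with $\overline{ab}\neq 0$ (equivalently $ab\notin I$). I would argue by cases on $\bar a^2$. If $\bar a^2\neq 0$, then property (3) of Proposition~\ref{prop-spec} forces $\overline{ab}=0$ for every $\bar b\neq\bar a$, so $\bar a$ itself is the only candidate. If $\bar a^2=0$ and there were two distinct elements $\bar b,\bar b'$, both necessarily different from $\bar a$, with $\overline{ab}\neq 0$ and $\overline{ab'}\neq 0$, then applying property (4) to the pair $\bar a,\bar b$ would give $\overline{ab'}=0$ (as $\bar b'\neq\bar a,\bar b$), a contradiction; hence at most one such $\bar b$ exists. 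This establishes the \emph{right} half of (M).

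For the \emph{left} half, that there is at most one arrow $c$ with $ca\notin I$, I would avoid a separate computation by invoking Lemma~\ref{lem-1side}, whose equivalence of (1) and (2) yields $\overline{ca}\neq 0\iff\overline{ac}\neq 0$. Thus the set of $\bar c\in Arr$ with $\overline{ca}\neq 0$ coincides with the set of $\bar c$ with $\overline{ac}\neq 0$, which has at most one element by the right-hand statement just proved. With condition (M) verified and $A$ basic, the algebra $A=K\cQ/I$ is special multiserial, and it is symmetric by hypothesis; since $K$ is algebraically closed, $I$ is admissible, and $\rad^2(A)\neq 0$, Theorem~\ref{thm-sma} applies and yields that $A$ is a Brauer configuration algebra, completing the proof.

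The main obstacle I anticipate is not in this final deduction but in making sure that properties (3) and (4) of Proposition~\ref{prop-spec}, which are phrased as relations among the distinguished basis $Arr$ of a complement to $\rad^2(A)$, translate faithfully into the combinatorial condition (M) on the arrows of $\cQ$. The two delicate points are the legitimacy of working in the presentation whose arrows are exactly the elements of $Arr$ (which is exactly what the remark after Proposition~\ref{prop-spec} provides) and the symmetry reduction of the left-hand half of (M) to the right-hand half via Lemma~\ref{lem-1side}; once these are in place, the verification of (M) is the short case analysis described above.
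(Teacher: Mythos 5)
Your proposal is correct and follows essentially the same route as the paper's own proof: the same case split on $\bar a^2\neq 0$ versus $\bar a^2=0$ using properties (3) and (4) of Proposition~\ref{prop-spec} to get the right half of (M), the same appeal to Lemma~\ref{lem-1side} to transfer it to the left half, and the same final application of Theorem~\ref{thm-sma}. No gaps.
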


\begin{proof}
 Let $Arr$ be a set satisfying properties (1)-(5) in Proposition~\ref{prop-spec}.  Recall that if $x\in K\cQ$ then $\bar x$ will denote the
image of $x$ in $A$ under the canonical surjection $K\cQ \to A$
where the arrows of $\cQ$ are mapped bijectively to $Arr$. We show that property (M) holds. 
Let $a$ be an arrow in $\cQ$  and $\bar a$ its image in $Arr$.    First we show that there is at most one arrow $b$ such that
 $ab\notin I$.   If $a^2\notin I$, that is if $\bar{a}^2 \neq 0$, then Proposition \ref{prop-spec}(3) yields the result.
If $a^2\in I$, that is, if  $\ov{a}^2=0$, then Proposition \ref{prop-spec}(4) shows that
if $ab\notin I$ for some arrow $b$, 
then $ad\in I$ for all $d\ne a$ or $b$.  But  $a^2\in I$ and hence there is at
most one arrow  $b$ such that  $ab\notin I$.

Given an arrow $a$ of $\cQ$, by Lemma~\ref{lem-1side} we have that $ab \notin I$ for some arrow $b$ if and only if $ba \notin I$. By the first part of the proof above $b$ is unique if it exists. Therefore, it follows directly that if $ab \notin I$ then $ba \notin I$ and there is no other arrow $c$ with
$c \neq b$ such that $ca \notin I$.  Note that this also holds if $b$ is equal to $a$.  

Thus $(M)$ holds and hence $A$ is a symmetric special multiserial algebra. The result then follows from Theorem \ref{thm-sma}.

\end{proof}

\bibliographystyle{plain}

\begin{thebibliography}{99}
\bibitem[AAC]{AAC} Takahide Adachi, Takuma Aihara, Aaron Chan. Tilting Brauer graph algebras I: Classification of two-term tilting complexes, preprint, arXiv:1504.04827.
\bibitem[AIR]{AIR} Adachi, Takahide; Iyama, Osamu; Reiten, Idun. $\tau$-tilting theory. Compos. Math. 150 (2014), no. 3, 415--452. 
\bibitem[AI]{AI} Aihara, Takuma; Iyama, Osamu. Silting mutation in triangulated categories. J. Lond. Math. Soc. (2) 85 (2012), no. 3, 633--668.
\bibitem[A]{A} Asashiba, Hideto.
The derived equivalence classification of representation-finite selfinjective algebras. 
J. Algebra  214  (1999),  no. 1, 182--221. 
\bibitem[ABCP]{ABCP} Assem, Ibrahim; Br\"ustle, Thomas; Charbonneau-Jodoin, Gabrielle; Plamondon, Pierre-Guy.
Gentle algebras arising from surface triangulations.
Algebra Number Theory 4 (2010), no. 2, 201--229. 
\bibitem[ABS]{ABS} Assem, I.; Br\"ustle, T.; Schiffler, R. Cluster-tilted algebras as trivial extensions. Bull. Lond. Math. Soc.  40  (2008),  no. 1, 151--162.
\bibitem[ASS]{ASS} Assem, Ibrahim; Simson, Daniel; Skowro\'nski, Andrzej. Elements of the representation theory of associative algebras. Vol. 1. Techniques of representation theory. London Mathematical Society Student Texts, 65. Cambridge University Press, Cambridge, 2006.
\bibitem[ARS]{ARS} Auslander, Maurice; Reiten, Idun; Smal\o, Sverre O. Representation theory of Artin algebras.  Cambridge Studies in Advanced Mathematics, 36. Cambridge University Press, Cambridge, 1997.
\bibitem[AIS]{AIS} Avramov, Luchezar L.; Iyengar, Srikanth B.; Sega, Liana M. Free resolutions over short local rings. J. Lond. Math. Soc. (2) 78 (2008), no. 2, 459--476.
\bibitem[BGRS]{BGRS} Bautista, R.; Gabriel, P.; Roiter, A. V.; Salmer\~on, L.
Representation-finite algebras and multiplicative bases. 
Invent. Math.  81  (1985),  no. 2, 217--285. 
\bibitem[Be]{Be} Benson, David J. Resolutions over symmetric algebras with radical cube zero. J. Algebra  320  (2008),  no. 1, 48--56.
\bibitem[BD]{BD} Bondarenko, V. M.; Drozd, Ju. A. The representation type of finite groups. 
Modules and representations. 
Zap. Naucn. Sem. Leningrad. Otdel. Mat. Inst. Steklov. (LOMI)  71  (1977), 24--41, 282.
\bibitem[Bo]{Bo} Bongartz, Klaus.
A criterion for finite representation type. 
Math. Ann.  269  (1984),  no. 1, 1--12. 
\bibitem[BG]{BG}  Bongartz, K.; Gabriel, P. Covering spaces in representation-theory. Invent. Math.  65  (1981/82), no. 3, 331--378.
\bibitem[Br]{Br} Brenner, Sheila. Modular representations of p  groups. J. Algebra  15  1970 89--102.
\bibitem[BB]{BB} Brenner, Sheila; Butler, M. C. R.
Generalizations of the Bernstein-Gelfand-Ponomarev reflection functors.  Representation theory, II (Proc. Second Internat. Conf., Carleton Univ., Ottawa, Ont., 1979),  pp. 103--169, 
Lecture Notes in Math., 832, Springer, Berlin-New York, 1980. 
 \bibitem[BM]{BM} Boldt, Axel; Mojiri, Ahmad On uniserial modules in the Auslander-Reiten quiver. J. Algebra 319 (2008), no. 5, 1825--1850.
\bibitem[BMR]{BMR} Buan, Aslak Bakke; Marsh, Robert J.; Reiten, Idun. Cluster-tilted algebras. Trans. Amer. Math. Soc.  359  (2007),  no. 1, 323--332.
\bibitem[BR]{BR} Butler, M. C. R.; Ringel, Claus Michael. Auslander-Reiten sequences with few middle terms and applications to string algebras. 
Comm. Algebra 15 (1987), no. 1-2, 145--179. 
\bibitem[CCS]{CCS} Caldero, P.; Chapoton, F.; Schiffler, R. Quivers with relations arising from clusters ($A_n$   case). Trans. Amer. Math. Soc.  358  (2006),  no. 3, 1347--1364.
\bibitem[CC]{CC} Carroll, Andrew T.; Chindris, Calin. On the invariant theory for acyclic gentle algebras. Trans. Amer. Math. Soc. 367 (2015), no. 5, 3481--3508.
\bibitem[CB]{CB} Crawley-Boevey, W. W.
Maps between representations of zero-relation algebras. 
J. Algebra 126 (1989), no. 2, 259--63. 
\bibitem[E]{E} Erdmann, Karin. Blocks of tame representation type and related algebras. Lecture Notes in Mathematics, 1428. Springer-Verlag, Berlin, 1990. 
\bibitem[ESk]{ESk} Erdmann, Karin; Skowro\'nski, Andrzej. On Auslander-Reiten components of blocks and self-injective biserial algebras. Trans. Amer. Math. Soc. 330 (1992), no. 1, 165--189.
\bibitem[ES1]{ES1}  Erdmann, Karin; Solberg, \O yvind. Radical cube zero weakly symmetric algebras and support varieties. J. Pure Appl. Algebra 215 (2011), no. 2, 185--200.
\bibitem[ES2]{ES2} Erdmann, Karin; Solberg, \O yvind. Radical cube zero selfinjective algebras of finite complexity. J. Pure Appl. Algebra 215 (2011), no. 7, 1747--1768.
\bibitem[G]{G} Gabriel, P. Christine Riedtmann and the self-injective algebras of finite representation type.  Ring theory, pp. 453--458, Lecture Notes in Pure and Appl. Math., 51, Dekker, New York, 1979.
\bibitem[GR]{GR} Gabriel, P.; Riedtmann, Ch. Group representations without groups. Comment. Math. Helv.  54  (1979), no. 2, 240--287.
\bibitem[GP]{GP} Gelfand, I. M.; Ponomarev, V. A.
Indecomposable representations of the Lorentz group. 
Uspehi Mat. Nauk 23 1968 no. 2 (140), 3--60. 
\bibitem[GS]{GS} Green. Edward L.;  Schroll, Sibylle. Brauer configuration algebras, preprint,  arXiv:1508.03617.
\bibitem[HR]{HR} Happel, Dieter; Ringel, Claus Michael Tilted algebras. Trans. Amer. Math. Soc.  274  (1982), no. 2, 399--443.
\bibitem[Hi]{Hi} Higman, D. G. Indecomposable representations at characteristic $p$. Duke Math. J.  21,  (1954). 377--381. 
\bibitem[H]{H} Hungerford, Thomas W. Algebra. Reprint of the 1974 original. Graduate Texts in Mathematics, 73. Springer-Verlag, New York-Berlin, 1980.
\bibitem[J]{J} B. Jue, The uniserial geometry and homology of finite dimensional algebras, Ph.D. Thesis, University of California in Santa Barbara, August 1999.
\bibitem[Ka]{Ka} Kalck, Martin Singularity categories of gentle algebras. Bull. Lond. Math. Soc. 47 (2015), no. 1, 65--74.
\bibitem[KY]{KY} Kirichenko, V. V.; Yaremenko, Yu. V. Multiserial rings. Ukraïn. Mat. Zh. 48 (1996), no. 9, 1223--1235; translation in 
Ukrainian Math. J. 48 (1996), no. 9, 1388--1401 (1997) 
\bibitem[K]{K}  Krause, Henning. Maps between tree and band modules. J. Algebra 137 (1991), no. 1, 186--194. 
\bibitem[MS]{MS} Marsh, Robert J.; Schroll, Sibylle. The geometry of Brauer graph algebras and cluster mutations. J. Algebra  419  (2014), 141--166.
\bibitem[M]{M} A. Moriji, Geometric aspects of finite dimensional algebras - uniserial representations, Ph.D. Thesis, University of Ottawa, June 2003. 
\bibitem[P]{P}  Priddy, Stewart B. Koszul resolutions. Trans. Amer. Math. Soc.  152  1970 39--60.
\bibitem[R]{R} Riedtmann, C. Algebren, Darstellungsk\"ocher, \"Uberlagerungen und zur\"uck.  Comment. Math. Helv.  55  (1980), no. 2, 199--224.
\bibitem[Ri]{Ri} Ringel, Claus Michael. The indecomposable representations of the dihedral 2-groups.  Math. Ann. 214 (1975), 19--34. 
\bibitem[Ro]{Roggenkamp} Roggenkamp, K. W.
Biserial algebras and graphs. (English summary)  Algebras and modules, II (Geiranger, 1996),  481--496, 
CMS Conf. Proc., 24, Amer. Math. Soc., Providence, RI, 1998. 
\bibitem[S]{S} Schroll, Sibylle. Trivial extensions of gentle algebras and Brauer graph algebras, J. Algebra, Vol 444, 2015,  183--200.
\bibitem[SS1]{SS1} Simson, Daniel; Skowro\'nski, Andrzej. Elements of the representation theory of associative algebras. Vol. 2. Tubes and concealed algebras of Euclidean type. London Mathematical Society Student Texts, 71. Cambridge University Press, Cambridge, 2007.
\bibitem[SS2]{SS2}  Simson, Daniel; Skowro\'nski, Andrzej. Elements of the representation theory of associative algebras. Vol. 3. Representation-infinite tilted algebras. London Mathematical Society Student Texts, 72. Cambridge University Press, Cambridge, 2007.
\bibitem[Sk]{Sk} Skowro\'nski, Andrzej. Selfinjective algebras: finite and tame type.  Trends in representation theory of algebras and related topics,  169--238, Contemp. Math., 406, Amer. Math. Soc., Providence, RI, 2006. 
\bibitem[SW]{SW}  Skowro\'nski, Andrzej;  Waschb\"usch, Josef.  Representation-finite biserial algebras. Journal f\"ur Reine und Angewandte Mathematik 345, 1983, pp. 172--181. 
\bibitem[VHW]{VHW} Von H\"ohne,  Hans-Joachim;   Waschb\"usch, Josef.  Die struktur n-reihiger Algebren.  Comm. Algebra 12 (1984), no. 9-10, 1187--1206.
\bibitem[WW]{WW} Wald, Burkhard; Waschb\"usch, Josef. Tame biserial algebras. 
J. Algebra 95 (1985), no. 2, 480--500. 
\bibitem[Z1]{Z1} Zvonareva, Alexandra. On the derived Picard group of the Brauer star algebra, J. Algebra  443  (2015), 270--299.
\bibitem[Z2]{Z2} Zvonareva, Alexandra. Two-term tilting complexes over Brauer tree algebras, preprint, 	arXiv:1311.7061.
\end{thebibliography}

\end{document}